\newcommand{\ep}{\epsilon}
\newcommand{\leqc}{\lesssim}
\newcommand{\geqc}{\gtrsim}
\newcommand{\grad}{\nabla}
\newcommand{\norm}[1]{\left|\left| #1 \right|\right|}
\newcommand{\abs}[1]{\left| #1 \right|}
\newcommand{\set}[1]{\left\{ #1 \right\}}
\newcommand{\brak}[1]{\left\langle #1 \right\rangle} 
\newcommand{\R}{\mathbb{R}}
\newcommand{\N}{\mathbb{N}}
\newcommand{\C}{\mathbb{C}}
\newcommand{\Z}{\mathbb{Z}}
\newcommand{\T}{\mathbb{T}}
\newcommand{\K}{\mathbb{K}}
\renewcommand{\S}{\mathbb{S}}
\newcommand{\loc}{\mathrm{loc}}
\newcommand{\Hbf}{{\bf H}}
\newcommand{\Wbf}{{\bf W}}
\newcommand{\dee}{\mathrm{d}}
\newcommand{\ds}{\dee s}
\newcommand{\dt}{\dee t}
\newcommand{\dx}{\dee x}
\newcommand{\dy}{\dee y}
\newcommand{\dr}{\dee r}
\DeclareMathOperator{\Div}{\mathrm{div}}
\DeclareMathOperator{\Id}{\mathrm{Id}}
\renewcommand{\P}{\mathbf{P}}
\newcommand{\E}{\mathbf{E}}
\newcommand{\EE}{\mathbf E}
\newcommand{\PP}{\mathbf P}
\newtheorem{theorem}{Theorem}[section]
\newtheorem{proposition}[theorem]{Proposition}
\newtheorem{corollary}[theorem]{Corollary}
\newtheorem{lemma}[theorem]{Lemma}
\newtheorem*{lemma*}{Lemma}
\newtheorem{assumption}{Assumption}
\newtheorem{system}{System}
\theoremstyle{definition}
\newtheorem{definition}[theorem]{Definition}
\newtheorem{remark}[theorem]{Remark}
\numberwithin{equation}{section}
\begin{document}

\title{The Batchelor spectrum of passive scalar turbulence\\ in stochastic fluid mechanics at fixed Reynolds number}
\author{Jacob Bedrossian\thanks{\footnotesize Department of Mathematics, University of Maryland, College Park, MD 20742, USA \href{mailto:jacob@math.umd.edu}{\texttt{jacob@math.umd.edu}}. J.B. was supported by NSF CAREER grant DMS-1552826 and NSF RNMS \#1107444 (Ki-Net)} \and Alex Blumenthal\thanks{\footnotesize Department of Mathematics, University of Maryland, College Park, MD 20742, USA \href{mailto:alex123@math.umd.edu}{\texttt{alex123@math.umd.edu}}. This material was based upon work supported by the National Science Foundation under Award No. DMS-1604805.} \and Sam Punshon-Smith\thanks{\footnotesize Division of Applied Mathematics,  Brown University, Providence, RI 02906, USA \href{mailto:punshs@brown.edu}{\texttt{punshs@brown.edu}}. This material was based upon work supported by the National Science Foundation under Award No. DMS-1803481.}}

\maketitle

\begin{abstract}

In 1959, Batchelor predicted that the stationary statistics of passive scalars advected in fluids with small diffusivity $\kappa$ should display a $|k|^{-1}$ power spectrum along an inertial range contained in the viscous-convective range of the fluid model. This prediction has been extensively 
tested, both experimentally and numerically, and is a core prediction of \emph{passive scalar turbulence}.

In this article we provide a rigorous proof of a version of Batchelor's prediction in the $\kappa \to 0$ limit when the scalar is subjected to a spatially-smooth, white-in-time stochastic source and is advected by the 2D Navier-Stokes equations or 3D hyperviscous Navier-Stokes equations in $\T^d$ forced by sufficiently regular, nondegenerate stochastic forcing. Although our results hold for fluids at arbitrary Reynolds number, this value is fixed throughout. 
Our results rely on the quantitative understanding of Lagrangian chaos and passive scalar mixing established in our recent works. 
Additionally, in the $\kappa \to 0$ limit, we obtain statistically stationary, weak solutions in $H^{-\ep}$ to the stochastically-forced advection problem \emph{without diffusivity}. These solutions are almost-surely not locally integrable distributions with non-vanishing average anomalous flux and satisfy the Batchelor spectrum at all sufficiently small scales. We also prove an Onsager-type criticality result which shows that no such dissipative, weak solutions with a little more regularity can exist.
\end{abstract}

\setcounter{tocdepth}{2}
{\small\tableofcontents}


\section{Introduction}\label{sec:Intro}

A fundamental problem in fluid mechanics concerns the behavior of the concentration of a scalar $g_t^\kappa$ being passively advected by an incompressible fluid velocity $u_t$ while also undergoing some small amount of molecular diffusion. In many circumstances the scalar exhibits complex, (approximately) statistically self-similar patterns over a range of scales, referred to as \emph{passive scalar turbulence} (see e.g. \cite{NyeBrodkey67,FalkovickEtAl01,ShraimanSiggia00,W00,MK99} for more discussions of the physical background and Remark \ref{rem:turb} for some context for the use of the word `turbulence'). 


In this work, we consider a fluid in the periodic box $\T^d, d = 2$ or $3$, where the fluid velocity $u_t$ evolves according to a time-continuous, randomly driven ergodic motion. The scalar $g_t^\kappa$ solves the following advection diffusion equation with stochastic source: 
\begin{equation}
\begin{aligned}
&\partial_t g_t^\kappa + u_t \cdot \grad g_t^\kappa - \kappa \Delta g_t^\kappa  = \dot{s}_t \, .
\end{aligned}
\end{equation}
Here $\kappa > 0$ is the molecular diffusivity, and the source $\dot{s}_t$ is a white-in-time, Gaussian process supported at low spatial frequencies. For simplicity, it suffices to consider ${s}_t(x) = b(x) {\beta}_t$, where $b$ is a non-zero smooth function on $\T^d$ and $\beta_t$ is a 1D Brownian motion independent from $u_t$. We will assume that the velocity $u_t$ itself evolves according to an incompressible, stochastically forced fluid model, for instance the {\em stochastic Navier-Stokes equations} on $\T^2$,
\[
\begin{aligned}
	&\partial_t u_t + u_t\cdot\nabla u_t + \nabla p_t - \nu \Delta u_t = Q\dot{W}_t \, , \\
    &\Div u_t = 0 \, , 
\end{aligned}
\]
or the {\em stochastic hyper-viscous Navier-Stokes equations} on $\T^3$, both of which are known to be ergodic with a unique stationary measure under fairly mild non-degeneracy assumptions on the stochastic forcing (see Sections \ref{sec:FluidMod} for more precise details). We also consider a variety of other finite dimensional fluid models with better time and space regularity (see Section \ref{sec:CktCx}).

The presence of the source $s_t$ and the ergodicity of $u_t$ allows for the scalar to settle into a {\em statistical steady state} with ensemble $\E$, where the scalar input from the source $\dot{s}_t$ is balanced by dissipation due to the diffusion from $\kappa \Delta$ and the law of $g_t^\kappa$ is the same for all times. Specifically, when ${s}_t = b{\beta}_t$, It\^{o}'s formula implies that in a statistical steady state, there is an average constant dissipation rate $\chi := \|b\|_{L^2}^2 >0$,
\[\label{eq:L2BalanceIntro}
2\kappa \E\| \grad g^\kappa\|_{L^2}^2 =  \chi
\]
providing a mechanism for a {\em cascade} to high frequencies (see Proposition \ref{prop:StatMes} for details). 

When $\kappa$ is taken very small, complex (approximately) self-similar patterns in the scalar $g_t^\kappa$ emerge. In a statistical steady state these patterns usually have an $L^2$ power spectrum that approximates a {\em power law} over a certain range of frequencies. For physical fluid flows and in numerical simulations of the Navier-Stokes equations, such power laws are frequently observed, with different regimes depending on the size of the Schmidt number $\operatorname{Sc} := \operatorname{Pr}/\operatorname{Re}$, where $\operatorname{Re} = \nu^{-1}$ is the Reynold's number, and $\operatorname{Pr} = \kappa^{-1}$ is the Prandtl number.
In his foundational paper \cite{Batchelor59}, Batchelor predicted that when $\operatorname{Sc} \gg 1$ (known as the Batchelor regime), the steady-state statistics for a passive scalar exhibit a $|k|^{-1}$ power spectrum over the {\em viscous-convective subrange} of frequencies, known as \emph{Batchelor's law} (or the \emph{Batchelor spectrum}):\footnote{Batchelor's original prediction was actually for scalars on $\R^3$ (see Section \ref{sec:Batchelors-arg-review} for more details) instead of the periodic box, the prediction below is adapted to the periodic setting} 
\begin{equation}\label{eq:Batch1}
  \Gamma(|k|) : = \abs{k}^{d-1}\EE \abs{\hat{g}^\kappa(k)}^2  \approx \chi\abs{k}^{-1} \quad \textup{ for }\quad (\ell^{NSE}_D)^{-1} \ll \abs{k} \lesssim \kappa^{-1/2} \, , 
\end{equation}
where for each $k \in \Z^d$, $\hat{g}^\kappa(k)$, denotes the Fourier transform of $g^\kappa$ on $\T^d$ and $\ell_D^{NSE} = (\nu/\ep)^{d/4}$ is the dissipative range for the Navier-Stokes equations with $\ep$ being the fluid energy dissipation rate.

\begin{figure}[H]
\begin{center}
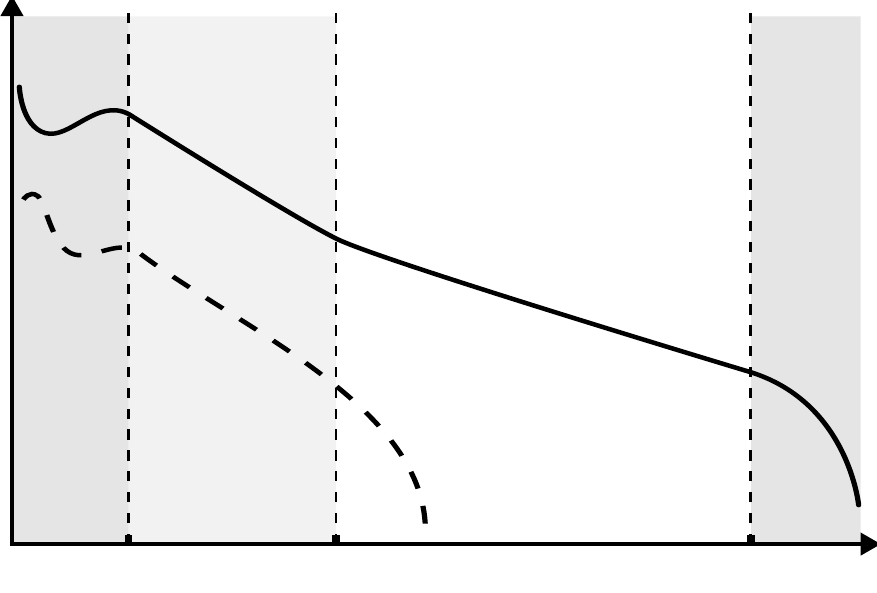
\end{center}
\caption{
The solid line above is the full prediction for the power spectrum $\Gamma(k) = \abs{k}^{d-1}\EE \abs{\hat{g}^\kappa(k)}^2$ of a statistically stationary passive scalar $g^\kappa$ advected by a \emph{turbulent} fluid $u_t$ in 3D. 
In the Batchelor regime $\kappa \ll \nu$, it is theorized that the power law $|k|^{-1}$
holds for scales between $\ell_D^{NSE}$ (the dissipative scale for Navier-Stokes) and $\kappa^{1/2}$ (the dissipative scale for Batchelor's regime).  Between $\ell_I$ and $\ell_{D}^{NSE}$ one expects \emph{Obukhov-Corrsin statistics}, e.g., the $|k|^{-5/3}$ law in dimension $d = 3$, as depicted by the dashed curve above ($E(k) = \abs{k}^{d-1} \EE \abs{\hat{u}(k)}^2$).
 No universality is expected at length-scales in the integral range above $\ell_I$. We emphasize that in the setting of the present manuscript, $\nu$ is fixed $O(1)$ and $\kappa \ll 1$; as such, even if $\nu$ itself is quite small, our methods do not distinguish the Obukhov-Corrsin regime from the integral range.
}
\label{fig:PowerLaw}  
\end{figure}

Since Batchelor made his prediction, engineers and physicists have been making measurements of the spectrum in nature, e.g., temperature and salinity variations in the ocean (see e.g. \cite{NyeBrodkey67,GrantEtAl68,DC80} and the references therein), in laboratory experiments (see e.g. \cite{GibsonSchwarz63,WuEtAl95,WMG97,MD96,JCT00,AK04} and the references therein), and in numerical studies (see e.g. \cite{HZ94,BDY97,YuanEtAl00,DSY10,AO03} and the references therein). As discussed in e.g. \cite{DSY10,AO03,MK99} it remains an open problem in physics to determine the settings in which the Batchelor spectrum is expected and to what degree of accuracy it holds. We will briefly discuss some theoretical studies in the physics literature in Section \ref{subsec:mechanismBatch}.

In this paper, we are primarily concerned with the \emph{Batchelor regime} at \emph{fixed Reynolds number}: where $\nu$ is considered fixed at an arbitrary (potentially small) number and $\kappa \ll \nu$. It is important to note that at length scales below the dissipative scale $\ell_{D}^{NSE}$ for the fluid (or equivalently, frequencies above $(\ell_D^{NSE})^{-1}$) the velocity field $u_t$ is expected to be smooth while only $g_t$ becomes rougher and rougher as $\kappa \to 0$, which makes it significantly easier to understand the ``nonlinear''\footnote{Nonlinear here refers to nonlinearity of the mapping $(u,g) \mapsto u\cdot \nabla g$} advection term $u_t \cdot \grad g_t$ in low regularity.


The Batchelor spectrum is, in some sense, predicated on smoothness of the velocity field -- it only holds over length scales relative to which the velocity field is essentially smooth.
This can be seen from Batchelor's original argument \cite{Batchelor59}; see further discussions in \cite{BMOV05,AntonsenEtAl96,AntonsenOtt1991,FalkovickEtAl01,BalkovskyFouxon99} and below in Section \ref{subsec:mechanismBatch}.
In this regard, the much simpler case of fixed Reynolds number seems to be a reasonable first place to begin any mathematical study of Batchelor-regime passive scalar turbulence, and indeed, of passive scalar turbulence in general.  
For frequencies below $(\ell_D^{NSE})^{-1}$, hydrodynamic turbulence gives rise to a different approximate power law (a $|k|^{-5/3}$ law for $d=3$), known as Obukhov-Corrsin spectra for the passive scalar \cite{O1949,Corrsin1951}; see e.g. \cite{ShraimanSiggia00,AO03}.
See Figure \ref{fig:PowerLaw} and the associated caption for a description of these two different expected regimes of passive scalar turbulence.

If one does not fix the Reynolds number and simultaneously takes $\nu, \kappa \to 0$ , while sending $\operatorname{Sc} \to \infty$, then the fluid itself is becoming turbulent while nevertheless, the viscous-convective subrange remains in the Batchelor regime. This is a situation that we {\em cannot} treat in this work. Indeed, since almost nothing rigorous has been established mathematically for the (very singular) turbulent limit $\nu \to 0$ for statistically stationary solutions $u_t$ to the Navier-Stokes equations\footnote{For example, it is unknown whether or not the kinetic energy remains bounded in 3D.}, this appears well out of reach of rigorous mathematical analysis for the time being. One model where this kind of `turbulent advection' has been studied is in the \emph{Kraichnan model}, in which $u_t$ is replaced by an idealized rough-in-space, white-in-time Gaussian velocity field, and the equation for $g_t$ is interpreted as a Stratonovich stochastic transport equation. This model was introduced for this purpose in \cite{Kraichnan68} and there is now a wide literature on this model in physics (see \cite{ShraimanSiggia00, CrisantiEtAl1991, crisanti2012products} and the references therein). Properties of stochastic transport with rough velocities have also been studied mathematically as well in the work on isotropic stochastic flows \cite{Jan2002-je,Baxendale1986-nc,Luo2008-jn} and well-posedness by stochastic perturbation \cite{Flandoli2009-pa,Fedrizzi2013-qe,Mohammed2015-fp}. See also the recent preprint \cite{DEGI19} for a deterministic work on turbulent advection by rough velocities.

\begin{remark}\label{rem:turb}
In the physics and engineering literature, `turbulence' is used to describe a wide range of observed phenomena in dynamics of weakly-damped, infinite-dimensional, nonlinear conservative systems in ``generic'' settings, including both 2D and 3D incompressible or compressible Navier-Stokes, passive scalars advected by fluids, nonlinear dispersive equations, magneto-hydrodynamics models, and kinetic models in plasmas \cite{Frisch1995,BE12,W00,ShraimanSiggia00,Lele94,Biskamp2003,Nazarenko11,ZLF12,GrovseljEtAl17}. 

In the Batchelor regime, the passive scalar model we consider is one of the simplest settings in applications where turbulent phenomena can be observed that shares many similarities with hydrodynamic turbulence \cite{ShraimanSiggia00}, specifically: a frequency cascade, ``anomalous'' dissipation, power-laws on the power spectrum and scaling laws on structure functions. 
\end{remark}

\subsection{Review of Batchelor's argument} \label{sec:Batchelors-arg-review}

In \cite{Batchelor59} Batchelor made his prediction by studying the effect of advection and diffusion on pure Fourier modes when the velocity field is a linear ``pure straining flow'', namely a linear velocity field $u_t(x) = Ax$ on $\R^3$ whose matrix $A$ is traceless with real distinct eigenvalues. He argued that, upon zooming in on the velocity field all the way down to the viscous-convective sub-range, the flow is most likely to be approximated by a pure straining flow (see Section \ref{subsec:mechanismBatch} for more discussion on the validity of this approximation). For such flows he showed that frequencies increase exponentially fast, and in a steady state exhibit an exact asymptotic formula for the power spectral density.

To understand the essence of this approach, let us consider the case $d=2$ and take the velocity field $u_t(x) = Ax$ on $\R^2$, where
\[
	A = \begin{pmatrix}
	\gamma & 0 \\ 0 & -\gamma 
	\end{pmatrix}.
\]
If one takes a pure Fourier mode as initial data $g_0(x) = \sin(\xi_0\cdot x)$ for the advection diffusion equation without a source then the solution takes the form $C^\kappa_t \sin (\xi_t\cdot x)$, where the amplitude $C_t^\kappa$ and frequency $\xi_t$ solve the ODE system 
\[
\dot C_t^\kappa = - \kappa |\xi_t|^2 C_t^\kappa, \quad \dot{\xi}_t = - A\xi_t.
\]
If the initial frequency $\xi_0\in \R^2$ has a non-trivial projection onto $\hat{e}_1 = (1,0)$, say for example $|\langle \xi_0, \hat{e}_1\rangle| = 1$, then there is an exponential-in-time increase in frequency $|\xi_t| \sim e^{\gamma t}$, indicating a {\em cascade} from low to high frequencies. At the same time, the amplitude decays double exponentially fast due to diffusion, to wit, $C_t^\kappa \sim \exp(-\kappa e^{2\gamma t}/2\gamma)$, since the diffusion acts more strongly at high frequencies.

To see what effect this has on the steady-state power-spectrum, consider a stochastic source $\dot{s}_t = \sin(\xi_0\cdot x)\dot{\beta}_t$ supported at frequency $\xi_0$. In a statistical steady state, the scalar field $g^\kappa$ is equal in law to the stochastic integral
\[
	g^\kappa(x) = \int_0^\infty C_s^\kappa  \sin(\xi_s \cdot x) \dee \beta_s \, 
\]
which is a Gaussian random variable in $L^2_\loc$. Using It\^{o}'s isometry, and the fact that $\chi : = \lim_{R\to \infty}\fint_{[0,R]^2} |\sin(\xi_s \cdot x)|^2\dx$ does not depend on $s$, we find that the average $L^2$ mass density carried by frequencies less than $n \geq 0$ is given by
\[
\begin{aligned}
	 \langle|\Pi_{\leq n}g^\kappa|^2\rangle &:= \lim_{R\to\infty}\fint_{[0,R]^2}\E|\Pi_{\leq n}g^\kappa(x)|^2\dx\\ 
	&= \chi \int_0^{t(n)} C_s^{2\kappa}\ds,
\end{aligned}
\]
where $\Pi_{\leq n}$ denotes the projection onto frequencies $|\xi|\leq n$ and $t(n)$ is the unique time such that $|\xi_{t(n)}| = n$ (which exists for large enough $n$ by monotonicity of $t\mapsto |\xi_t|$). The power-spectral density is then defined by
\[
	\Gamma(n) := \frac{\dee}{\dee n} \langle|\Pi_{\leq n}g^\kappa|^2\rangle = t^\prime(n) \chi C^{2\kappa}_{t(n)}.
\] 
That $|\xi_t| \sim e^{\gamma t}$ implies the asymptotic $t(n) \sim \log{n}/\gamma$ and $t^\prime(n) \sim \frac{1}{\gamma n}$, therefore 
\begin{equation}
\begin{aligned}
	\Gamma(n) 
	 &\sim \frac{\chi}{\gamma n}\exp\left(- \frac{\kappa n^2}{\gamma}\right). 
\end{aligned}
\end{equation}
We recover Batchelor's prediction $\Gamma(n) \sim \frac{\chi}{\gamma n}$ in the range $1 \ll n\leqc (\gamma/\kappa)^{1/2}$.

\subsection{Going beyond pure straining flows}\label{subsec:mechanismBatch}



It is natural to question whether a time-stationary pure straining flow is actually a good approximation of a fluid at small scales. Indeed, since the velocity fields changes in time, so too  does the approximating linear flow (i.e., the gradient), so one should at least expect a linear phase portrait changing in time. Moreover, there are many coherent Lagrangian structures (e.g., vortices) which locally exhibit shearing or rotational phase portraits, incompatible with the expansion/contraction exhibited by the pure strain flow. Remarkably, however, Batchelor's argument gets the right answer; in fact, Batchelor's law is a robust prediction, holding for a large class of {\em smooth} incompressible flows not necessarily arising from a physical fluid mechanical model. 

The purpose of this paper is to show that a ``cumulative'' version of Batchelor's prediction (see Theorem \ref{thm:Batch} for an exact statement) is a consequence of the {\em chaotic mixing} properties of $u_t$ proved in our previous works \cite{BBPS18,BBPS19I,BBPS19II}.

\subsubsection{Lagrangian chaos}

In short, Batchelor's argument succeeds because, even though the flow $u_t$ is not always well-approximated by a pure straining flow, the linearized time-$t$ motion of passive tracers (i.e., time-$t$ Lagrangian flow) \emph{does} resemble a pure strain flow for large times $t$. 
To make this more explicit, recall that in the absence of diffusivity ($\kappa = 0$) and sources ($s_t \equiv 0$), the scalar $g_t$ is given by 
$g_t=g_0\circ(\phi^t)^{-1}$, where the {\em Lagrangian flow map} $\phi^t:\T^d \to \T^d$ describes the position of a particle $x_t = \phi^t(x)$ starting from $x\in \T^d$ and solves the ODE
\[
	\frac{\dee}{\dt}\phi^t(x) = u_t(\phi^t(x)) \, \quad \phi^0(x)=x \, .
\]
The derivative of the flow $D\phi^t(x):\R^d \to \R^d$ then solves the linearized equation
\[
	\frac{\dee}{\dt}D\phi^t(x) = Du_t(\phi^t(x))D\phi^t,\quad D\phi^0(x)=\Id.
\]

By incompressibility (note $\det D \phi^t (x) \equiv 1$),  growth in time of $| D\phi^t(x)|$ for typical $x$ is associated with the development of strongly expanding and contracting directions of $D \phi^t(x)$ for each fixed $t$, features resembling those of the linear pure strain flow phase portrait. 
Growth of $|D \phi^t(x)|$ can be quantified in terms of positivity of the Lyapunov exponent
\begin{align}\label{defn:lyap}
\lambda_1(x) = \limsup_{t \to \infty} \frac{1}{t} \log |D\phi^t(x)| > 0
\end{align}
for `typical' $x \in \T^d$, which we refer to as {\em Lagrangian chaos}. Previously Lagrangian chaos has been proved for stationary, white-in-time velocity fields 
in \cite{baxendale1993kinematic, baxendale1989lyapunov}. 
In \cite{BBPS18}, we proved that the Lagrangian flow associated to solutions $u_t$ of the stochastically-forced 
2D Navier-Stokes and 3D hyperviscous Navier-Stokes in the sense that there is a deterministic constant $\lambda_1 > 0$ such that \eqref{defn:lyap} holds with lim-sup replaced with lim for all $x$ and all initial fluid configurations, almost surely (see \cite{BBPS18} for rigorous statements). 

As has been observed by a long line of previous authors (see, e.g., \cite{BMOV05,AntonsenEtAl96,AntonsenOtt1991,FalkovickEtAl01,BalkovskyFouxon99}), 
Lagrangian chaos is associated with a low-to-high transfer of $L^2$ mass of passive scalars. Indeed, without diffusivity ($\kappa = 0$)
and sources ($\dot s_t = 0$) we clearly have that $\nabla g_t \circ \phi^t = (D \phi^t)^{-\top} \nabla g_0$, hence one expects $\| \nabla g_t\|_{L^2}$ to grow exponentially fast in time. In view of $L^2$ conservation ($\| g_t\|_{L^2} = \| g_0\|_{L^2}$), this is strongly reminiscent of the transfer of $L^2$ mass from low to high frequencies appearing in Batchelor's original argument for pure shear-strain flow. 

We emphasize that the results of \cite{BBPS18} and the proof of \eqref{defn:lyap} rely crucially upon the stochastic framework: it is often very difficult to provide rigorous proofs of
positivity of Lyapunov exponents, even for deceptively simple models such as the Chirikov standard map family \cite{duarte1994plenty, gorodetski2012stochastic} (a discrete-time toy model of the Lagrangian flow \cite{CrisantiEtAl1991}). See \cite{pesin2010open, young1995ergodic} for more discussions. 
In Navier-Stokes, one of the enemies is the formation of coherent vortices inside of which hyperbolicity is halted (see e.g. \cite{BabianoProvenzale07}). The arguments in our paper \cite{BBPS18} imply that with probability 1, vortices of this kind cannot permanently trap any particles.

\subsubsection{Uniform in diffusivity chaotic mixing} 


For the fluid models considered in this paper, a positive Lyapunov exponent alone is not enough to prove Batchelor's law.
Lagrangian chaos only implies that some ``scalar energy'' is going to high frequencies, i.e. some small scales are being created.
However, we need here that small scales are being created everyhere in a reasonably uniform way with high probability (i.e. with explicit moment bounds on fluctuations in the rate of small scale creation). 
To quantify this, we use a much stronger property: uniform-in-$\kappa$, almost-sure exponential mixing\footnote{One can construct dynamical systems with a positive Lyapunov exponent but arbitrarily slow (e.g., polynomial or logarithmic) mixing/decay of Lagrangian correlations, for example, Pommeau-Manneville maps (see, e.g., \cite{liverani1999probabilistic, sarig2002subexponential}).}.
In \cite{BBPS19II}, we proved that if $\bar g_t^\kappa$ is mean zero and solves the initial value problem
\begin{align}
& \partial_t \bar g_t^\kappa + u_t\cdot \grad \bar g_t^\kappa- \kappa \Delta \bar g_t^\kappa =0 
\end{align}
with diffusivity but \emph{no} random source, then there exists a \emph{deterministic} constant $\gamma > 0$ \emph{independent of $\kappa$} and random constant $D_\kappa$ (depending on initial fluid configuration) such that 
\begin{align}
	\|\bar g_t^\kappa\|_{H^{-1}} := \sup_{\|f\|_{H^1} =1 }\left|\int f \,\bar g_t^\kappa\, \dx\right| \leq D_\kappa e^{-\gamma t}\|\bar g_0\|_{H^1}, \label{eq:k=0-mixing}
\end{align}
where $D_\kappa$ also has suitable moment bounds \emph{independent of $\kappa$} (see Definition \ref{defn:randConst} and Theorem \ref{thm:UniMix} for precise statements). See \cite{Thiffeault2012-xs} for a discussion of using negative Sobolev norms to quantify mixing. There is a large mathematical literature on scalar mixing in the mathematics literature; see e.g. \cite{Bressan03,LDT11,S13,IyerXu14,YZ17,TZ18,GGM19,DEGI19} and the references therein.

To see why uniform-in-$\kappa$ scalar mixing implies an exponential increase of frequency scale, note that 
\begin{align}
\norm{\Pi_{\leq N} \bar g_t^\kappa}_{L^2} \leq N \norm{\bar g_t^\kappa}_{H^{-1}} \leq D_\kappa N e^{-\gamma t} \norm{\bar g_0}_{H^1}  \, , 
\end{align}
where $\Pi_{\leq N}$ denotes projection to Fourier modes of frequency $\leq N$.
From this, one can see that for times $t \gg \frac{\log N}{\gamma}$, most of the scalar has been transferred from frequencies $\leq N$ to higher frequencies.\footnote{ The appearance of the mixing rate $\gamma$ here in the estimate $t \gg \frac{\log N}{\gamma}$ for the low-to-high transfer is suggestive of why 
Lagrangian chaos alone is insufficient to prove Batchelor's law.}
As in 
Batchelor's original argument, this exponentially fast transfer from low to high frequencies is exactly the mechanism which gives rise to Batchelor's law when $u_t$ is given by Navier-Stokes -- one difficulty is of course dealing with potential unboundedness of the random constant $D$,
which captures fluctuations in the mixing time.

Our work \cite{BBPS19II} builds on our earlier work \cite{BBPS19I} that proved the corresponding statement for $\kappa = 0$; similarly, this latter work uses the Lagrangian chaos result of \cite{BBPS18} as a lemma.
Our works \cite{BBPS19I,BBPS19II} are based on analyzing two-point statistics of the Lagrangian flow and a key step is to upgrade the positivity of the Lyapunov exponent to positivity of the \emph{moment Lyapunov exponents}
\begin{align}
\Lambda(p) = - \lim_{t \to \infty} \frac{1}{t} \log \EE |D\phi^t(x)|^{-p} \, , \quad 0 < p \ll 1.
\end{align}
This is deeply related to large deviations of the finite-time Lagrangian Lyapunov exponents as $t\to \infty$ (for the relation of moment Lyapunov exponents to large deviations in the convergence of Lyapunov exponents, see, e.g., \cite{arnold1984formula, arnold1987large, arnold1986lyapunov}). 
It had already been realized by physicists that fluctuations of Lagrangian Lyapunov exponents should play a key role in Batchelor's law (see discussions in \cite{AntonsenOtt1991,AntonsenEtAl96,BalkovskyFouxon99} and the references therein) and so our works are in some ways a mathematically rigorous completion of some of these ideas.


\subsection{Main results}\label{subsec:main-results}

We now turn to detailed statements of the main results of this paper. 
After providing some preliminary definitions and conventions, in 
Section \ref{subsubsec:cumBatchSpec} we state a ``cumulative'' version of 
Batchelor's power law spectrum for fluids at fixed, finite Reynolds number. 
Section \ref{subsubsec:Yaglom} describes a version of Yaglom's law, a 
scaling law analogous to the $-4/5$ law in hydrodynamic turbulence. 
In Section \ref{sec:ADOC} and \ref{subsubsec:oCritSubsc} we turn our attention
to the description of \emph{ideal passive scalar turbulence}, i.e., the description 
of a class of low-regularity solutions to the ``inviscid'' $\kappa = 0$ advection-diffusion equation exhibiting a scale-by-scale flux of $L^2$ mass from low to high modes. 

The velocity field $(u_t)$ will take values in the space $\Hbf$ consisting mean zero divergence free velocity fields belongs to $H^\sigma = H^\sigma(\T^d;\R^d)$, the Sobolev space of mappings from $\T^d\to \R^d$ with regularity $\sigma > \frac{d}{2}+3$; note that this implies velocities are always at least $C^3$ in space. The {\em velocity process} $(u_t)$ will evolve in $\Hbf$ according to one of the following stochastic PDEs depending on whether $d=2$ or $3$: 
\begin{system}[2D Navier-Stokes equations]\label{sys:NSE}When $d=2$, $(u_t)$ solves
\begin{equation}
\begin{aligned}
&\partial_t u_t + u_t \cdot \grad u_t =- \grad p_t + \nu \Delta u_t + Q \dot W_t \\ 
&\Div u_t = 0 \, , 
\end{aligned}
\end{equation}
where $u_0 = u \in \Hbf$. Here, the viscosity $\nu >0$ is a fixed constant.
\end{system}

\begin{system}[3D hyper-viscous Navier-Stokes]\label{sys:3DNSE}When $d=3$, $(u_t)$ solves
\begin{equation}
\begin{aligned}
&\partial_t u_t + u_t \cdot \grad u_t =- \grad p_t - \nu \Delta^{2} u_t + Q \dot W_t \\ 
&\Div u_t = 0,
\end{aligned}
\end{equation}
where $u_0 = u \in \Hbf$. Here, the hyperviscosity $\nu>0$ is a fixed constant.
\end{system}
In the above systems, $W_t$ is a cylindrical Wiener process on mean-zero, divergence free $L^2$ vector fields with respect to an associated canonical stochastic basis $(\Omega_W,\mathscr{F}^W,(\mathscr{F}^W_t),\P_W)$ and $Q$ a positive Hilbert-Schmidt operator on mean-zero, divergence free $L^2$ vector fields satisfying suitable non-degeneracy and regularity assumptions. See Section \ref{sec:FluidMod} for  full details. 
We couple systems \ref{sys:NSE} or \ref{sys:3DNSE} to the advection-diffusion equation
\begin{equation}\label{eq:scalar-intro}
\begin{aligned}
& \partial_t g^\kappa_t + u_t \cdot \grad g^\kappa_t - \kappa \Delta g^\kappa_t = b \dot{\beta}_t \\
& g^\kappa_0 = g \, ,
\end{aligned}
\end{equation}
where $\beta_t$ is a Wiener process on another canonical probability space $(\Omega_\beta,\mathscr{F}^\beta,(\mathscr{F}^\beta_t),\P_{\beta})$. We denote the product measure $\P = \P_W\times \P_{\beta}$ our main probability measure on the associated product space $\Omega = \Omega_W\times\Omega_\beta$ with standard product sigma-algebra $\mathscr{F} = \mathscr{F}^W\otimes\mathscr{F}^\beta$ and filtration $\mathscr{F}_t= \mathscr{F}^W_t\otimes\mathscr{F}^\beta_t$ . Equation \eqref{eq:scalar-intro} has a $\P$ almost-sure, unique, $\mathscr{F}_t$-adapted weak solution for every initial $(u,g) \in \Hbf \times L^2$ (see Proposition \ref{prop:WPapp} below) and defines a Markov semigroup $P_t^\kappa$ on bounded, measurable observables $\varphi:\Hbf \times L^2 \to \R$ via
\begin{align}
P_t^\kappa \varphi(u,g) = 
\E_{(u,g)} \varphi(u_t, g_t^\kappa) := 
\EE\left[\varphi(u_t,g_t^\kappa) | (u_0,g_0^\kappa) = (u,g)\right]  \, .
\end{align}
    
\subsubsection{The cumulative Batchelor spectrum}\label{subsubsec:cumBatchSpec}
Recall, a probability measure $\mu$ on $\Hbf \times L^2$ is called \emph{stationary} for the Markov semigroup $P_t^\kappa$ if for all bounded, measurable $\varphi:\Hbf \times L^2 \to \R$ and all $t  > 0$,
\begin{align}
\int P_t^\kappa \varphi\,\dee\mu = \int \varphi \,\dee\mu. 
\end{align}
For observables $\varphi:\Hbf \times L^2 \to \R$, we frequently write $\E_\mu\varphi(u,g) := \int \varphi\, \dee \mu$. We will also say a probability measure is stationary for a given process if it is stationary for the corresponding Markov semi-group.

Due to the infinite-dimensionality of $\Hbf$, 
uniqueness of stationary measures for $(u_t)$ is in general a subtle question: 
on the domain $\T^2$ this was first proved in \cite{FM95} in the setting of completely nondegenerate noise, and has by now been established
even for highly degenerate noise \cite{HM06}.
In comparison, the homogeneous part of the evolution equation for $g_t^\kappa$ is linear, and so
uniqueness of stationary measures for $(u_t, g_t^\kappa)$ is relatively straightforward due to the contracting nature on $L^2$.
\begin{proposition} \label{prop:StatMes}
Assume $(u_t)$ admits a unique stationary probability measure $\mu$ on $\Hbf$. 
Then for all $\kappa > 0$, there exists a unique stationary measure $\mu^\kappa$ for $(u_t,g_t^\kappa)$ on $\Hbf \times L^2$. Moreover,
\begin{align}\label{eq:L2Balance}
2\kappa \E_{\mu^\kappa} \| \grad g\|_{L^2}^2 = \|b\|_{L^2}^2 =: \chi. 
\end{align}
\end{proposition}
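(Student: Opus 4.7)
The plan is to establish existence and uniqueness of $\mu^\kappa$ by the standard two-step argument (Krylov-Bogoliubov plus a coupling/contraction argument), and then extract the balance law \eqref{eq:L2Balance} as a direct consequence of It\^o's formula applied to $\|g_t^\kappa\|_{L^2}^2$ at stationarity. The skew-product structure of $(u_t, g_t^\kappa)$ is the key feature: $u_t$ evolves autonomously with its unique stationary measure $\mu$, and the $g$-dynamics is linear in $g$ with a bounded stochastic forcing, which makes uniqueness essentially a contraction argument driven by the $\kappa \Delta$ term.

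For existence, I would first derive the a priori energy balance: applying It\^o to $\|g_t^\kappa\|_{L^2}^2$ and using incompressibility $\langle g, u\cdot\nabla g\rangle = 0$, we get
\[
\frac{\dee}{\dt} \E \|g_t^\kappa\|_{L^2}^2 = -2\kappa\, \E\|\grad g_t^\kappa\|_{L^2}^2 + \|b\|_{L^2}^2.
\]
Since $b$ (and hence $g_t^\kappa$) has mean zero, the Poincar\'e inequality then gives a uniform-in-$t$ bound on $\E\|g_t^\kappa\|_{L^2}^2$, and integration over $[0,T]$ yields the time-averaged bound $\frac{1}{T}\int_0^T \E\|\grad g_s^\kappa\|_{L^2}^2 \,\ds \leqs \kappa^{-1}$. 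Combining the $H^1$-average bound on $g$ with the standard moment bounds for the stationary fluid model on $\Hbf$ (in some Sobolev norm stronger than $H^\sigma$), the time-averaged laws $\frac{1}{T}\int_0^T P_t^\kappa((u,g),\cdot)\,\dt$ are tight on $\Hbf \times L^2$ by compact embedding. Feller continuity of $P_t^\kappa$ then yields a stationary measure $\mu^\kappa$ via Krylov-Bogoliubov.

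For uniqueness, let $\mu_1^\kappa$ and $\mu_2^\kappa$ be two stationary measures. Since $u_t$ evolves autonomously, each marginal $\pi_1 \mu_i^\kappa$ is stationary for $(u_t)$ alone and hence must equal $\mu$ by hypothesis. Disintegrate $\mu_i^\kappa(\dee u,\dee g) = \mu(\dee u)\,\rho_i(u,\dee g)$ and form the synchronous coupling: sample $u_0 \sim \mu$, draw $g_0^1,g_0^2$ independently from $\rho_1(u_0,\cdot),\rho_2(u_0,\cdot)$, and evolve both scalars against the \emph{same} realizations of $u_t$ and $\beta_t$. Then the difference $h_t := g_t^1 - g_t^2$ solves the noiseless advection--diffusion equation $\partial_t h + u_t\cdot\grad h - \kappa\Delta h = 0$, so by incompressibility and Poincar\'e,
\[
\|h_t\|_{L^2}^2 \leq e^{-2\kappa \lambda_1 t}\|h_0\|_{L^2}^2 \xrightarrow{t\to\infty} 0 \quad \text{a.s.},
\]
where $\lambda_1$ is the first nontrivial Laplacian eigenvalue. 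For a bounded Lipschitz test function $\varphi$ on $\Hbf \times L^2$, stationarity gives $\int \varphi \,\dee\mu_i^\kappa = \E\varphi(u_t,g_t^i)$, and the a.s. convergence $\|h_t\|_{L^2}\to 0$ together with the uniform $L^2$ moment bound above yields $\int \varphi\,\dee\mu_1^\kappa = \int\varphi\,\dee\mu_2^\kappa$, proving $\mu_1^\kappa = \mu_2^\kappa$.

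Finally, for \eqref{eq:L2Balance}, sample $(u_0,g_0) \sim \mu^\kappa$. By stationarity $\E_{\mu^\kappa}\|g_t^\kappa\|_{L^2}^2$ is constant in $t$, and the martingale part of the It\^o expansion of $\|g_t^\kappa\|_{L^2}^2$ integrates to zero in expectation once we justify that $\E_{\mu^\kappa}\|\grad g\|_{L^2}^2 < \infty$ (which follows from the time-averaged $H^1$ bound derived above combined with stationarity). Therefore
\[
0 = \frac{\dee}{\dt}\E_{\mu^\kappa}\|g_t^\kappa\|_{L^2}^2 = -2\kappa\, \E_{\mu^\kappa}\|\grad g\|_{L^2}^2 + \|b\|_{L^2}^2,
\]
which is \eqref{eq:L2Balance}. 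The only genuinely nontrivial step is verifying the tightness required for Krylov-Bogoliubov, since one must combine the $\kappa$-dependent $H^1$ bound on $g$ with Sobolev regularity moment estimates for the stationary velocity; the uniqueness argument, by contrast, is essentially automatic once the marginal uniqueness for $(u_t)$ is in hand.
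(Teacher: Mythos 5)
Your proposal is correct, and the balance law \eqref{eq:L2Balance} is obtained exactly as the paper does (It\^o's formula at stationarity, with finiteness of $\E_{\mu^\kappa}\|\grad g\|_{L^2}^2$ extracted from the time-integrated energy identity). However, your uniqueness argument takes a genuinely different route from the paper's. The paper (Proposition \ref{prop:uniqueStatMeasPSP}) passes to ergodic stationary measures via the ergodic decomposition, represents $\int\psi\,\dee\mu_i^\kappa$ as a Birkhoff time average, uses Fubini to find a \emph{common} velocity initial condition $u$ in the projections of the two full-measure sets of good data, and then invokes the enhanced dissipation estimate of Theorem \ref{thm:ED} to force the two trajectories together. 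You instead disintegrate each $\mu_i^\kappa$ over the common $u$-marginal $\mu$, run a synchronous coupling, and obtain decay of the difference from the pathwise identity $\frac{\dee}{\dt}\|h_t\|_{L^2}^2=-2\kappa\|\grad h_t\|_{L^2}^2$ together with Poincar\'e. This avoids both the Birkhoff/ergodic-decomposition machinery and any input from the mixing results of \cite{BBPS19II}; it is in the spirit of Remark \ref{rmk:generalErgPSprocess}, which notes that only a basic $L^2$ dissipation estimate is actually needed. Two minor points: the Poincar\'e step requires $h_t$ to be mean zero, which holds because the scalar state space is mean-zero $L^2$ (consistent with the paper's conventions, but worth saying explicitly, since on all of $L^2$ uniqueness fails as constants are preserved); and in the final limit you need only boundedness of $\varphi$ and dominated convergence, not the uniform $L^2$ moment bound. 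Your Krylov--Bogoliubov existence argument supplies a step the paper leaves implicit and is standard.
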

Uniqueness of $\mu^\kappa$ is proved in Section \ref{subsec:unqiueness}, 
and has the following consequence by the Birkhoff ergodic theorem: for $\mu^\kappa$-typical
initial $(u,g) \in \Hbf \times L^2$ and any continuous observable $\phi \in L^1(\mu^\kappa)$
on $\Hbf \times L^2$, we have $\PP$-almost-surely\footnote{When $(u_t)$ is forced with nondegenerate noise, as we do in this paper, it is possible to promote the convergence in \eqref{eq:convBirkhoff} to \emph{all}
initial $(u, g) \in \Hbf \times L^2$ when one considers bounded, uniformly 
continuous observables $\varphi : \Hbf \times L^2 \to \R$. 
This follows from (1) the strong Feller property for
the Markov semigroup associated to $u_t$ (see \cite{FM95}) and (2) a small variation of the proof of Proposition \ref{prop:StatMes} given in Section \ref{subsec:unqiueness}. Details are omitted for brevity.}
\begin{align}\label{eq:convBirkhoff}
\lim_{T \to \infty} \frac{1}{T} \int_{0}^T \phi(u_t, g_t^\kappa)\,\dt = \int \phi\, \dee \mu^\kappa \, .
\end{align}
Equation \eqref{eq:L2Balance} follows from \eqref{eq:convBirkhoff} and It\^o's formula.
The convergence of time averages to ensemble averages in \eqref{eq:convBirkhoff} for typical initial data
$(u,g)$ confirms the statistically stationary setting described above: typical time-asymptotic behavior is captured by a unique stationary measure $\mu^\kappa$. 

Our main result is Batchelor's law on the \emph{cumulative} power spectrum (see Remark \ref{rmk:CPS}). Let $\Pi_{\leq N}$ denote the $L^2$ projection to the Fourier basis functions with frequency $\abs{k} \leq N$ (see Section \ref{sec:FluidMod}).
\begin{theorem}[Batchelor's law on the cumulative power spectrum] \label{thm:Batch}
There exists an $N_0$ (depending on $\nu$ but independent of $\kappa$) such that for all $\kappa \in (0,1)$ sufficiently small and $p \in [1,\infty)$, the following holds with implicit constants\footnote{We denote $f \lesssim_{p,q,...} h$ if there exists a constant $C>0$ depending on $p,q,...$ but independent of the other parameters of interest such that $f \leq Ch$ and $f \approx h$ when $f \lesssim h$ and $h \lesssim f$. For the entire paper, these implicit constants will \emph{never} depend on $\kappa, N$ or $t$.} \emph{independent of $\kappa$ and $N$}:
\begin{align}
\left(\E_{\mu^\kappa} \|\Pi_{\leq N} g\|_{L^2}^{2p}\right)^{1/p} & \approx_{p} \log N \quad\quad \textup{ for all }  N_0 \leq N \leq \kappa^{-1/2}.  \label{eq:PSIR}
\end{align}
Moreover, for all $s \in [0, \sigma-\tfrac{3d}{2}-1)$ and $\forall p \in [1,\infty)$, 
\begin{align}
  \left(\EE_{\mu^\kappa}\|g\|_{H^s}^{2p} \right)^{1/p} \lesssim_{p,s} \kappa^{-s} \abs{\log \kappa}. \label{ineq:preg} 
\end{align}
\end{theorem}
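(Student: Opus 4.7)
My plan is to exploit the stochastic integral representation of the stationary scalar together with the uniform-in-$\kappa$ mixing estimate of Theorem \ref{thm:UniMix}. Let $J^\kappa_{s,t}$ denote the (random) solution operator from time $s$ to time $t$ of the homogeneous equation $\partial_\tau g + u_\tau\cdot\nabla g - \kappa\Delta g = 0$. Duhamel's formula applied in statistical stationarity gives
\[
g^\kappa_0 = \int_{-\infty}^0 J^\kappa_{s,0} b\,\dee\beta_s.
\]
Conditioning on the history $(u_s)_{s\leq 0}$, this is a centered Gaussian in $L^2$, and by It\^o's isometry together with Gaussian hypercontractivity (Kahane-Khintchine for Hilbert-valued Gaussians), for all $p\geq 1$,
\[
\E\bigl[\|\Pi_{\leq N}g^\kappa_0\|_{L^2}^{2p}\,\big|\,u\bigr] \lesssim_p \left(\int_{-\infty}^0 \|\Pi_{\leq N}J^\kappa_{s,0}b\|_{L^2}^2\,\ds\right)^{\!p}.
\]

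For the upper bound in \eqref{eq:PSIR}, I would interpolate between the trivial bound $\|\Pi_{\leq N}J^\kappa_{s,0}b\|_{L^2}\leq \|b\|_{L^2}$ and the mixing bound $\|\Pi_{\leq N}J^\kappa_{s,0}b\|_{L^2}\leq N\|J^\kappa_{s,0}b\|_{H^{-1}}\leq N D_\kappa e^{-\gamma|s|}\|b\|_{H^1}$. Splitting the integral at the crossover $|s^*|=\gamma^{-1}\log N$ yields
\[
\int_{-\infty}^0\|\Pi_{\leq N}J^\kappa_{s,0}b\|_{L^2}^2\,\ds \leq \gamma^{-1}(\log N)\|b\|_{L^2}^2 + (2\gamma)^{-1}D_\kappa^2\|b\|_{H^1}^2.
\]
Raising to the $p$th power and taking $\E_{\mu^\kappa}$, using the uniform-in-$\kappa$ moments of $D_\kappa$ provided by Theorem \ref{thm:UniMix}, produces $(\E_{\mu^\kappa}\|\Pi_{\leq N}g\|^{2p})^{1/p}\lesssim_p \log N$ for $N\geq N_0$.

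For the lower bound, Jensen's inequality reduces matters to showing $\E_{\mu^\kappa}\|\Pi_{\leq N}g\|_{L^2}^2\gtrsim \log N$. By stationarity of $u$, the $\E$ of the representation rewrites as $\int_0^\infty \E\|\Pi_{\leq N}\tilde g^\kappa_\tau\|_{L^2}^2\,d\tau$, where $\tilde g^\kappa_\tau:=J^\kappa_{0,\tau}b$ solves the homogeneous advection-diffusion with $\tilde g^\kappa_0=b$. The essential inequality is
\[
\|\Pi_{\leq N}\tilde g^\kappa_\tau\|_{L^2}^2 \geq \|\tilde g^\kappa_\tau\|_{L^2}^2 - N^{-2}\|\nabla\tilde g^\kappa_\tau\|_{L^2}^2.
\]
The $L^2$ identity $\|\tilde g^\kappa_\tau\|^2=\|b\|^2-2\kappa\int_0^\tau\|\nabla\tilde g^\kappa_r\|^2\,\dr$ and the $H^1$ Gronwall bound $\|\nabla\tilde g^\kappa_\tau\|_{L^2}^2\leq \|\nabla b\|_{L^2}^2\exp\!\bigl(2\int_0^\tau\|\nabla u_r\|_{L^\infty}\dr\bigr)$ together imply that, on the good event $A_{T_*}:=\{\int_0^{T_*}\|\nabla u_r\|_{L^\infty}\dr\leq CT_*\}$ with $C:=2\E\|\nabla u\|_{L^\infty}$ (of probability $\geq 1/2$ by Chebyshev in stationarity using $H^\sigma$ moments of $u$), one can choose $T_*=c(\log N)/C$ with $c$ sufficiently small so that both $\|\tilde g^\kappa_\tau\|^2\geq \tfrac12\|b\|^2$ and $N^{-2}\|\nabla\tilde g^\kappa_\tau\|^2\leq \tfrac14\|b\|^2$ hold simultaneously for all $\tau\in[0,T_*]$, precisely in the range $N_0\leq N\leq \kappa^{-1/2}$. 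Integrating on $A_{T_*}$ and taking expectation yields $\E_{\mu^\kappa}\|\Pi_{\leq N}g\|^2\gtrsim T_*\|b\|^2\,\PP(A_{T_*})\gtrsim \log N$. The main obstacle is exactly this step: mixing provides only an upper bound on low-to-high transfer, so the persistence of low-frequency mass over a time horizon $\sim \log N$ must be established independently, balancing the diffusion loss (which is harmless only in the regime $N\leq \kappa^{-1/2}$) against Gronwall growth of $\|\nabla\tilde g^\kappa\|$ via an ergodic/Chebyshev argument on $u$.

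For the regularity bound \eqref{ineq:preg}, I would decompose at the Batchelor scale $\kappa^{-1/2}$:
\[
\|g\|_{H^s}^2 \lesssim \kappa^{-s}\|\Pi_{\leq \kappa^{-1/2}}g\|_{L^2}^2 + \sum_{|k|>\kappa^{-1/2}}|k|^{2s}|\hat g(k)|^2.
\]
The low-frequency piece inherits the bound $\lesssim \kappa^{-s}|\log\kappa|$ directly from \eqref{eq:PSIR} at $N=\kappa^{-1/2}$. For the high-frequency piece, apply It\^o's formula to $\|g\|_{H^s}^{2p}$ in stationarity; the drift contains the nonlinear flux $\langle g,u\cdot\nabla g\rangle_{H^s}$, which a standard commutator/product estimate controls by $\|u\|_{C^{1+s+\epsilon}}\|g\|_{H^s}^2$. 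The restriction $s<\sigma-\tfrac{3d}{2}-1$ ensures that the relevant Sobolev embeddings and product estimates close with uniform-in-stationarity moments of $\|u\|_{H^\sigma}$. Interpolating $\|g\|_{H^s}\leq \|g\|_{H^{s+1}}^{s/(s+1)}\|g\|_{L^2}^{1/(s+1)}$ and using Young's inequality to absorb the top-order contribution into the parabolic drift $\kappa\|g\|_{H^{s+1}}^2$, then substituting the already-proven bound $\E_{\mu^\kappa}\|g\|_{L^2}^{2p}\lesssim |\log\kappa|^p$, closes the estimate.
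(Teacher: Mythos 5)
Your proposal is correct in substance and shares the paper's overall architecture — the Duhamel/It\^o-isometry representation of the stationary scalar, the bound $\|\Pi_{\leq N}h\|_{L^2}\leq N\|h\|_{H^{-1}}$ combined with the uniform mixing of Theorem \ref{thm:UniMix} for the upper bound, and the splitting $\|\Pi_{\leq N}\tilde g\|_{L^2}^2\geq\|\tilde g\|_{L^2}^2-N^{-2}\|\nabla\tilde g\|_{L^2}^2$ for the lower bound — but it diverges from the paper in three genuine ways. First, you use the pullback representation $g_0=\int_{-\infty}^0 J^\kappa_{s,0}b\,\dee\beta_s$ and conditional Gaussian hypercontractivity, where the paper runs forward in time, uses Burkholder--Davis--Gundy plus Minkowski, and passes to $\mu^\kappa$ via the Birkhoff ergodic theorem; yours is cleaner for moments, but the mixing constant in your tail integral is really $D_\kappa(\theta_s\omega,u_s)$, varying with $s$, so you must push the expectation inside the $s$-integral (Minkowski) rather than factor out a single $D_\kappa^2$, and the pullback identification itself deserves a line (the initial-data term dies by Theorem \ref{thm:ED}). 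Second — the most interesting difference — for low-frequency persistence the paper invokes the optimal mixing-time lower bound of Theorem \ref{thm:optTimeScale} with its moment control on $\delta^{-1}$, whereas you re-derive what is needed from scratch via the $L^2$ balance, the Gr\"onwall bound on $\|\nabla\tilde g\|_{L^2}$, and a Markov good event of probability $\geq 1/2$; this is more elementary and self-contained, with the constraint $N\leq\kappa^{-1/2}$ entering exactly where it should (to make $2\kappa T_*\|\nabla b\|_{L^2}^2N^{2c}\ll\|b\|_{L^2}^2$). Third, for \eqref{ineq:preg} the paper proves a one-step parabolic regularization estimate (Lemma \ref{lem:parabolic}, via the multiplier $(1+\sqrt{\kappa}\brak{\grad})^{\gamma t}$) and composes it with the stationary $L^2$ bound, while you run a stationary $H^s$ energy balance with a commutator estimate, interpolation, and absorption into $\kappa\|g\|_{H^{s+1}}^2$; this also closes (the final interpolation gives $(\kappa^{-s-1}|\log\kappa|)^{s/(s+1)}|\log\kappa|^{1/(s+1)}=\kappa^{-s}|\log\kappa|$), though for $p>1$ the It\^o formula for $\|g\|_{H^s}^{2p}$ is messier than the paper's route and one must check a priori finiteness of $\E_{\mu^\kappa}\|g\|_{H^{s+1}}^{2}$ to justify the balance. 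None of these differences is a gap.
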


\begin{remark}
  Batchelor's law is often stated with a constant proportional to $\chi= 2\kappa \EE \norm{\grad g^\kappa}_{L^2}^2 = \|b\|_{L^2}^2$, as in \eqref{eq:Batch1}. 
A careful reading of our proof provides a simple estimate on the constants in \eqref{eq:PSIR} in terms of $b$, specifically, $\forall s > 0$, the following holds with implicit constants independent of $b$:
\begin{align}
\log{N}\frac{\chi^2}{\norm{b}_{H^s}^2} \lesssim_{p,s}  \left(\E_{\mu^\kappa} \|\Pi_{\leq N} g\|_{L^2}^{2p}\right)^{1/p} & \lesssim_{p,s} \norm{b}_{H^s}^2 \log N \quad\quad \textup{ for all }  N_0 \leq N \leq \kappa^{-1/2}, \label{ineq:Batch2}
\end{align}
and that it suffices to take 
\[
N_0 \approx_{s,\delta} \left( \frac{\norm{b}_{H^s}^{2/s}}{\chi^{1/s}} \right)^{1+\delta}
\]
for any $\delta > 0$ (the implicit constants in \eqref{ineq:Batch2} will also depend on $\delta$). 
The use of regularity is because mixing estimates require the source to have {\em some} positive regularity in order to get quantitative, $\kappa$-independent decay rates in negative Sobolev spaces. It is unclear if Batchelor's law as stated in Theorem \ref{thm:Batch} is still true if $b$ has no more than $L^2$ regularity.
If $b$ takes values in any $\Lambda_C = \set{b \in H^s:\, C^{-1}\norm{b}_{L^2} \leq \norm{b}_{H^s} \leq C\norm{b}_{L^2}}$, then both implicit constants in \eqref{eq:PSIR} are proportional to $\chi$ and $N_0$ depends only on $C$.
\end{remark}

Theorem \ref{thm:Batch} implies the following uniform in $\kappa$ estimate:
\begin{corollary}\label{cor:unifHdeltaEstBatch}
For all $s > 0$ and $p\geq 1$ the following holds
\begin{align}\label{eq:H-s-uniform-est}
\sup_{\kappa}\EE_{\mu^\kappa} \|g\|_{H^{-s}}^{2p} \lesssim_{p,s} 1.
\end{align}
\end{corollary}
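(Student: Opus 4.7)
\textbf{Proof plan for Corollary \ref{cor:unifHdeltaEstBatch}.}

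The plan is to split $g$ into a low-frequency part $\Pi_{\leq M}g$ and a high-frequency tail $(1-\Pi_{\leq M})g$, where $M := \kappa^{-1/2}$ is exactly the cutoff that separates the two regimes appearing in Theorem~\ref{thm:Batch}. For the high-frequency tail, I would use that for any $t > 0$, elementary Fourier considerations give $\|(1-\Pi_{\leq M})g\|_{H^{-s}}^2 \leq M^{-2(s+t)} \|g\|_{H^t}^2$. Choosing $t \in (0, \sigma - \tfrac{3d}{2}-1)$ and inserting the regularity estimate \eqref{ineq:preg}, one obtains
\[
\bigl(\EE_{\mu^\kappa} \|(1-\Pi_{\leq M})g\|_{H^{-s}}^{2p}\bigr)^{1/p} \lesssim_{p,t} M^{-2(s+t)} \cdot \kappa^{-t}\abs{\log\kappa} = \kappa^{s}\abs{\log\kappa},
\]
which tends to $0$ as $\kappa \to 0$ and is therefore uniformly bounded.

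For the low-frequency part, I would perform a dyadic decomposition. Write $j_* := \lceil \log_2 M \rceil$ and
\[
\|\Pi_{\leq M} g\|_{H^{-s}}^2 \;\lesssim\; \sum_{j=0}^{j_*} 2^{-2js} \|\Pi_{\leq 2^{j+1}} g\|_{L^2}^2,
\]
using that each dyadic shell contributes at most $\|\Pi_{\leq 2^{j+1}}g\|_{L^2}^2$. Taking $L^p$ moments and applying Minkowski's inequality (in the form $(\EE(\sum_j Y_j)^p)^{1/p} \leq \sum_j (\EE Y_j^p)^{1/p}$ for nonnegative $Y_j$) yields
\[
\bigl(\EE_{\mu^\kappa} \|\Pi_{\leq M} g\|_{H^{-s}}^{2p}\bigr)^{1/p} \;\leq\; \sum_{j=0}^{j_*} 2^{-2js} \bigl(\EE_{\mu^\kappa} \|\Pi_{\leq 2^{j+1}} g\|_{L^2}^{2p}\bigr)^{1/p}.
\]
For the finitely many shells with $2^{j+1} \leq N_0$, the corresponding moments are bounded by $\EE_{\mu^\kappa}\|\Pi_{\leq N_0} g\|_{L^2}^{2p}$, which by the upper bound of \eqref{eq:PSIR} (applied at $N = N_0$) is an absolute constant. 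For shells with $N_0 < 2^{j+1} \leq M$, Theorem~\ref{thm:Batch} gives $(\EE \|\Pi_{\leq 2^{j+1}}g\|_{L^2}^{2p})^{1/p} \lesssim_p \log 2^{j+1} \lesssim j+1$, with constants independent of $\kappa$. Hence
\[
\bigl(\EE_{\mu^\kappa} \|\Pi_{\leq M} g\|_{H^{-s}}^{2p}\bigr)^{1/p} \;\lesssim_{p,s}\; \sum_{j=0}^{\infty} 2^{-2js}(j+1) \;<\; \infty,
\]
the series converging since $s > 0$, uniformly in $\kappa$. Combining the two parts via the triangle inequality in $H^{-s}$ and $(a+b)^{2p}\lesssim_p a^{2p}+b^{2p}$ yields \eqref{eq:H-s-uniform-est}.

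The only mild subtlety is ensuring that the $\log N$ growth in \eqref{eq:PSIR} is tame enough to be absorbed by the $2^{-2js}$ weights and that the high-frequency remainder has enough decay to kill the $\kappa^{-s}$ factor from \eqref{ineq:preg}; both are transparent given the sharp choice $M=\kappa^{-1/2}$. There is no essential difficulty, and the proof is a short corollary of the two estimates in Theorem~\ref{thm:Batch}.
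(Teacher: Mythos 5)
Your argument is correct. The core of it — dyadic decomposition of the $H^{-s}$ norm, Minkowski's inequality to pull the $p$-th moment inside the sum, and the summability of $\sum_j 2^{-2js}(j+1)$ — is exactly the paper's proof. The one place you diverge is in handling frequencies above $\kappa^{-1/2}$: you split off the tail $(1-\Pi_{\leq \kappa^{-1/2}})g$ and control it with the dissipative-range estimate \eqref{ineq:preg}, gaining the factor $\kappa^{s}\abs{\log\kappa}$. The paper avoids this step entirely by observing (as recorded in a remark following Theorem \ref{thm:Batch}, and as is clear from the statement of Lemma \ref{lem:PSIRUpBd}) that the \emph{upper} bound $\left(\EE_{\mu^\kappa}\|\Pi_{\leq N}g\|_{L^2}^{2p}\right)^{1/p}\lesssim_p \log N$ holds for all $2\leq N<\infty$, not merely in the inertial range $N\leq\kappa^{-1/2}$; the single dyadic sum over all $j\geq 0$ then closes the argument in one line. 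Your route is a touch longer but only uses \eqref{eq:PSIR} within its literally stated range, which is a defensible reading of the theorem; the only cosmetic caveat is that your choice of $t\in(0,\sigma-\tfrac{3d}{2}-1)$ presumes that interval is nonempty, whereas taking $t=0$ (i.e., the plain $L^2$ bound $\lesssim\abs{\log\kappa}$) works just as well and avoids the issue.
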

\begin{proof}
Note that by Minkowski's inequality, (denoting $\Pi_N = \Pi_{\leq N} - \Pi_{\leq N/2}$ for $N \geq 2$ and $\Pi_1 = \Pi_{\leq 1}$), 
\begin{align}
\left(\EE
_{\mu^\kappa}\|g\|_{H^{-s}}^{2p}\right)^{1/p} \approx \left(\EE_{\mu^\kappa}\!\!\left(\sum_{j \geq 0} 2^{-2s j} \|\Pi_{2^{j}} g\|_{L^2}^{2}\right)^p \right)^{1/p} \lesssim \sum_{j \geq 0} 2^{-2s j} \left(\EE_{\mu^\kappa}\|\Pi_{2^j} g\|_{L^2}^{2p}\right)^{1/p} \,, 
\end{align}
which is bounded $\lesssim 1$ by \eqref{eq:PSIR}.
\end{proof}
\begin{remark}
Note that Theorem \ref{thm:Batch} (with $s=0$) implies the logarithmic divergence in $L^2$: 
\begin{align}\label{eq:L2-diverge}
\EE_{\mu^\kappa} \|g\|_{L^2}^2 \approx \abs{\log \kappa}. 
\end{align}
\end{remark}
\begin{remark}
  The cumulative power spectrum estimate in \eqref{eq:PSIR} comes in two parts.
 The lower bound is \emph{easier} to establish, and contains relatively little dynamical information: we show in Section \ref{subsec:lowerBoundBatch} that it follows from the fact that Lipschitz velocity fields cannot mix a scalar faster than exponential (provided that one has suitable exponential moment estimates to control large deviations). The upper bound on the other hand is much more difficult. It makes crucial use of the optimal, almost-sure uniform in $\kappa$ mixing estimates obtained in our recent work \cite{BBPS19II}, which in turn depends heavily on the results and methods of our earlier works on mixing and Lagrangian chaos \cite{BBPS18,BBPS19I}. Without these results, we would not be able to obtain an upper bound. See below in Section \ref{subsec:mechanismBatch} for more discussion on this sequence of works.
\end{remark}
\begin{remark}
The proof of the upper bound in \eqref{eq:PSIR} holds directly for all $N < \infty$. By itself, the estimates on the cumulative power spectrum in \eqref{eq:PSIR} are not precise enough to localize the dissipative range. Indeed, it is easy to check that \eqref{eq:PSIR} implies for all $\zeta > 0$,
\begin{align}
\left(\EE_{\mu^\kappa}\|\Pi_{\leq N} g\|_{L^2}^{2p}\right)^{1/p} & \approx_{\zeta,p} \log N \quad\quad \textup{ for all }  N_0 \leq N \leq \kappa^{-\zeta}.  \label{eq:PSIRfail}
\end{align}
However, \eqref{eq:L2Balance} provides the additional information needed to localize the dissipative scale to approximately $\kappa^{-1/2}$, as one should expect from parabolic regularity. Indeed, 
\begin{align}
\EE_{\mu^\kappa}\|\left(I - \Pi_{\leq N}\right)g\|_{L^2}^2  \leq \frac{1}{N^2}\EE_{\mu^\kappa}\|\grad g\|_{L^2}^2 = \frac{\chi}{2\kappa N^2} \, .
\end{align}
The estimate \eqref{ineq:preg} provides more precise high frequency moment control, albeit with a logarithmic deviation from \eqref{eq:L2Balance}.
\end{remark}

\begin{remark} \label{rmk:CPS}
The cumulative power spectrum estimate \eqref{eq:PSIR} is a little weaker than \eqref{eq:Batch1} or a dyadic-shell averaged version (i.e. $\EE\norm{\Pi_{2^j} g^\kappa}_{L^2}^2 \approx 1$).  
However, \emph{if} $\EE_{\mu^\kappa} \abs{\hat{g}(k)^2} \approx F(k)$ for $F(k)$ a monotone function, \emph{then} \eqref{eq:PSIR} implies that
$F(k) \approx \abs{k}^{-d}$ (and analogously for any dyadic-shell averaged version). In a dyadic-shell averaged version, the only kind of violations of Batchelor's law not ruled out by \eqref{eq:PSIR} are if some dyadic shells have too little mass and/or a sparse set (i.e. zero asymptotic density) of dyadic shells have have too much $L^2$ mass (but never more than $\log 2^j$). The pointwise version in \eqref{eq:Batch1} could also be violated if the $L^2$ mass was not distributed evenly enough in angle. 
This is the case, for example, for discrete-time pulsed-diffusion models of advection-diffusion by CAT maps (see e.g., \cite{IF19}).
Bridging this gap is the subject of a possible future line of research: see Section \ref{sec:Future} below. 
\end{remark} 

\subsubsection{Yaglom's law}\label{subsubsec:Yaglom}

Yaglom's law was predicted in 1949 in \cite{Yaglom49} for all passive scalar turbulence regimes, and consists of a reformulation of the constant scale-by-scale $L^2$ flux characteristic of anomalous dissipation. This is analogous to scaling laws for other turbulent systems, e.g., the Kolmogorov $-4/5$ law for 3D Navier-Stokes (see \cite{Frisch1995,Nie1999,BCZPSW18} and the references therein). 
In \cite{BBPS18}, we showed as a consequence of Lagrangian chaos that Yaglom's law holds over \emph{some} inertial range with an 
unspecified lower bound $\ell_\kappa$ with $\ell_\kappa \to 0$ as $\kappa \to 0$.
Theorem \ref{thm:Batch} allows us to 
show that this law essentially holds over the entire inertial range $\ell \gtrsim \kappa^{1/2}$.
The proof is an easy application of the methods of \cite{BCZPSW18,BBPS18} and the a priori estimate \eqref{eq:L2-diverge} from Theorem \ref{thm:Batch} and so is omitted for brevity. 
\begin{corollary}[Yaglom's law over sharp inertial range]\label{cor:yaglomFinite}
Denote the finite increment for $h \in \R^d$,
\begin{align}
\delta_h g(x) = g(x+h) - g(x). 
\end{align}
Let $\ell_\kappa > 0$ be such that $\ell_\kappa^{-1} = o\!\left((\kappa\abs{\log \kappa})^{-1/2}\right)$ as $\kappa \to 0$. Then, 
\begin{align}
\lim_{\ell_I \to 0} \limsup_{\kappa \to 0} \!\!\sup_{\ell \in (\ell_\kappa,\ell_I)} \abs{\frac{1}{\ell}\EE_{\mu^\kappa} \!\fint_{\T^d} \fint_{\S^{d-1}}\! \abs{\delta_{\ell n} g}^2 \delta_{\ell n} u \cdot n\, \dee n \dx + \frac{2}{d}\chi} = 0. \label{ineq:Yag}
\end{align}
\end{corollary}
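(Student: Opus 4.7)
The plan is to derive a Kármán-Howarth-Monin (KHM) identity for the stationary two-point correlation of $g$, which reduces Yaglom's law to an exact pointwise-in-$\ell$ identity whose residual error is controlled by the logarithmic $L^2$ bound \eqref{eq:L2-diverge} from Theorem \ref{thm:Batch}. This strategy follows the methodology of \cite{BCZPSW18, BBPS18}; the novelty is that \eqref{eq:L2-diverge} sharpens the inertial range to essentially $\ell \gtrsim \kappa^{1/2}$ up to logarithmic corrections. Setting $C_g(h) := \EE_{\mu^\kappa} \fint g(x) g(x+h) \dx$, $R_b(h) := \fint b(x) b(x+h) \dx$ (so $R_b(0) = \chi$), and letting $\Phi(h) := \EE_{\mu^\kappa} \fint |\delta_h g|^2 \delta_h u \, \dx$ denote the Yaglom flux, Itô's formula applied to $g_t(x) g_t(x+h)$ combined with stationarity yields for each $h \in \R^d$
\begin{align*}
\nabla_h \cdot \Phi(h) = -4\kappa \Delta_h C_g(h) - 2 R_b(h),
\end{align*}
where translating the nonlinear transport term into $\nabla_h \cdot \Phi(h)$ uses only incompressibility of $u$ and integration by parts in $x$.

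Next, integrate this identity over the ball $B_\ell \subset \R^d$ and apply the divergence theorem to each side; the first RHS term becomes the radial derivative of $\bar C_g(\ell) := \fint_{\S^{d-1}} C_g(\ell n)\,\dee n$ by the chain rule. Dividing by $|B_\ell|$ and then by $\ell$ gives
\begin{align*}
\frac{1}{\ell} \fint_{\T^d} \fint_{\S^{d-1}} \EE_{\mu^\kappa} |\delta_{\ell n} g|^2 \delta_{\ell n} u \cdot n \,\dee n\, \dx + \frac{2\chi}{d} = -\frac{4\kappa}{\ell} \partial_\ell \bar C_g(\ell) + \frac{2}{d} \bigl(\chi - \bar R_b(\ell)\bigr),
\end{align*}
where $\bar R_b(\ell) := |B_\ell|^{-1} \int_{B_\ell} R_b \,\dee h$. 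Since $b$ is smooth, $|\chi - \bar R_b(\ell)| \lesssim \ell^2 \|b\|_{H^2}^2$ uniformly in $\kappa$, so the source correction vanishes as $\ell_I \to 0$.

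It remains to bound the dissipation correction $(4\kappa/\ell) \partial_\ell \bar C_g(\ell)$. Applying Cauchy-Schwarz first in $x$ (pointwise in $n$) and then in expectation gives
\begin{align*}
|\partial_\ell \bar C_g(\ell)| \leq \bigl(\EE_{\mu^\kappa}\|g\|_{L^2}^2\bigr)^{1/2} \bigl(\EE_{\mu^\kappa}\|\nabla g\|_{L^2}^2\bigr)^{1/2}.
\end{align*}
Combining the logarithmic $L^2$ estimate $\EE_{\mu^\kappa}\|g\|_{L^2}^2 \lesssim |\log \kappa|$ from \eqref{eq:L2-diverge} with the energy balance $\kappa \EE_{\mu^\kappa}\|\nabla g\|_{L^2}^2 = \chi/2$ from \eqref{eq:L2Balance} yields $\kappa |\partial_\ell \bar C_g(\ell)| \lesssim (\kappa |\log \kappa|)^{1/2}$, whence
\begin{align*}
\sup_{\ell \in (\ell_\kappa, \ell_I)} \frac{4\kappa}{\ell} |\partial_\ell \bar C_g(\ell)| \lesssim \frac{(\kappa |\log \kappa|)^{1/2}}{\ell_\kappa} = o(1) \quad \text{as } \kappa \to 0
\end{align*}
by the hypothesis $\ell_\kappa^{-1} = o\bigl((\kappa |\log \kappa|)^{-1/2}\bigr)$. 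Taking $\limsup$ in $\kappa$ and then $\ell_I \to 0$ closes the estimate. The subtle point is that the dissipation correction emerges as the radial derivative of a \emph{single} two-point correlation rather than a quadratic expression in $\nabla g$, so Cauchy-Schwarz can pair the log-divergent $L^2$ control on $g$ with the much larger energy balance for $\kappa^{1/2}\nabla g$; the resulting $(\kappa|\log\kappa|)^{1/2}$ scaling precisely matches the threshold $\ell_\kappa$ in the statement.
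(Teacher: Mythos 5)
Your proof is correct: the KHM-type identity $\nabla_h\cdot\Phi = -4\kappa\Delta_h C_g - 2R_b$, the sphere/ball averaging, and the key step of pairing the logarithmic bound \eqref{eq:L2-diverge} with the energy balance \eqref{eq:L2Balance} via Cauchy--Schwarz to get $\kappa|\partial_\ell \bar C_g| \lesssim (\kappa|\log\kappa|)^{1/2}$ all check out, and the constants match the statement. The paper omits this proof, saying only that it is an application of the methods of \cite{BCZPSW18,BBPS18} together with \eqref{eq:L2-diverge}; your argument is exactly that intended route.
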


Note that by \eqref{eq:L2-diverge}, statistically stationary solutions are blowing up in $L^2$, 
and so \eqref{ineq:Yag} is only possible with the assistance of a large amount of cancellations.

\begin{remark}
  For 3D Navier-Stokes, the Kolmogorov $-4/5$ law together with statistical self-similarity formally predicts the Kolmogorov $-5/3$ power spectrum ($\abs{k}^{2}\EE \abs{\hat{u}(k)}^2 \approx \abs{k}^{-5/3}$).
 It is unclear at the moment how intermittency may or may not add corrections \cite{kaneda2003energy, AnselmetEtAl1984, Frisch1995, SreeniKail93,MW96}.  While passive scalar turbulence in the Batchelor regime is expected to display intermittency \cite{PS90, shraiman1998anomalous, HZ94, ott1988chaotic, MK99}, we nevertheless do not see intermittency corrections to Batchelor's prediction on the power spectrum, at least in the case where $\nu$ is fixed.
Heuristically, one might guess this from Yaglom's law \eqref{ineq:Yag}: the regularity of the velocity implies that \eqref{ineq:Yag} formally scales like the second order structure function $\EE_{\mu^\kappa}\|\delta_\ell g\|_{L^2}^2$ which formally scales as the power spectrum by the Wiener-Khinchin theorem. Hence, Yaglom's law suggests a certain rigidity to Batchelor-regime passive scalars that is not present in many other `turbulent' systems.
\end{remark}

\subsubsection{The vanishing diffusivity limit} \label{sec:ADOC}
For many turbulent systems consisting of a weakly damped system subjected to forcing, it is expected
to be able to pass to the zero-damping limit and obtain statistically stationary solutions for the zero-damping problem  \cite{Nazarenko11,FGHV16,MSE07}. The limiting regime is sometimes referred to as \emph{ideal turbulence}
\cite{eyink2018review}; in this limit, one expects weak solutions in very low regularity spaces with a non-vanishing scale-by-scale flux of conserved quantities through \emph{all} sufficiently small scales.
A prominent line of mathematical research in this direction is the work on Onsager's conjecture for the 3D Euler equations \cite{Eyink94,Isett18,DLS12,DLS13,eyink2018review,CET}, which seeks to identify low-regularity 
solutions to the Euler equations capable of dissipating kinetic energy as one expects of
ideal turbulent solutions to 3D Euler. Work on weak turbulence in dispersive equations can also be considered to be in a type of `ideal turbulence limit' \cite{FGH16,BGHS19}.

A consequence of our results on the Batchelor spectrum is a realization of the ideal turbulence program
for passive scalar turbulence. Indeed, in contrast with contemporary advances on Onsager criticality for 
3d Euler (capable only of generating specific solutions to 3d Euler with nonvanishing flux), in our setting
we are able to exhibit probability measures supported in low-regularity spaces, typical samples of which
exhibit the desired scale-by-scale flux across all sufficiently small scales.\footnote{See \cite{FGHV16,MSE07} for examples of simplified shell models where the inviscid limit to rough, weak solutions dissipating constant energy through the inertial range was successfully carried out in the statistically stationary regime.} 

For this, we consider weak-$*$ subsequential limits of the sequence $\{ \mu^\kappa\}$ of stationary measures for the 
$(u_t, g_t^\kappa)$ process as $\kappa \to 0$, yielding (possibly more than one) stationary measures $\mu^0$ for the
zero-diffusivity process $(u_t, g_t^0)$ governed by
\begin{align}\label{eq:kappa=0}
\begin{aligned}
	&\partial_t u_t + u_t \cdot \grad u_t + \grad p_t - \nu \Delta u_t = Q \dot{W}_t \, , \\
	&\partial_t g_t^0 + u_t\cdot \nabla g^0_t = b \dot{\beta}_t \, .
\end{aligned}
\end{align}
Existence of weak-$*$ limits in $H^{-s}, s > 0$ follows from Prokhorov's theorem and the
 $\kappa$-uniform moment estimates in Corollary \ref{cor:unifHdeltaEstBatch}; as a result, 
the limiting measures $\mu^0$ are supported on some low-regularity subspace of $H^- = \cap_{s > 0} H^{-s}$. 
See Section \ref{subsec:tightness} for more details. The following summarizes the 
basic properties of this construction.

\begin{theorem}[Vanishing dissipation limit] \label{thm:kappato0}
There exists a subsequence $\{\mu^{\kappa_n}\}$ and a limit measure $\mu^0$ on $\Hbf \times H^{-}$ such that for each $s >0$, $\mu^{\kappa_n}$ converges weakly to $\mu^0$ as a measure on $\Hbf \times H^{-s}$. Moreover, any limit point $\mu^0$ is a stationary measure for $(u_t,g_t^0)$ and the limit satisfies Batchelor's law over an infinite inertial range: for $N_0$ as in Theorem \ref{thm:Batch} there holds 
\begin{equation}\label{eq:infinite-Batchelor}
	\left(\E_{\mu^0} \|\Pi_{\leq N}g\|_{L^2}^{2p}\right)^{1/p} \approx_p \log{N} \quad \text{for all }N \geq N_0,
\end{equation}
and moreover, there is non-vanishing $L^2$ flux: 
\begin{equation}\label{eq:AD212}
\EE_{\mu^0}\brak{\Pi_{\leq N}(ug), \grad \Pi_{\leq N}g } = -\frac{1}{2} \|\Pi_{\leq N} b\|_{L^2}^2. 
\end{equation}
\end{theorem}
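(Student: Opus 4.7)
The plan proceeds through three main steps. For the first claim, compactness and the Batchelor spectrum for $\mu^0$ follow from standard weak-convergence arguments. The $\Hbf$-marginal of $\mu^\kappa$ is the stationary measure of the fluid process (Proposition~\ref{prop:StatMes}) and is independent of $\kappa$, hence tight. Corollary~\ref{cor:unifHdeltaEstBatch} gives $\sup_\kappa \EE_{\mu^\kappa}\|g\|_{H^{-s/2}}^2 \lesssim_{s} 1$ for every $s>0$, and the compact embedding $H^{-s/2}\hookrightarrow H^{-s}$ implies tightness of the scalar marginal. Prokhorov's theorem plus a diagonal extraction over $s = 1/k$ produces the subsequence $\mu^{\kappa_n}\to\mu^0$ weakly in $\Hbf\times H^{-s}$ for every $s>0$ simultaneously. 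For \eqref{eq:infinite-Batchelor}, note that $g\mapsto\|\Pi_{\leq N}g\|_{L^2}^{2p}$ is continuous on $H^{-s}$ since $\Pi_{\leq N}$ is a bounded finite-rank operator; applying Theorem~\ref{thm:Batch} at exponent $2(p+1)$ provides uniform integrability, and for each fixed $N\geq N_0$ we eventually have $N\leq\kappa_n^{-1/2}$, so Theorem~\ref{thm:Batch} yields $\EE_{\mu^{\kappa_n}}\|\Pi_{\leq N}g\|_{L^2}^{2p}\approx_p (\log N)^p$ uniformly in $n$. Passing to the limit along the subsequence gives \eqref{eq:infinite-Batchelor}.

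The main obstacle is stationarity of $\mu^0$ under the zero-diffusivity dynamics \eqref{eq:kappa=0}. I would work at the level of path space: each process $(u_t,g_t^{\kappa_n})_{t\in[0,T]}$ started from $\mu^{\kappa_n}$ is stationary, and I would establish tightness in $C([0,T];\Hbf\times H^{-s})$ via the weak-form identity
\[
\langle g_t^\kappa - g_r^\kappa, f\rangle = \int_r^t \langle g_\tau^\kappa, u_\tau\cdot\nabla f\rangle\,\dee\tau + \kappa\int_r^t\langle g_\tau^\kappa, \Delta f\rangle\,\dee\tau + \langle b,f\rangle(\beta_t-\beta_r)
\]
for $f\in C^\infty(\T^d)$, combined with uniform fractional-time regularity estimates in a slightly weaker Sobolev norm and standard tightness criteria. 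The hard part is passing to the limit in the advection term: I would use joint continuity of $(u,g)\mapsto\langle g, u\cdot\nabla f\rangle$ on $\Hbf\times H^{-s}$ for $s$ small, which holds because $\Hbf\hookrightarrow C^3$, together with the uniform moments from Corollary~\ref{cor:unifHdeltaEstBatch}. The $\kappa\Delta$ term vanishes uniformly as $\kappa\to 0$ since $f$ is smooth. Any subsequential path-space limit then solves \eqref{eq:kappa=0} in weak form and is stationary by construction, with time-marginal $\mu^0$; the latter is therefore invariant for $(u_t,g_t^0)$.

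For the flux identity \eqref{eq:AD212}, applying It\^o's formula to $\|\Pi_{\leq N}g_t^\kappa\|_{L^2}^2$ and taking expectations in the stationary regime yields
\[
0 = -2\EE_{\mu^\kappa}\langle\Pi_{\leq N}(u\cdot\nabla g),\Pi_{\leq N}g\rangle - 2\kappa\EE_{\mu^\kappa}\|\nabla\Pi_{\leq N}g\|_{L^2}^2 + \|\Pi_{\leq N}b\|_{L^2}^2.
\]
Incompressibility gives $u\cdot\nabla g=\nabla\cdot(ug)$, and integrating by parts inside the projection reduces the advection term to $-\langle\Pi_{\leq N}(ug),\nabla\Pi_{\leq N}g\rangle$. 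For fixed $N$ the dissipation satisfies $\kappa\EE_{\mu^\kappa}\|\nabla\Pi_{\leq N}g\|_{L^2}^2\leq\kappa N^2\EE_{\mu^\kappa}\|g\|_{L^2}^2\lesssim\kappa N^2|\log\kappa|\to 0$ by \eqref{eq:L2-diverge}. The bilinear functional $(u,g)\mapsto\langle\Pi_{\leq N}(ug),\nabla\Pi_{\leq N}g\rangle$ is continuous on $\Hbf\times H^{-s}$ and is dominated by $C_N\|u\|_{L^\infty}\|g\|_{H^{-s}}^2$, which has uniform moments under $\mu^{\kappa_n}$ by Corollary~\ref{cor:unifHdeltaEstBatch}. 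Weak convergence together with uniform integrability then gives \eqref{eq:AD212}.
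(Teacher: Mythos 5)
Your compactness and diagonal-extraction argument, and the deduction of \eqref{eq:infinite-Batchelor} from continuity of the finite-rank projections plus uniform higher moments, coincide with the paper's proof in Section \ref{subsec:tightness}. Your derivation of the flux identity \eqref{eq:AD212} --- It\^o's formula at $\kappa>0$, killing the dissipation term via $\kappa N^2|\log\kappa|\to 0$, and passing to the limit in the continuous bilinear flux functional using weak convergence plus the uniform moments of Corollary \ref{cor:unifHdeltaEstBatch} --- is a sound minor variant of Section \ref{subsec:Non-vanishing-flux}, which instead applies It\^o's formula directly to the $\kappa=0$ process and uses stationarity of $\mu^0$; both routes work.

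The stationarity step is where you genuinely diverge, and where your sketch has real gaps. The paper works at the level of the one-time Markov semigroup: it splits $\int P_1^{\kappa_n}\phi\,\dee\mu^{\kappa_n}-\int P_1^0\phi\,\dee\mu^0$ into a term controlled by the pathwise comparison $\E\|g_1^\kappa-g_1^0\|_{H^{-1}}\lesssim V(u)\,\kappa\int_0^1\E\|g_s^\kappa\|_{H^1}\ds\lesssim\sqrt{\kappa}$ (Lemma \ref{lem:firstTermZeroSparrow} plus the energy balance), and a term $\int P_1^0\phi\,(\dee\mu^{\kappa_n}-\dee\mu^0)$ that cannot be handled by weak convergence alone because $P_1^0$ is \emph{not Feller} on $\Hbf\times H^{-1}$; this is resolved by the restricted Feller estimate (Lemma \ref{lem:restricted-feller}) combined with mollification $g\mapsto T_\delta g$ and the uniform $H^{-s}$ moments. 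Your path-space martingale-solution approach sidesteps the Feller obstruction by never composing with the $\kappa=0$ semigroup, and it can in principle be made to work, but two steps are missing. First, tightness in $C([0,T];\Hbf\times H^{-s})$ requires a uniform-in-$\kappa$ bound on $\E\sup_{t\leq T}\|g_t^\kappa\|_{H^{-s'}}^q$ for some $s'<s$; Corollary \ref{cor:unifHdeltaEstBatch} and stationarity only give fixed-time (hence time-integrated) moment control, so you would need to rerun the mild-formulation and Burkholder--Davis--Gundy arguments of Section \ref{sec:Batchelor-Proof} to obtain a supremum-in-time estimate. Second, and more substantively, your final sentence ``the latter is therefore invariant for $(u_t,g_t^0)$'' identifies the stationary path-space limit with the law of the \emph{specific} Markov process built from $S_t^0(\omega,u)$ in Lemma \ref{lem:H-1-semigroup}. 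This requires (i) identifying the limiting driving processes as Brownian motions adapted to the right filtration (the martingale-problem step), and (ii) a uniqueness theorem for $H^{-s}$-valued weak solutions of the stochastic transport equation with $C^3$ velocity, so that every weak solution is given by the mild/duality formula. Uniqueness does hold here by testing against the backward Lagrangian flow, but it is precisely the analogue of the stability issue that the paper's restricted Feller lemma is designed to handle, and it must be stated and proved rather than assumed; without it you have only shown that $\mu^0$ is stationary for \emph{some} weak solution concept of \eqref{eq:kappa=0}.
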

\begin{remark}
Note that the absence of the $\kappa \Delta$ dissipation in the transport equation \eqref{eq:kappa=0} makes the existence of non-trivial stationary solutions far from obvious. 
The Markov process $(u_t, g_t^0)$ has bad regularity properties on $\Hbf \times H^{-s}$ (it is not Feller) due to a lack of stability in the transport equation with respect
 to $\Hbf$ perturbations of the velocity field, and so a Krylov-Bogoliubov argument using the uniform mixing bound does not apply. See Section \ref{sec:restrictedFeller} for more discussion. 
\end{remark}

\begin{remark}
The non-vanishing flux \eqref{eq:AD212} is analogous to Yaglom's law \eqref{ineq:Yag} for the $\kappa = 0$ equation, in that both results say something about the the constancy of the $L^2$ flux in the $\kappa \to 0$ limit. However, as we show below in Theorem \ref{thm:Ocrit}, $\mu^0$ generic $g$ are not even integrable on $\T^d$, and so we are unsure how to pass the $\kappa \to 0$ the limit in Yaglom's law directly.
\end{remark}

\subsubsection{Irregularity of $\kappa = 0$ stationary statistics}\label{subsubsec:oCritSubsc}

As mentioned previously, the limiting stationary statistics associated to $\mu^0$ are very irregular. Indeed, the uniform bound \eqref{eq:H-s-uniform-est} used to extract the limit $\mu^0$ suggests that the measure is concentrated, at best, in $H^{-} = \cap_{s > 0} H^{-s}$, while the limiting version of Batchelor's law \eqref{eq:infinite-Batchelor} implies that 
\begin{equation}\label{eq:L^2-moment-blowup}
	\E_{\mu^0}\|g\|_{L^2}^{2} = \infty \, , 
\end{equation}
i.e., $\mu^0$ cannot give second moments to functions in $L^2$. The primary reason for this irregularity is that, in the absence of a Laplacian, stationary solutions to the zero diffusivity equation
\begin{equation}\label{eq:kappa0-transport}
	\partial_t g_t^0 + u_t\cdot\nabla g_t^0 = b\dot{\beta}_t
\end{equation}
must ``anomalously'' dissipate the input from the noise through the advection term. Since the velocity field $(u_t)$ is regular we cannot rely on roughness of the velocity field to dissipate and must rely solely on the mixing properties of $(u_t)$. Due to the regularity of $(u_t)$, one can prove that the $\kappa = 0$ transport equation \eqref{eq:kappa=0} conserves the $L^1$ norm of $g^0_t$, and therefore one should expect that $L^1$ solutions to \eqref{eq:kappa0-transport} cannot dissipate. In fact, we are able to confirm that $\mu^0$ assigns zero mass to $L^1$ (see Section \ref{subsec:L1} for proof): 


\begin{theorem}[Limiting solutions are not $L^1$]\label{thm:Ocrit} Any limit measure $\mu^0$ from Theorem \ref{thm:kappato0} satisfies
\[
	\mu^0(\Hbf\times L^1)=0.
\]
\end{theorem}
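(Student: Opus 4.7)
The plan is to derive a contradiction between Birkhoff's ergodic theorem and the distributional stationarity of $\mu^0$, using an It\^o identity for a strictly convex functional of $g^0$. Suppose for contradiction that $\mu^0(\Hbf \times L^1) > 0$. The set $\Hbf \times L^1$ is both forward and backward invariant under the $(u_t, g^0_t)$ dynamics: the characteristic representation $g^0_t(\phi^t x) = g^0_0(x) + \int_0^t b(\phi^s x)\,\dee\beta_s$, combined with the measure preservation of $\phi^t$ and the fact that the stochastic integral lies in $L^2 \subset L^1$ almost surely, shows that $g^0_0 \in L^1$ if and only if $g^0_t \in L^1$ a.s. Conditioning $\mu^0$ on $\Hbf \times L^1$ therefore yields a stationary probability measure $\nu$ on $\Hbf \times L^1$, and it suffices to derive a contradiction under $\nu$.

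Next, fix a smooth, strictly convex $\phi : \R \to [0,\infty)$ with $\phi(0) = 0$, $\phi(r) \leq \abs{r}$, $\abs{\phi'} \leq 1$, and $0 < \phi''(r) \leq 1$ for all $r$; an explicit choice is $\phi(r) = \sqrt{1+r^2}-1$. A formal application of It\^o's formula to the functional $\int \phi(g)\,\dx$, together with the vanishing of the transport term $\int u_t \cdot \grad \phi(g^0_t)\,\dx = 0$ by incompressibility, yields the pathwise identity
\begin{equation}\label{eq:itophi}
\int \phi(g^0_t)\,\dx = \int \phi(g^0_0)\,\dx + M_t + D_t,
\end{equation}
where $M_t := \int_0^t \bigl(\int \phi'(g^0_s)\, b\,\dx\bigr)\,\dee \beta_s$ is a martingale with $\brak{M}_t \leq t\norm{b}_{L^1}^2$, and $D_t := \tfrac{1}{2} \int_0^t \int \phi''(g^0_s)\, b^2\,\dx\,\ds$. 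The main technical obstacle is the rigorous derivation of \eqref{eq:itophi} for $g^0_t$ only in $L^1$; we handle it by the DiPerna--Lions mollification technique: apply the classical It\^o formula to the smooth process $\rho_\epsilon \ast g^0_t$ and pass $\epsilon \to 0$, using that the transport commutator $[\rho_\epsilon, u_s \cdot \grad]\, g^0_s$ vanishes in $L^1$ when $u_s$ is Lipschitz (which it is since $u_s$ is smooth) and $g^0_s \in L^1$. The uniform boundedness of $\phi, \phi'$, and $\phi''$ lets the remaining terms pass to the limit by dominated convergence.

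The contradiction now emerges by comparing two asymptotics for $\tfrac{1}{t}\int \phi(g^0_t)\,\dx$. The observable $\Psi(u,g) := \tfrac{1}{2}\int \phi''(g)\,b^2\,\dx$ is bounded by $\tfrac{1}{2}\norm{b}_{L^2}^2$ and \emph{strictly} positive on $\Hbf \times L^1$, since $\phi''(r) > 0$ for every $r \in \R$ and $b \not\equiv 0$ has support of positive measure; in particular $\Psi \in L^1(\nu)$. Birkhoff's ergodic theorem applied to the Markov process $(u_t, g^0_t)$ therefore gives $D_t/t \to \E_\nu[\Psi \mid \mathcal{I}]$ $\nu$-almost surely, where $\mathcal{I}$ is the invariant $\sigma$-algebra, and this limit is $\nu$-a.s.\ strictly positive because $\Psi > 0$ pointwise. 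Combined with the martingale law of large numbers $M_t/t \to 0$ $\nu$-a.s.\ (from $\brak{M}_t \leq t\norm{b}_{L^1}^2$) and the trivial fact $\tfrac{1}{t}\int \phi(g^0_0)\,\dx \to 0$ $\nu$-a.s., the identity~\eqref{eq:itophi} yields $\tfrac{1}{t}\int \phi(g^0_t)\,\dx \to \E_\nu[\Psi \mid \mathcal{I}] > 0$ almost surely. On the other hand, stationarity of $\nu$ implies that $\int \phi(g^0_t)\,\dx$ has the same (a.s.\ finite, since $\phi(r) \leq \abs{r}$ and $g^0_t \in L^1$ a.s.) distribution for every $t \geq 0$, so $\tfrac{1}{t}\int \phi(g^0_t)\,\dx \to 0$ in probability. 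Since almost-sure convergence implies convergence in probability to the same limit, this yields the required contradiction.
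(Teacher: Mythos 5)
Your proposal is correct, and its first half coincides with the paper's argument: assume $\mu^0(\Hbf\times L^1)>0$, use forward invariance of $\Hbf\times L^1$ to condition $\mu^0$ to a stationary measure on $\Hbf\times L^1$, and establish the renormalized It\^{o} identity for $\int F(g_t^0)\dx$ with $F(z)=\sqrt{1+z^2}$ via DiPerna--Lions mollification and the vanishing commutator (your $\phi=F-1$ is the same computation, with $\phi''=F^{-3}$ and $\phi'=z/F$). Where you diverge is the endgame. The paper sidesteps the possible non-integrability of $\int F(g)\dx$ under the conditioned measure by composing with the bounded concave function $z\mapsto z/(1+\ep z)$, taking expectations, exploiting exact stationarity of the resulting bounded observables, and then sending $\ep\to 0$ by monotone convergence to force $\E\int b^2F(g)^{-3}\dx=0$. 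You instead work pathwise: the Birkhoff ergodic theorem gives $D_t/t\to\E_\nu[\Psi\mid\mathcal I]>0$ a.s.\ (strict positivity since $\Psi>0$ pointwise on $L^1$), the martingale strong law with $\brak{M}_t\lesssim t$ kills $M_t/t$, and stationarity forces $\tfrac1t\int\phi(g_t^0)\dx\to 0$ in probability, a contradiction. Both routes avoid needing moments of $\int F(g)\dx$; the paper's does so by truncating the observable, yours by replacing expectations with almost-sure time averages. Your argument is a clean instance of the general principle that a stationary process admits no nontrivial sublinear drift, and it additionally packages the $L^1$-invariance, the commutator estimate, and the quadratic-variation bound exactly as the paper does, so all the technical inputs you invoke are already justified there. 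The only cosmetic overreach is the claim of backward invariance of $\Hbf\times L^1$, which is true via the characteristic representation but not needed: forward invariance alone makes the conditioned measure stationary.
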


\begin{remark}
It is important to note that contrary to \eqref{eq:L^2-moment-blowup}, which only states that $\mu^0$ can't have second moments on $L^2$, Theorem \ref{thm:Ocrit} is much stronger in that it implies that $\mu^0$ generic scalars $g$ are ``true'' distributions in the sense that they do not take values in any space of integrable functions, regardless of moments.
\end{remark}

\begin{remark}
In general, DiPerna/Lions theory (see, e.g., \cite{DPL89}) predicts that \eqref{eq:kappa=0} conserves the $L^p$ norm of $g_t^0$ if the velocity field is in the Sobolev space $W^{1,q}(\T^d)$ for $q=p/(p-1)$. In general, velocity fields $(u_t)$ which are not Lipschitz do not propagate $L^1$ and should not be considered as belonging to the ``Batchelor Regime''. See \cite{MR4029736} for an example of a continuous $W^{1,p}$ velocity field which does not propagate $L^1$ and also \cite{DEGI19} for a related example.
\end{remark}

Theorem \ref{thm:Ocrit} is based on understanding the formal $L^1$ conservation law of the inviscid equation \eqref{eq:kappa0-transport}.
However, another clear question is to determine how irregular $g$ must be to have a non-vanishing $L^2$ flux as in \eqref{eq:AD212}, i.e. studying violation of the $L^2$ conservation law.
This is analogous to the problem for weak solutions of the 3D Euler equations known as {\em Onsager's conjecture} which has received a significant amount of mathematical attention in recent years (see e.g. \cite{I18,DLS13,DS09} and the review \cite{BV19}). In the context of the 3D Euler equations, Onsager's conjecture states that weak solutions can dissipate energy when not in $C_{t,x}^{0,1/3-}$ and cannot dissipate energy if in $C_{t,x}^{0,1/3+}$. The easier direction, that weak solutions of 3D Euler conserve energy with sufficient regularity, was studied previously in \cite{Eyink94,CET,CCFS08}. Specifically, in \cite{CCFS08} it was shown that the Onsager-critical space (the space which divides dissipative from conservative) is the Besov space $L^3_t B^{1/3}_{3,\infty}$.

In the spirit of \cite{CCFS08}, we show below that the space $L^2_t B^0_{2,\infty}$ is Onsager-type critical for the passive scalar turbulence problem, where $B^0_{2,\infty}\subseteq H^-$ is the Besov space of tempered distributions $f\in H^-$ such that
\[
\sup_{N\in \set{2^j: j \in \N_\ast}}\|\Pi_{N}f\|_{L^2} <\infty,
\] 
 (recall $\Pi_N = \Pi_{\leq N} - \Pi_{\leq N/2}$ is projection onto the dyadic shell of frequencies of length between $N$ and $N/2$ and $\N_\ast = \set{0} \cup \N$).
Time-integrability will be connected to moments with respect to the stationary measure $\mu^0$ since for statistically stationary processes, expectations of time averages are exactly equal to ensemble averages. 
Just past this critical boundary is the space $B_{2,c}^0 \subset B_{2, \infty}^0$ of distributions such that 
\begin{align}
\limsup_{j \to \infty} \norm{\Pi_{2^j} f}_{L^2} = 0 \, , 
\end{align}
which possess just barely enough regularity to rule out $L^2$ flux (See Section \ref{subsec:OcritFuckery}). 

In order to quantify regularity in $B^0_{2, \infty}$, we introduce generalized fractional derivative norms, which play the same role that the modulus of continuity does to generalize H\"older regularity.
\begin{definition}\label{def:gen-mult}
We call a multiplier $M:[0,\infty) \to [1,\infty)$ \emph{$B_{2,c}^0$-suitable} if
\begin{itemize}
\item[(i)] $M$ is monotone increasing and $\lim_{k \to \infty} M(k) = \infty$;
\item[(ii)] $M$ is (globally) Lipschitz continuous and $\forall C > 0$, $\exists c > 0$ such that if  $C^{-1}\abs{\ell} \leq \abs{k} \leq C \abs{\ell}$, 
\begin{align}
\abs{M(k) - M(\ell)} \leq \frac{c}{\sqrt{1+|k|^2}} M(\ell) \abs{k-\ell}. 
\end{align}
\end{itemize}
\end{definition}
Given such an $M$, we define the generalized Besov norm 
\begin{align}
\norm{f}_{B_{2,\infty}^M} := \sup_{N \in \set{2^j: j \in \N_\ast}}M(N) \norm{ \Pi_N f}_{L^2}. 
\end{align}
We show below in Lemma \ref{lem:quantRegBesov} that $f\in B_{2,c}^0$ if and only if $\norm{f}_{B^M_{2, \infty}} < \infty$ for some $B_{2,c}^0$-suitable $M$.
We prove the following theorem by contradiction with \eqref{eq:AD212} in Section \ref{subsec:OcritFuckery}.  
\begin{theorem}[Onsager-type criticality of $L^2_t B_{2,\infty}^0$] \label{thm:OcritB2inf}
Let $\mu^0$ be a stationary measure for the $\kappa = 0$ limit process extended to $\Hbf \times H^{-1}$. Then for every $p > 2$ and all $B_{2,c}^0$-suitable $M$, 
\[
	\EE_{\mu^0}\|g\|_{B_{2,\infty}^M}^p = + \infty.
\] 
\end{theorem}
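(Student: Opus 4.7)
I argue by contradiction with the constant-flux identity~\eqref{eq:AD212}. Assume $\EE_{\mu^0}\|g\|_{B^M_{2,\infty}}^p < \infty$ for some $p > 2$ and some $B^0_{2,c}$-suitable $M$. The goal is to show
\[
\EE_{\mu^0}\brak{\Pi_{\leq N}(ug),\, \grad \Pi_{\leq N}g} \;\longrightarrow\; 0 \quad \text{as } N \to \infty,
\]
which contradicts~\eqref{eq:AD212}, whose right side converges to $-\tfrac{1}{2}\|b\|_{L^2}^2 \neq 0$.

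The first step is to put the flux in commutator form. Since $\Pi_{\leq N}$ is a self-adjoint projection, $\grad \Pi_{\leq N}g$ is supported at modes $|k|\leq N$, and $\brak{u\,\Pi_{\leq N}g,\, \grad \Pi_{\leq N}g}=0$ by integration by parts using $\Div u = 0$, we obtain
\[
F_N := \brak{\Pi_{\leq N}(ug),\, \grad \Pi_{\leq N}g} \;=\; \brak{u\,\Pi_{>N}g,\, \grad \Pi_{\leq N}g}.
\]
The heart of the proof is then an almost-sure estimate $|F_N| \leq \epsilon(N)\,\Psi(u)\,\|g\|_{B^M_{2,\infty}}^2$, where $\epsilon(N)\to 0$ as $N\to\infty$ and $\Psi(u)$ has moments of every order under the $\Hbf$-marginal of $\mu^0$ (which is the invariant measure of the (hyper)viscous stochastic Navier-Stokes and has finite moments of all orders in $H^\sigma$). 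This bound is produced by dyadically decomposing $\Pi_{>N}g = \sum_{L>N}\Pi_Lg$ and $\Pi_{\leq N}g = \sum_{K\leq N}\Pi_Kg$: well-separated blocks ($L\gg K$ or $L\gg N$) are summed and shown small using the smoothness $u\in H^\sigma$ with $\sigma>\tfrac{d}{2}+3$ (each dyadic block $\|\Pi_K u\|_{L^\infty} \lesssim K^{d/2 - \sigma}\|u\|_{H^\sigma}$), while the near-boundary pairs $K\sim L\sim N$ are controlled using $\|\Pi_Kg\|_{L^2} \leq M(K)^{-1}\|g\|_{B^M_{2,\infty}}$ combined with the antisymmetry $\brak{uf_1,\grad f_2}=-\brak{uf_2,\grad f_1}$ afforded by $\Div u=0$.

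Given this pointwise bound, the contradiction follows from Hölder. Taking expectation and using conjugate exponents $p/2$ and $q := p/(p-2) < \infty$ (the finiteness of $q$ is exactly where $p > 2$ is used),
\[
\EE_{\mu^0}|F_N| \;\leq\; \epsilon(N)\, \bigl(\EE_{\mu^0}\Psi(u)^q\bigr)^{1/q}\bigl(\EE_{\mu^0}\|g\|_{B^M_{2,\infty}}^p\bigr)^{2/p} \;\longrightarrow\; 0,
\]
where the first expectation is finite by regularity of the Navier-Stokes marginal and the second by hypothesis. Combining with~\eqref{eq:AD212} yields the desired contradiction. The main obstacle is the sharpness of the commutator estimate: a naive dyadic bound on the near-boundary interactions yields only $\epsilon(N)\lesssim NM(N)^{-2}$, which vanishes only if $M(N)\gg N^{1/2}$. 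Since $M$ may grow arbitrarily slowly (subject only to $M(N)\to\infty$ and the Lipschitz condition), uniform decay of $\epsilon(N)$ across all $B^0_{2,c}$-suitable $M$ plausibly requires a Constantin-E-Titi-style second-order Taylor expansion of the commutator kernel $\int \grad \psi_N(x-y)(u(y)-u(x))g(y)\,dy$ that uses $\partial_i u^i = 0$ to kill the leading-order contribution, or an additional averaging over scales exploiting the $B^0_{2,c}$ defining condition $\|\Pi_Lg\|_{L^2}\to 0$.
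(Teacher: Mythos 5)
Your overall architecture matches the paper's: contradict the constant-flux identity by showing the flux vanishes as $N\to\infty$ under the assumed moment bound, using H\"older with exponents $p/2$ and $p/(p-2)$ (which is indeed where $p>2$ enters). But the proof has a genuine gap at exactly the point you flag yourself: the commutator estimate with $\epsilon(N)\to 0$ is never established, and your own accounting shows that with the sharp projection $\Pi_{\leq N}$ the near-boundary pairs give only $\epsilon(N)\lesssim N\,M(N)^{-2}$, which does not vanish for slowly growing $M$ — precisely the regime the theorem must cover. Identifying an obstruction and listing plausible remedies is not the same as overcoming it, so as written the argument does not close.

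The missing idea, which the paper supplies, is to abandon the sharp cutoff entirely: one first re-derives the flux balance \eqref{eq:AD212} with a smooth Littlewood--Paley projection $\pi_{\leq N}=\zeta_N(\grad)$ in place of $\Pi_{\leq N}$ (the It\^o computation of Section \ref{subsec:Non-vanishing-flux} goes through verbatim, and $\tfrac12\norm{\pi_{\leq N}b}_{L^2}^2$ still converges to a nonzero constant). Then in the commutator $\brak{\pi_{\leq N}g,\pi_{\leq N}\grad\cdot(ug)-\grad\cdot(u\,\pi_{\leq N}g)}$ the symbol difference obeys $\abs{\zeta_N(k)-\zeta_N(\ell)}\lesssim \abs{k-\ell}/N$ by the mean value theorem; this factor $1/N$ cancels the $\abs{k}\approx N$ coming from the gradient, leaving only $\abs{k-\ell}$, which is absorbed into $\norm{u}_{\Hbf}$ since $\sigma>\tfrac{d}{2}+3$. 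Moreover the symbol difference is supported where $\abs{k}\approx\abs{\ell}\approx N$, so \emph{both} factors of $g$ sit at frequency $\approx N$ and each contributes $M(N)^{-1}$ via condition (ii) of Definition \ref{def:gen-mult}; this is why that Lipschitz-comparability condition is in the definition at all. The result is $\epsilon(N)=M(N)^{-2}$ with no loss of $N$ (Lemma \ref{lem:FluxFan}). Note that only this first-order, Constantin--E--Titi-type gain is needed — no second-order Taylor expansion or further use of $\Div u=0$ — because here $u$ is smooth; but the gain is unavailable for the sharp indicator cutoff, where the symbol difference is $O(1)$ across the boundary. Your proposal, by keeping $\Pi_{\leq N}$ from \eqref{eq:AD212}, is locked into the lossy estimate.
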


\begin{remark}
Note that we are not able to show that $\mu^0$ assigns zero mass to $B^{0}_{2,c}$ as we could for $L^1$, and instead can only show that moments with $p>2$ cannot be finite. This obstruction is related to the fact that moments in $\mu^0$ are related to time integrability of stationary solutions and the fact that the critical space is $L^2_tB^0_{2,\infty}$. It is unclear if it is possible for $\mu^0$ to assign positive measure to $\Hbf\times B^{0}_{2,c}$. 
\end{remark}

Using Theorems \ref{thm:OcritB2inf} and \ref{thm:Ocrit} we can deduce the following about the solutions 
studied in Theorem \ref{thm:Batch}. In particular, we show that moments of solutions diverge in certain norms, analogous to \eqref{eq:L2-diverge}.
\begin{corollary} \label{thm:div}
Let $\Phi$ on $[0,\infty)$ be a convex monotone function satisfying $\Phi(0) = 0$ and $\lim_{r\to \infty}\Phi(r)/r = \infty$ and let $M$ be a $B^0_{2,c}$-suitable multiplier. 
The unique stationary measure $\mu^\kappa$ on $\Hbf \times L^2$ satisfies the following for each $\delta > 0$
\begin{equation}\label{eq:blow-up-in-meas}
\lim_{\kappa \to 0} \E_{\mu_\kappa}\left(\int\Phi(|g|)\dx\right)^\delta  = \infty 
\end{equation}
and for each $p>2$
\begin{equation}
\lim_{\kappa \to 0} \EE_{\mu^\kappa} \norm{g}_{B_{2,\infty}^M}^{p} = \infty.
\end{equation}
\end{corollary}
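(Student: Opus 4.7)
The plan is to argue by contradiction and subsequential compactness, using Theorem~\ref{thm:kappato0} to pass a hypothetical bound on a $\mu^\kappa$-moment to a bound on the corresponding $\mu^0$-moment, then deriving a contradiction from one of the irregularity results. Concretely, if either claimed divergence fails, we can extract $\kappa_n \to 0$ with $\E_{\mu^{\kappa_n}}\Psi(g) \leq C$ for the relevant non-negative functional $\Psi$. By Theorem~\ref{thm:kappato0}, passing to a further subsequence we may assume $\mu^{\kappa_n} \to \mu^0$ weakly on $\Hbf \times H^{-s}$ for each $s > 0$, with $\mu^0$ a stationary measure for $(u_t, g_t^0)$. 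The key reduction is to check that $\Psi$ is lower semi-continuous on $H^{-s}$ for some admissible $s$: once this is established, the Portmanteau theorem (equivalently, Fatou for non-negative l.s.c.\ integrands under weak convergence of measures) yields
\[
\E_{\mu^0}\Psi(g) \;\leq\; \liminf_{n \to \infty} \E_{\mu^{\kappa_n}}\Psi(g) \;\leq\; C,
\]
contradicting either Theorem~\ref{thm:OcritB2inf} or Theorem~\ref{thm:Ocrit}. Since this works for every subsequence, the full limits in the statement are forced to be $+\infty$.

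For the Besov functional $\Psi(g) = \norm{g}_{B_{2,\infty}^M}^p = \sup_{N \in \{2^j\}} (M(N) \norm{\Pi_N g}_{L^2})^p$, lower semi-continuity on $H^{-s}$ is immediate: each projector $\Pi_N$ has finite-dimensional range, so $g \mapsto \norm{\Pi_N g}_{L^2}$ is continuous on $H^{-s}$, and a supremum of continuous functions is l.s.c. The resulting bound $\E_{\mu^0}\norm{g}_{B_{2,\infty}^M}^p \leq C$ then directly contradicts Theorem~\ref{thm:OcritB2inf}, completing the second claim.

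The first claim is harder because establishing lower semi-continuity of $\Psi(g) = (\int_{\T^d} \Phi(|g|)\dx)^\delta$ on $H^{-s}$ (extended to $+\infty$ off $L^1$) is the main obstacle. The plan is: suppose $g_n \to g$ in $H^{-s}$ with $\int \Phi(|g_n|)\dx \leq L$; then superlinearity of $\Phi$ together with de la Vall\'ee Poussin's criterion give uniform integrability of $(g_n)$, and Dunford-Pettis extracts a subsequence converging weakly in $L^1$ to some $\tilde g \in L^1$. Since $L^1 \hookrightarrow H^{-s}$ for $s > d/2$, uniqueness of distributional limits forces $\tilde g = g$ almost everywhere, and the standard weak-$L^1$ lower semi-continuity of the convex functional $\int \Phi(|\cdot|)\dx$ yields $\int \Phi(|g|)\dx \leq L$. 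Postcomposition with the continuous increasing map $t \mapsto t^\delta$ preserves l.s.c.\ Combined with Fatou, this gives $\int \Phi(|g|)\dx < \infty$ for $\mu^0$-almost every $g$; using $|r| \leq R + \Phi(|r|)$ for some $R > 0$ (a consequence of $\Phi(r)/r \to \infty$) yields $g \in L^1$ $\mu^0$-a.s., contradicting Theorem~\ref{thm:Ocrit}. The core difficulty is exactly this uniform integrability step: a convex functional tailored to $L^1$ regularity is not automatically l.s.c.\ under $H^{-s}$ convergence, and the de la Vall\'ee Poussin/Dunford-Pettis pair is what promotes $H^{-s}$ convergence of a superlinearly bounded sequence to weak $L^1$ convergence, which is the one ingredient not supplied purely by the weak-convergence framework.
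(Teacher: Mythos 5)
Your proposal is correct and follows exactly the route the paper intends: the authors state only that Corollary \ref{thm:div} is deduced from Theorems \ref{thm:Ocrit} and \ref{thm:OcritB2inf} and omit the details, and your argument (contradiction, tightness/Prokhorov along the bad sequence, stationarity of the limit, lower semicontinuity plus Portmanteau) is the natural filling-in of that deduction. The two delicate points — lower semicontinuity of the Besov sup via finite-dimensionality of each $\Pi_N$, and the de la Vall\'ee Poussin/Dunford--Pettis upgrade of $H^{-s}$ convergence to weak $L^1$ convergence for the superlinear functional — are both handled correctly.
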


\subsection{Open problems in Batchelor-regime passive scalar turbulence}\label{sec:Future}

Theorems \ref{thm:Batch}--\ref{thm:div} provide a starting point for a mathematical understanding of Batchelor-regime passive scalar turbulence at fixed Reynolds number. However, there are many remaining open questions, all of which are potentially accessible in the near future using a combination of stochastic PDEs, harmonic analysis, and random dynamical systems. Let us briefly outline these here. 
\begin{itemize}[leftmargin=*]
\item \textbf{Intermittency} Certainly the most important set of open questions regard intermittency.
  Following discussions in e.g. \cite{Frisch1995,CS14} we can begin to study intermittency by looking at the flatness parameters 
\begin{align}
F_p(N) = \frac{\EE \norm{ \Pi_{N} g}^{2p}_{L^{2p}}}{\left(\EE \norm{\Pi_N g}_{L^2}^2\right)^{p}}. 
\end{align}
A non-intermittent field would satisfy $F_p(N) \leq C(p)$ as $N \lesssim \kappa^{-1/2}$, $N \to \infty$, $\kappa \to 0$.
An example of such a field is white noise. At the opposite extreme is a maximally intermittent random field consisting of single Dirac delta function placed with uniform probability on $\T^d$, which satisfies $F_p(N) \approx_p N^{p-1}$.
Passive scalar turbulence is expected to be intermittent \cite{PS90, shraiman1998anomalous, HZ94, ott1988chaotic}.
A major step in our understanding would be to provide an analytic derivation of powers $\zeta(p)$ such that $F_p(N) \approx N^{\zeta(p)}$ for $N \to \infty$, $\kappa \to 0$, if such powers exist. 

A helpful intermediate step might be to consider the power spectrum of a discrete-time pulsed diffusion model 
\cite{IF19}, e.g., 
using a randomly-driven Chirikov standard map to model advection\footnote{Note that the presence of noise makes this tractable, unlike the deterministic case which is notoriously difficult \cite{gorodetski2012stochastic,duarte1994plenty}. } \cite{blumenthal2017lyapunov}. 

\item \textbf{How universal is ``universal''?} Another set of important problems is to study how widely applicable the Batchelor spectrum, and other tenets of the theory, such as uniqueness of stationary measures, Lagrangian chaos etc, to different and more realistic settings. 
\begin{itemize}[leftmargin=*]
\item Problems on $\T^d$ with body forcing are far removed from any real physical applications. Extending existing theories to include boundaries (exterior or interior domains) and replacing body forces with boundary driving are probably the most physically important extensions. Even relatively basic questions, such as uniqueness of stationary measures of the Navier-Stokes equations, are to our knowledge, quite challenging and still open for most questions of this type. See \cite{Shirikyan2018} for some progress in this direction. A related direction is to study spatially homogeneous solutions on $\mathbb R^d$.

\item Even on $\T^d$ with stochastic forcing, in the case of the Navier-Stokes equations, our results on Lagrangian chaos and scalar mixing \cite{BBPS18,BBPS19I,BBPS19II} cannot handle $C^\infty_x$ forcing yet. It is natural to seek to extend this to $C^\infty_x$ forcing and further to include non-white-in-time forcing such as OU tower forcing (see Section \ref{sec:CktCx} and \cite{BBPS19I})  and the class of bounded forcing studied in \cite{KNS18,KNS19,JNPS19}. Note that the hypoellipticity theory of Hairer and Mattingly \cite{HM11}  applies to the one-point Lagrangian flow ($\dot x_t = u_t(x_t)$) generated by 2D Navier-Stokes with OU tower forcing (and so the $(u_t,Z_t,x_t)$ Markov process has a unique stationary measure), however, our Lagrangian chaos results require strong Feller in order to use the version of the Furstenberg criterion in \cite{BBPS18}.
\end{itemize}
  
\item \textbf{Sharper regularity estimates and structure function renormalization.} Even just concerning basic questions related to Batchelor's law \eqref{eq:PSIR}, there are still remaining questions.
\begin{itemize}[leftmargin=*]
\item There are three basic levels of precision when discussing the power spectrum. After the cumulative spectrum, the next most difficult is a dyadic shell-by-shell estimate, which so far remains unaddressed by the results in this paper. The next most difficult after that is the pointwise estimate
\begin{align}
\EE_{\mu^\kappa} \abs{\hat{g}(k)}^2 \approx \abs{k}^{-d} \quad N_0 < \abs{k} < \kappa^{-1/2}. \label{ineq:ptwiseSpec}
\end{align}
In fact, one can even try to search for an estimate of the type (see discussion in the physics literature e.g. \cite{DSY10}), where of course we mean that the error is uniformly controlled in $\kappa$, 
\begin{align}
\EE_{\mu^\kappa} \abs{\hat{g}(k)}^2 = C_B\abs{k}^{-d} + o_{k \to \infty}(\abs{k}^{-d}) \quad N_0 < \abs{k} < \kappa^{-1/2}. 
\end{align}
These three basic levels are not equivalent. Pulsed-diffusion models based on discrete-time random dynamical systems might be able to shed some light on the subtle differences between these spectral characterizations. 


\item It is an interesting and subtle question to determine if the limiting solutions we obtain in Theorem \ref{thm:Batch} are \emph{exactly} in the Onsager critical space $L^2_t B^{0}_{2,\infty}$, or more to the point, whether the $\kappa > 0$ approximations are uniformly bounded in this space, that is,
\begin{align}
\EE_{\mu^\kappa} \sup_{N \in 2^{\mathbb N}} \norm{\Pi_N g}^2_{L^2} \lesssim 1.
\end{align}

\item Batchelor's law should concern the second order structure function $\EE_{\mu^\kappa}\norm{\delta_\ell g}_{L^2}^2$. However, as the scalar does not even remain a locally integrable function as $\kappa \to 0$, it is hard to make sense of exactly how the second order structure function will behave in and at the limit $\kappa \to 0$; instead, this may require a suitable renormalization. Similarly, Yaglom's law on the $L^2$ flux \eqref{ineq:Yag} is hard to make sense of rigorously at the $\kappa = 0$ limit. 
 \end{itemize} 
  
\end{itemize}

\section{Preliminaries} \label{sec:Prelim}

\subsection{Fluid models} \label{sec:FluidMod}

Let $d =2$ or $3$. We fix a real Fourier basis of 
of ${\bf L}^2 = \{ u \in L^2(\T^d, \R^d) : \int u \dee x = 0, \Div u = 0\}$ as follows:
for $m = (k, i) \in \K:= \Z_0^d \times \{ 1, \cdots, d-1\}$, we set
\[
e_m(x) = \begin{cases}
c_d \gamma_k^i \sin(k \cdot x) & k \in \Z_+^d \\
c_d \gamma_k^i \cos(k \cdot x) & k \in \Z_-^d \, .
\end{cases}
\]
Here, $\Z_0^d := \Z^d \setminus \{ 0 \}$, and $\Z_0^d = \Z_+^d \cup \Z_-^d$ is the partition
defined by $\Z_+^d = \{ k = (k^{(1)}, \cdots, k^{(d)}) \in \Z_0^d : k^{(d)} > 0 \} \cup \{ k \in \Z_0^d : k^{(1)} > 0, k^{(d)} = 0 \}$
and $\Z_-^d = - \Z_+^d$. For each $k \in \Z_0^d$, we have fixed a set $\{ \gamma_k^i\}_{i =1}^{d-1}$
of orthonormal vectors spanning the complement of the line spanned by $k \in \R^d$; these
are assumed to satisfy $\gamma_{-k}^i = - \gamma_k^i$. The coefficients $c_d > 0$ are normalization constants. Note that if $d = 2$, $\gamma_k = \gamma_k^1$ spans the perpendicular to $k$,
and may therefore be taken to be $\gamma_k = k^\perp / |k| , k^\perp := (k^{(2)}, - k^{(1)})$.

In terms of this Fourier basis, we consider the white-in-time, spatially Sobolev stochastic forcing
term
\begin{equation}\label{eq:noise-def}
Q W_t := \sum_{m \in \K} q_m e_m(x) W_t^{m} \, , 
\end{equation}
where $W_t^{m}, m \in \K$ are a family of independent standard one-dimensional Wiener processes with respect
to the cannonical stochastic basis $(\Omega_W, \mathscr F^W, (\mathscr F_t^W), \P_W)$.
The following decay and non-degeneracy assumption is made throughout this and our previous works \cite{BBPS18, BBPS19I, BBPS19II}.
\begin{assumption}\label{ass:strong}
	There exists $\alpha > \frac{5 d}{2}$ such that for all $m = (k, i) \in \K$, we have
	\[
	q_m \approx \frac{1}{|k|^\alpha} \, .
	\]  
\end{assumption}
The state space for our fluid velocity fields 
is 
\[
\Hbf := \left\{ u \in H^\sigma(\T^d, \R^d) : \int u \,\dee x = 0,\quad \Div u = 0 \right\} \, , 
\]
where $\sigma \in (\alpha - 2 (d - 1), \alpha - \frac{d}{2})$. Note that by our choice
of $\alpha$, we have $\sigma > \frac{d}{2} + 3$, so $\Hbf \hookrightarrow C^3$. 
We will write the Navier-Stokes system as an abstract evolution equation on $\Hbf$ by 
\begin{equation}\label{eq:NS-Abstract}
	\partial_t u + B(u,u) + Au = Q\dot{W} = \sum_{m\in\mathbb{K}} q_m e_m \dot{W}^m \, ,  
  \end{equation}
where 
\begin{align}
B(u,v) &= \left(\Id - \grad (-\Delta)^{-1} \grad \cdot \right)\grad \cdot (u \otimes v) \\
Au &= \begin{cases} -\nu \Delta u \quad&  \textup{ if } d=2 \\
-\nu' \Delta u + \nu\Delta^2 u \quad& \textup{ if } d=3. 
\end{cases}
\end{align}
The $(u_t)$ process with initial data $u$ is defined as the solution to \eqref{eq:NS-Abstract} in the mild sense \cite{KS,DPZ96}:
\begin{align}
u_t = e^{-tA}u - \int_0^t e^{-(t-s)A} B(u_s,u_s) ds + \int_0^t e^{-(t-s)A} Q \dee W(s) \, ,  \label{eq:Mild} 
\end{align}
where the above identity holds $\P_W$ almost surely for all $t>0$. We have the following well-posedness theorem:
\begin{proposition}[\cite{KS,DPZ96}] \label{prop:WPapp}
Let $d = 2$ or $3$. Under Assumption \ref{ass:strong}, for all initial $u \in \Hbf \cap \Hbf^{\sigma'}$ with $\sigma' < \alpha-\frac{d}{2}$ and all $T> 0, p \geq 1$, there exists a $\P_W$-a.s. unique solution $(u_t)$ to \eqref{eq:Mild} which is $\mathscr{F}_t^W$-adapted, and belongs to $L^p(\Omega_{W};C([0,T];\Hbf \cap \Hbf^{\sigma'})) \cap L^2(\Omega_W;L^2(0,T;\Hbf^{\sigma'+(d-1)}))$. Additionally, for all $p \geq 1$ and $0 \leq \sigma' < \sigma'' < \alpha - \frac{d}{2}$,  
\begin{align}
\EE_W \sup_{t \in [0,T]} \norm{u_t}_{\Hbf^{\sigma'}}^p & \lesssim_{T,p,\sigma'} 1 + \norm{u}_{\Hbf \cap \Hbf^{\sigma'}}^p \\
\EE_W \int_0^T \norm{u_s}_{\Hbf^{\sigma' + (d-1)}}^2 ds & \lesssim_{T,\delta} 1 + \norm{u}^2_{\Hbf^{\sigma'}} \\ 
\EE_W \sup_{t \in [0,T]} \left(t^{\frac{\sigma''-\sigma'}{2(d-1)}} \norm{u_t}_{\Hbf^{\sigma''}}\right)^p &\lesssim_{p,T,\sigma',\sigma''} 1 + \norm{u}^p_{\Hbf^{\sigma'}}. \label{ineq:locRegu}
\end{align}
\end{proposition}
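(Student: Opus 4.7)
The plan is to follow the classical splitting strategy of Da Prato and Zabczyk \cite{DPZ96}: separate the linear stochastic part from the deterministic (but path-dependent) nonlinear part, obtain sharp moment bounds for the stochastic convolution, and then close a deterministic energy argument in Sobolev spaces. First I would analyze the stochastic Stokes process
\[
Z_t := \int_0^t e^{-(t-s)A} Q \,\dee W_s \, .
\]
Using It\^{o}'s isometry, the decay in Assumption \ref{ass:strong}, and the smoothing of $e^{-tA}$ (which gains two derivatives per unit time in $d=2$ and four in $d=3$, i.e.\ a gain of $2(d-1)$ in our convention), a standard factorization/maximal inequality argument yields
\[
\EE_W \sup_{t \in [0,T]} \| Z_t \|_{\Hbf^{\sigma'}}^p \lesssim_{p,T,\sigma'} 1
\]
for every $p \geq 1$ and every $\sigma' < \alpha - \tfrac{d}{2}$, the critical threshold being precisely $\sum_m q_m^2 |k|^{2\sigma'} < \infty$. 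An identical estimate with the weight $t^{(\sigma''-\sigma')/(2(d-1))}$ gives the instantaneous-smoothing version of \eqref{ineq:locRegu} for $Z$ alone.

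Next I would let $v_t := u_t - Z_t$ and observe that pathwise (in $\omega$) $v$ solves the random PDE
\[
\partial_t v + Av + B(v+Z, v+Z) = 0 \, , \qquad v_0 = u \, ,
\]
which is deterministic once a realization of $Z$ is fixed. I would then carry out a Galerkin approximation $v^{(n)}$ in Fourier truncation and derive a priori estimates in $\Hbf^{\sigma'}$ by computing $\tfrac{1}{2}\tfrac{\dee}{\dt}\|v^{(n)}\|_{\Hbf^{\sigma'}}^2$. The dissipation gives the coercive term $\nu \|v^{(n)}\|_{\Hbf^{\sigma'+(d-1)}}^2$; the trilinear terms are controlled by product estimates and the embedding $\Hbf^{\sigma'} \hookrightarrow L^\infty$ (which holds since $\sigma' > d/2$ is the regime of interest, and for smaller $\sigma'$ one first closes in $L^2$ and bootstraps using the parabolic smoothing). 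A Gronwall argument combined with the moment bounds for $Z$ yields the claimed $L^p(\Omega; C_T \Hbf^{\sigma'}) \cap L^2(\Omega; L^2_T \Hbf^{\sigma'+(d-1)})$ bounds uniformly in $n$, and weak compactness plus pathwise uniqueness (which in 2D is standard and in 3D hyperviscous follows from the gain of four derivatives, making the nonlinearity subcritical) produces the unique mild solution.

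The instantaneous-smoothing bound \eqref{ineq:locRegu} for $u$ is then obtained by applying the operator norm estimate $\|e^{-tA}\|_{\Hbf^{\sigma'}\to\Hbf^{\sigma''}} \lesssim t^{-(\sigma''-\sigma')/(2(d-1))}$ termwise in the mild formulation \eqref{eq:Mild}: the linear piece $e^{-tA} u$ gains the full gap from the semigroup, the stochastic convolution is handled as above with the extra time weight, and the Duhamel integral of $B(u_s,u_s)$ is estimated using the already-established bounds on $\|u_s\|_{\Hbf^{\sigma'}}$ combined with the integrable singularity $(t-s)^{-(\sigma''-\sigma')/(2(d-1))}$, which is integrable because the gap $\sigma''-\sigma'$ is strictly less than $2(d-1)$ (and can be reduced to that range by iterating the bootstrap finitely many times).

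The only genuinely delicate point is pathwise uniqueness and the closing of the nonlinear energy estimate in the 2D case at the lowest allowed regularity $\sigma'$; there one uses the classical Ladyzhenskaya inequality to absorb the nonlinear term into $\nu\|\nabla v\|_{L^2}^2$. In the 3D hyperviscous case there is no such subtlety, since $A = -\nu'\Delta + \nu\Delta^2$ provides enough regularization to control $B(u,u)$ without any borderline estimate. I would expect this uniqueness/energy-closure step to be the main (but entirely standard) technical obstacle; everything else is a bookkeeping application of stochastic convolution bounds plus parabolic smoothing.
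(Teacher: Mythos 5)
Your proposal is correct and follows exactly the standard route (stochastic convolution bounds via factorization, pathwise energy/Galerkin estimates for $v = u - Z$, and semigroup smoothing for \eqref{ineq:locRegu}) that the paper itself does not reprove but simply cites from \cite{KS,DPZ96}. The only minor imprecision is calling $\sum_m q_m^2 |k|^{2\sigma'} < \infty$ the ``precise'' threshold for the convolution $Z$ (the convolution actually gains up to $d-1$ further derivatives over the noise), but this is a sufficient condition for the stated range $\sigma' < \alpha - \tfrac{d}{2}$ and does not affect the argument.
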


\medskip

\begin{proposition}\label{prop:uniqueStatMeasNSE}
Under Assumption \ref{ass:strong}, the process $(u_t)$ solving
\ref{eq:NS-Abstract} admits a unique stationary measure $\mu$. 
\end{proposition}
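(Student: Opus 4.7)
The plan is to prove Proposition \ref{prop:uniqueStatMeasNSE} by combining a Krylov–Bogoliubov-type argument for existence with a Doob–Khasminskii-type argument (strong Feller $+$ irreducibility) for uniqueness. Assumption \ref{ass:strong} is essentially completely non-degenerate, i.e., $q_m \neq 0$ for every $m \in \K$, which places us in the regime originally handled in \cite{FM95}; most of the proof amounts to verifying that the hypotheses of that framework apply to both the 2D Navier–Stokes and 3D hyperviscous Navier–Stokes settings considered here.

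First I would establish existence. The Itô formula applied to $\tfrac12 \|u_t\|_{L^2}^2$ together with the dissipation provided by $A$ yields, after standard manipulations, a moment bound of the form $\EE \|u_t\|_{\Hbf^{\sigma'}}^2 \lesssim 1$, uniformly in $t \geq 1$, for some $\sigma' > \sigma$ in the admissible range of Proposition \ref{prop:WPapp} (use the smoothing estimate \eqref{ineq:locRegu} and a time-shift/stationarity argument to absorb the initial condition). Since the embedding $\Hbf^{\sigma'} \hookrightarrow \Hbf$ is compact, Chebyshev gives tightness in $\Hbf$ of the time-averaged laws $\frac{1}{T}\int_0^T \mathrm{Law}(u_t)\,\dt$. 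Combined with the Feller property of $P_t$ on $\Hbf$ (which follows from continuous dependence on initial data in the mild formulation \eqref{eq:Mild}), the Krylov–Bogoliubov theorem produces a stationary measure $\mu$.

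Next I would prove uniqueness. The two ingredients are:
\begin{enumerate}
\item \textbf{Strong Feller.} Since $q_m \approx |k|^{-\alpha}$ for every $m \in \K$, the noise is elliptic on every finite-dimensional Galerkin projection, and one may establish a Bismut–Elworthy–Li-type identity $|\nabla P_t \varphi(u)| \lesssim C(t,u) \|\varphi\|_\infty$ for $t>0$ via the Malliavin calculus, exactly as in \cite{FM95}. In 2D this is the classical argument; in 3D the hyperviscosity $\Delta^2$ provides even stronger smoothing of the linearized equation, so the same argument goes through unchanged once one has the pathwise bounds of Proposition \ref{prop:WPapp}.
\item \textbf{Irreducibility.} For any $u_0 \in \Hbf$ and any nonempty open $U \subset \Hbf$, the control problem $\partial_t v + Av + B(v,v) = \sum q_m e_m f^m(t)$ is approximately controllable from $u_0$ into $U$ in some time $T$, because the controls span a dense subspace in $L^2$. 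Combined with a Cameron–Martin/Girsanov argument, this yields $P_T(u_0, U) > 0$.
\end{enumerate}
With strong Feller and irreducibility in hand, Doob's theorem (or equivalently Khasminskii's argument) implies that there is at most one stationary probability measure.

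The main obstacle in executing this plan is the verification of the Malliavin-based gradient estimate underlying the strong Feller property, since one must carefully manage the nonlinear term $B(u,u)$ when propagating the linearized/adjoint equations. However, under Assumption \ref{ass:strong} the forcing is infinite-dimensional and non-degenerate, so one does not need the delicate hypoelliptic machinery of \cite{HM06}; the estimates of Proposition \ref{prop:WPapp}, together with the spectral decay $q_m \approx |k|^{-\alpha}$ (which ensures the Malliavin covariance is invertible with integrable inverse), make the argument of \cite{FM95} applicable in both the 2D and 3D hyperviscous settings. The rest of the proof is essentially bookkeeping.
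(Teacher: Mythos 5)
Your proposal is correct and is essentially the paper's approach: the paper simply cites \cite{FM95} for $d=2$ under the completely non-degenerate Assumption \ref{ass:strong} and notes the $d=3$ hyperviscous case is a straightforward extension, and your sketch (Krylov--Bogoliubov for existence; strong Feller via non-degenerate noise plus irreducibility via controllability, then Doob--Khasminskii for uniqueness) is precisely the Flandoli--Maslowski argument being invoked. You also correctly identify that the hypoelliptic machinery of \cite{HM06} is not needed here.
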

Proposition \ref{prop:uniqueStatMeasNSE} was first proved for $d = 2$ in 
\cite{FM95} under Assumption \ref{ass:strong}; 
the generalization to $d = 3$ is a straightforward extension. 

\begin{remark}
Uniqueness of stationary measures is known for Navier-Stokes under much weaker 
nondegeneracy conditions than Assumption \ref{ass:strong}, e.g., the \emph{truly hypoelliptic} setting of \cite{HM06}
with $d = 2$ which only requires to force modes $|m|_\infty \leq 1$. However, Assumption
\ref{ass:strong} is necessary for the Lagrangian chaos and scalar mixing results
in \cite{BBPS18, BBPS19I, BBPS19II} because our methods require, for now,
strong Feller regularity of the semigroup $t \mapsto u_t$. See Remark 2.6 in 
\cite{BBPS18} and Remark 2.19 in \cite{BBPS19I} for more discussion. 
\end{remark}

Because we require Assumption \ref{ass:strong}, 
 we are not aware of how to extend our results to fluid models solving 
the Navier-Stokes equations which are spatially $C^\infty$ or differentiable in time. 
However, we are able to apply our results to a class of finite-dimensional
fluid models for which solutions are $C^k_t C_x^\infty$. This is the subject of the following 
short section.

\subsection{$C^k_t C^\infty_x$ fluid models governed by finite dimensional SDE} \label{sec:CktCx}

If the fluid evolves according to a finite-dimensional SDE, then the methods involved are significantly simpler at a technical level and the strength of H\"ormander's theorem \cite{Hormander85} allows us to impose much weaker conditions on the noise models we consider. Consequently, we can produce fluid models which have better spatial and time regularity. 

Below, for $Q W_t$ given by \eqref{eq:noise-def}, we define $\mathcal K_0 = \{ m \in \K : q_m \neq 0\}$. 
For $m = (k, i) \in \K = \Z_0^d \times \{ 1,\cdots, d-1\}$, we define $|m|_\infty = \max_j |k^{(j)}|$. 

For the following finite dimensional stochastic fluids models we will make the the following less restrictive assumption to Assumption \ref{ass:strong}.
\begin{assumption}\label{ass:weak}
Assume $m \in \mathcal K_0$ if $|m|_\infty \leq 2$. 
\end{assumption}

For $\mathcal K \subset \K$ we define $\Hbf_{\mathcal K}$ to be the linear span of the 
Fourier modes $\{ e_m \}_{m \in \mathcal K}$. One model we consider is the Stokes system
on $\T^d, d =2,3$, prescribing the time evolution $(u_t)$ on the state space 
$\Hbf_{\mathcal K_0}$ for fixed initial $u_0 \in \Hbf_{\mathcal K_0}$ by 
\begin{equation}\label{eq:stokes}
\begin{aligned}
&\partial_t u_t = - \nabla p_t + \Delta u_t + Q \dot W_t \\
&\Div u_t = 0.
\end{aligned}
\end{equation}
When $\mathcal K_0$ is finite, \eqref{eq:stokes} is a finite-dimensional SDE on $\Hbf_{\mathcal K_0} \cong \R^{| \mathcal K_0|}$; indeed, it is essentially a product of
independent Ornstein-Uhlenbeck processes on $\R^{| \mathcal K_0|}$, and in particular
an elliptic diffusion on $\R^{| \mathcal K_0|}$. 

For $N \geq 1$ we define
$\Hbf_N \subset \Hbf$ to be the linear span of the $e_m$ with $|m|_\infty \leq N$. 
Define $\Pi_{\leq N} : \Hbf \to \Hbf_N$ to be the orthogonal projection.
Another model we consider for $(u_t)$ is the Galerkin-Navier-Stokes system, defined for fixed
$u_0 \in \Hbf_N$ by 
\begin{equation}\label{eq:galerkin}
\begin{aligned}
&\partial_t u_t = - \Pi_{\leq N} ( \grad p_t + u_t \cdot \grad u_t) + \nu \Delta u_t + Q \dot W_t \\
&\Div u_t = 0 \, ,
\end{aligned}
\end{equation}
where implicitly we assume that $q_m \neq 0$ only if $|m|_\infty \leq N$. 
As with the Stokes system, Galerkin-Navier-Stokes is an SDE on the finite-dimensional space
$\Hbf_N$; under Assumption \ref{ass:weak}, equation \eqref{eq:galerkin} is known to 
satisfy the parabolic H\"ormander condition \cite{E2001-lg, Romito2004-rc}, 
and so results in a hypoelliptic diffusion. 

Standard finite-dimensional stochastic analysis \cite{kunita1996stochastic} applies to each of \eqref{eq:stokes}, \eqref{eq:galerkin}, yielding velocity field processes $(u_t)$ which are $C^{\frac12 -}$ in time and spatially $C^\infty$. In addition to these, our methods also apply to a class of models including those which vary $C^k$ in time for any fixed $k \geq 1$. These models are effectively driven by the projection of a coupled system of Ornstein-Uhlenbeck processes. More precisely: fix $2 \leq N \leq M$ and let 
$\mathcal A : \Hbf_M \times \Hbf_M$ be diagonalizable with strictly positive spectrum. 
Let 
\[
\Gamma W_t = \sum_{|m|_\infty \leq M} \Gamma_m e_m W_t^m
\] 
and let $X : \Hbf_N \times \Hbf_N \to \Hbf_N$ be a bilinear mapping with 
$u \cdot X(u,u) = 0$ and $X(e_m, e_m) = 0$ for all $|m|_\infty \leq N$. 
We consider the following \emph{generalized Galerkin-Navier-Stokes system with OU tower noise}, defined by
\begin{equation}\label{eq:generalizedGNSE}
\begin{aligned}
&\partial_t u_t = - X(u_t, u_t) + \nu \Delta u_t + Q Z_t \\
&\partial_t Z_t = - \mathcal A Z_t + \Gamma W_t.
\end{aligned}
\end{equation}
Here, the noise term applied to the $(u_t)$ process is 
$Q Z_t := \sum_{|m|_\infty \leq N} q_m Z_t^m e_m(x)$, where $t \mapsto Z_t^m\in \R$ are the Fourier coefficients of $Z_t$, i.e.,  $Z_t(x)= \sum_m Z_t^m e_m(x)$. 
Note that $(u_t)$ is not a Markov process, but $(Z_t)$ alone and $(u_t, Z_t)$ are. 
The appropriate nondegeneracy assumption in this setting is as follows:
\begin{assumption}\label{ass:genOU}
Assume that the coefficients $\{ q_m\}$ satisfy Assumption \ref{ass:weak}, and additionally 
that the parabolic H\"{o}rmander condition holds for the $(u_t, Z_t)$ process on $\Hbf_N \times \Hbf_M$.
\end{assumption}

\begin{theorem}\label{thm:finDim}
All of the main results in Section \ref{subsec:main-results} hold whenever $(u_t)$ evolves according to 
\eqref{eq:stokes} or \eqref{eq:galerkin} under Assumption \ref{ass:weak}, 
or \eqref{eq:generalizedGNSE} under Assumption \ref{ass:genOU}.
\end{theorem}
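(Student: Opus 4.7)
The plan is to recognize that Theorem \ref{thm:finDim} is essentially a ``bookkeeping'' result: I would extract from the proofs of Theorems \ref{thm:Batch}--\ref{thm:div} the abstract properties of the velocity process $(u_t)$ that they actually use, then verify each of these properties for the three finite-dimensional models under Assumption \ref{ass:weak} or Assumption \ref{ass:genOU}. Concretely, the inputs needed are: (a) existence, uniqueness and moment bounds for a stationary measure $\mu$ for $(u_t)$ (and consequently for $(u_t,g_t^\kappa)$ via Proposition \ref{prop:StatMes}); (b) moment bounds on $\|u_t\|_{C^k_x}$ of the sort in Proposition \ref{prop:WPapp} needed for the lower bound in \eqref{eq:PSIR}; (c) almost-sure positivity of the top Lyapunov exponent \eqref{defn:lyap} and positivity of the moment Lyapunov exponents $\Lambda(p)$ for small $p>0$; and (d) the uniform-in-$\kappa$, almost-sure exponential mixing estimate \eqref{eq:k=0-mixing} with the $\kappa$-independent moment bounds on $D_\kappa$.

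First I would verify (a) and (b). For \eqref{eq:stokes} the process is a finite-dimensional Ornstein--Uhlenbeck diffusion, so (a) is immediate (the stationary measure is Gaussian) and (b) holds trivially since $\Hbf_{\mathcal K_0}$ is finite dimensional and $u_t$ is $C^\infty$ in $x$. For the Galerkin system \eqref{eq:galerkin} and the generalized OU-tower system \eqref{eq:generalizedGNSE}, Assumptions \ref{ass:weak} and \ref{ass:genOU} respectively imply the parabolic H\"ormander condition on $\Hbf_N$ and on $\Hbf_N\times\Hbf_M$ (as established in \cite{E2001-lg, Romito2004-rc} for Galerkin NSE and a direct verification for \eqref{eq:generalizedGNSE}), giving the strong Feller property, and combined with the standard energy-dissipation inequality a classical Doob--Khasminskii argument yields unique stationarity with exponential moment bounds. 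Spatial smoothness in (b) is free because $(u_t)$ takes values in the finite-dimensional subspace $\Hbf_N\subset C_x^\infty$.

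The substance of the argument is (c) and (d). Fortunately, these are precisely the content of our earlier works \cite{BBPS18, BBPS19I, BBPS19II}, which were written to treat in parallel the 2D NSE and 3D hyperviscous NSE of Systems \ref{sys:NSE}--\ref{sys:3DNSE} and the finite-dimensional models \eqref{eq:stokes}, \eqref{eq:galerkin}, \eqref{eq:generalizedGNSE}. I would quote directly from those papers the resulting statements of Lagrangian chaos and uniform-in-$\kappa$ mixing for each model. The one point requiring care is that the Furstenberg-type criterion used in \cite{BBPS18} to establish positivity of $\lambda_1$ relies on strong Feller regularity of the Markov semigroup on the projective/two-point extension; for \eqref{eq:stokes} and \eqref{eq:galerkin} this is elliptic respectively hypoelliptic and follows from H\"ormander's theorem, while in the OU-tower setting \eqref{eq:generalizedGNSE} Assumption \ref{ass:genOU} was formulated precisely to guarantee this.

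With (a)--(d) in hand, I would inspect the proofs of the main theorems and verify that no additional property of Navier--Stokes is invoked. The lower bound in Theorem \ref{thm:Batch} uses only (b) (to control the Lipschitz norm of $u_t$ and hence the instantaneous rate of frequency growth); the upper bound uses only (a), (b) and (d) via \eqref{eq:k=0-mixing}. Theorem \ref{thm:kappato0} uses only the $\kappa$-uniform moment bound from Corollary \ref{cor:unifHdeltaEstBatch} plus a Krylov--Bogoliubov-type extraction along with the weak continuity of $(u_t,g_t^0)$ on $\Hbf$-valued test functions. Theorems \ref{thm:Ocrit}, \ref{thm:OcritB2inf} and Corollary \ref{thm:div} use only the $L^1$ conservation law for transport by a $C^1_x$ velocity field and the flux identity \eqref{eq:AD212}, both of which depend only on spatial smoothness of $(u_t)$, which is actually \emph{better} in the finite-dimensional setting. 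The main obstacle, and the only place a genuinely new argument might be needed, is isolating these abstract hypotheses cleanly in the proofs of Theorems \ref{thm:Batch}--\ref{thm:OcritB2inf}; once this is done, Theorem \ref{thm:finDim} follows by inserting the verifications of (a)--(d) model-by-model.
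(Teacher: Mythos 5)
Your proposal is correct and matches the paper's approach: the paper itself states only that ``the application of these arguments to the finite dimensional models \eqref{eq:stokes}, \eqref{eq:galerkin} and \eqref{eq:generalizedGNSE} is straightforward and omitted for brevity,'' relying on the fact that the key dynamical inputs (Lagrangian chaos, uniform-in-$\kappa$ mixing, and the Lyapunov/drift estimates) were already established for these models in \cite{BBPS18,BBPS19I,BBPS19II}. Your itemization of the abstract hypotheses (a)--(d) and the model-by-model verification is exactly the intended (but unwritten) argument, with the correct identification of the H\"ormander/strong Feller point as the only delicate step.
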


\begin{remark}[see Remark 1.10 in \cite{BBPS19I}]\label{rmk:CkTCinfX}
Consider the following example of a system in the setting of \eqref{eq:generalizedGNSE}. 
Fix $n \geq 1$ and consider the model
\begin{align} \label{eq:smoothInTime}
u_t(x) = \sum_{|m|_\infty \leq 2} u_t^m e_m(x) \, , 
\end{align}
where the coefficients $u_t^m$ evolve according to
\begin{align*}
\partial_t {u}^m_t & = - {u}^m_t + Z^{m,0}_t \\
\partial_t Z^{m,\ell} & = - Z_t^{m,\ell} + Z_t^{m,\ell+1} \quad 1 \leq \ell \leq n -1 \,\\
\partial_t Z^{m,n} & = - Z_t^{m,n} +\dot{W}^m_t.
\end{align*}
Up to re-indexing, this fits into the framework of \eqref{eq:generalizedGNSE} with $X \equiv 0$.
The parabolic H\"{o}rmander condition for $(u_t, Z_t)$ is satisfied, and so 
Theorem \ref{thm:finDim} holds for $(u_t)$ as above. Notably, solutions $(u_t)$ to \eqref{eq:smoothInTime} are $C^{n+1}$-differentiable in time and smooth (indeed, analytic) in 
space. The authors hope this serves as an indication that although the
 methods in this paper rely strongly on the stochastic framework, they are not inherently restricted 
 to the rough time regularity of white-in-time noise. 
\end{remark}

Going forward, we will assume $(u_t)$ solves \eqref{eq:NS-Abstract} with Assumption \ref{ass:strong} for remainder of the paper. The application of these arguments to the finite dimensional models \eqref{eq:stokes}, \eqref{eq:galerkin} and \eqref{eq:generalizedGNSE} is straightforward and omitted for brevity.

\subsection{Lyapunov functions and exponential estimates}

The results in our previous series of papers
\cite{BBPS19I, BBPS19II} require the use of the family of Lyapunov functions
\begin{equation}\label{eq:V-def}
V(u) = V_{\beta, \eta}(u) := (1 + \| u \|_{\Hbf}^2)^\beta e^{\eta \| u \|_{\Wbf}^2}
\end{equation}
where $\beta \geq 0, \eta > 0$. Here, for velocity fields $u$ we define
\[
\| u \|_\Wbf :=
\begin{cases}
\| \mathrm{curl}\, u \|_{L^2} & d = 2 \\
\| u \|_{L^2} & d = 3 \, .
\end{cases}
\]

Below, we formulate a drift condition for the family $V = V_{\beta, \eta}$, ensuring that trajectories
of the $(u_t)$ process frequently visit the sublevel sets of $V$. 
Define $\eta_* = \nu / \mathcal Q$, where 
\[
\mathcal Q = 64
\begin{cases}
 \sup_{m \in \mathbb K} \abs{k}\abs{q_m} & d = 2 \\
 \sup_{m \in \mathbb K} \abs{q_m} & d = 3\, .
\end{cases}
\]

\begin{lemma}[Lemma 3.7 in \cite{BBPS19I}] \label{lem:TwistBd} Let $(u_t)$ be a solution to the stochastic Navier Stokes equations \eqref{eq:NS-Abstract} with initial data $u\in\Hbf$. 
For all $0\leq \gamma < \nu / 8$, $r\in (0,3)$, $C_0 \geq 0$, and $V(u) = V_{\beta,\eta}$ where $\beta \geq 0$ and $0 < e^{\gamma T}\eta < \eta_*$, there exist constants $c = 
c(\gamma,r,C_0,\beta,\eta) > 0, C = C(\gamma,r,C_0,\beta,\eta) \geq 1$ such that the following estimate holds for any $T > 0$: 
\begin{equation}\label{eq:Twistbd}
\EE_W\exp\left(C_0 \int_0^T\norm{u_s}_{\Hbf^r}\ds\right)\sup_{0\leq t\leq T}V^{e^{\gamma t}}(u_t) \leq C e^{c T} V(u).
\end{equation}

\end{lemma}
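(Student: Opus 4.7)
The plan is to apply Itô's formula to the time-dependent Lyapunov functional $V^{e^{\gamma t}}(u_t) = (1+\|u_t\|_\Hbf^2)^{\beta e^{\gamma t}}\exp(\eta e^{\gamma t}\|u_t\|_\Wbf^2)$, exploit the enstrophy/energy dissipation of the Navier--Stokes equations to absorb both the Itô correction coming from the noise and the extra drift produced by the factor $e^{\gamma t}$, and then control the supremum in $t$ by an exponential (super)martingale argument.

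For the main exponential factor $\exp(\eta e^{\gamma t}\|u_t\|_\Wbf^2)$, the 2D vorticity equation $d\omega_t = (-u_t\cdot\nabla\omega_t + \nu\Delta\omega_t)dt + \curl(Q\,dW_t)$ (with the transport term $L^2$-orthogonal to $\omega_t$) yields
\[
d\|u_t\|_\Wbf^2 = \bigl(-2\nu\|\nabla\omega_t\|_{L^2}^2 + C_{\mathrm{noise}}\bigr)dt + dN_t, \qquad d\langle N\rangle_t \lesssim \mathcal Q\|u_t\|_\Wbf^2\,dt,
\]
and analogously in 3D with $\|u\|_\Wbf = \|u\|_{L^2}$ and dissipation $\nu\|\Delta u\|_{L^2}^2$. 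A second application of Itô to $\exp(\eta e^{\gamma t}\|u_t\|_\Wbf^2)$ produces a drift bracket of the form
\[
\eta e^{\gamma t}\bigl(\gamma\|u_t\|_\Wbf^2 - 2\nu\|\nabla\omega_t\|^2 + C\bigr) + c\,\eta^2 e^{2\gamma t}\mathcal Q\|u_t\|_\Wbf^2,
\]
which, via Poincaré $\|\nabla\omega\|^2\geq \|u\|_\Wbf^2$ together with the hypotheses $\gamma < \nu/8$ and $e^{\gamma T}\eta < \eta_* = \nu/\mathcal Q$, is bounded above by $-c'\eta e^{\gamma t}\|u_t\|_\Wbf^2 + C$ for some $c' > 0$.

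The prefactor $(1+\|u\|_\Hbf^2)^{\beta e^{\gamma t}}$ is handled by the same scheme applied to $\beta e^{\gamma t}\log(1+\|u\|_\Hbf^2)$: the $L^2$ energy balance, together with Poincaré, gives a negative drift once $\gamma$ is small, and the coupling with the $\exp$ factor contributes only controllable lower-order terms. Combining everything yields the Itô inequality
\[
dV^{e^{\gamma t}}(u_t) \leq \bigl(Ce^{\gamma t} - c\eta e^{\gamma t}\|u_t\|_\Wbf^2\bigr)V^{e^{\gamma t}}(u_t)\,dt + V^{e^{\gamma t}}(u_t)\,dM_t,
\]
with $d\langle M\rangle_t \lesssim \|u_t\|_\Wbf^2\,dt$. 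To absorb the factor $\exp(C_0\int_0^T\|u_s\|_{\Hbf^r}\,ds)$ for $r < 3$, I would use the parabolic gain from the dissipation: in both models the dissipation controls $\|u\|_{\Hbf^{r+\theta}}^2$ for some $\theta > 0$, so by Sobolev interpolation and Young's inequality, $C_0\|u_s\|_{\Hbf^r}$ is dominated by a small fraction of the negative drift $c\eta e^{\gamma s}\|u_s\|_\Wbf^2$ plus a deterministic constant. An exponential supermartingale estimate (Novikov's condition being verified by the quadratic variation bound above) combined with Doob's maximal inequality then delivers the desired bound $\EE_W\exp(C_0\int_0^T\|u_s\|_{\Hbf^r}\,ds)\sup_{t\leq T}V^{e^{\gamma t}}(u_t) \leq Ce^{cT}V(u)$.

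The main obstacle is the tight arithmetic in the absorption step: the destabilizing contributions --- the time-growth $\gamma$-term, the Itô correction of size $\eta^2 e^{2\gamma t}\mathcal Q\|u\|_\Wbf^2$, and the penalty $C_0\|u\|_{\Hbf^r}$ --- must all be dominated by the single dissipation budget $2\nu\eta e^{\gamma t}\|\nabla\omega\|^2$ (resp.\ hyperviscous analog in 3D), which is exactly what forces the sharp constraints $e^{\gamma T}\eta < \eta_*$, $\gamma < \nu/8$, and $r < 3$. Extending beyond any of these thresholds would require either a genuinely larger dissipation budget or a different Lyapunov scheme.
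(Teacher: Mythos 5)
This lemma is only quoted in the present paper (it is Lemma 3.7 of \cite{BBPS19I}), so there is no in-paper proof to compare against; judged on its own merits, your architecture (It\^o on the time-weighted Lyapunov functional, exponential supermartingale plus Doob, absorption of the destabilizing terms into the dissipation) is the standard and essentially correct skeleton, but two of your absorption steps rest on claims that are false as stated. First, the dissipation budget coming from the $\Wbf$-balance is exactly $H^2$: in 2D the enstrophy balance yields $-2\nu\|\nabla\omega_t\|_{L^2}^2\approx -2\nu\|u_t\|_{\dot H^2}^2$, and in 3D the hyperviscous $L^2$ balance yields $-2\nu\|\Delta u_t\|_{L^2}^2$, again only $\dot H^2$. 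Your assertion that ``the dissipation controls $\|u\|_{\Hbf^{r+\theta}}^2$ for some $\theta>0$'' therefore fails for every $r\geq 2$, and with it the Young/interpolation absorption of $C_0\|u_s\|_{\Hbf^r}$ for $r\in[2,3)$. Reaching $r<3$ is precisely the technical heart of the lemma and requires an extra ingredient beyond the single energy balance --- a higher-order energy estimate or a parabolic-smoothing bootstrap through the mild formulation --- whose nonlinear terms must themselves be controlled by the already-established lower-order exponential bounds; none of this is supplied.

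Second, your treatment of the prefactor $(1+\|u\|_{\Hbf}^2)^{\beta e^{\gamma t}}$ invokes ``the $L^2$ energy balance,'' but $\Hbf$ is $H^\sigma$ with $\sigma>\tfrac d2+3$, not $L^2$. The $H^\sigma$ balance does not enjoy the cancellation $\langle u,B(u,u)\rangle_{H^\sigma}=0$; the nonlinear term there is of size roughly $\|u\|_{H^\sigma}^2\|u\|_{H^r}$ for some $r\in(\tfrac d2+1,3)$, so the drift of $\beta\log(1+\|u\|_{\Hbf}^2)$ contains exactly the same problematic $\|u\|_{H^r}$ contribution as the exponential factor you are trying to absorb --- it is not a ``controllable lower-order term.'' Both gaps are of the same nature: the proof must first close an exponential estimate at the $H^2$ dissipation level and then propagate it upward to $H^r$, $r<3$, and to $H^\sigma$, and that upward propagation is missing from the proposal.
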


\begin{remark} \label{rmk:SuperL}
To connect \eqref{eq:Twistbd} with more standard drift conditions given in, e.g., \cite{meyn2012markov}: write $P_1$ for the Markov semi-group for the Navier-Stokes process $(u_t)$ and apply Jensen's inequality to \eqref{eq:Twistbd} to deduce that $\exists C_L > 0$ such that $P_1 V \leq  (e^{C_L}V)^{e^{-\gamma}}$.
In particular, we have the following drift condition: 
\begin{align}\label{eq:superLyapDriftCond}
\forall \delta > 0 \, , \exists C_\delta > 0 \quad \text{ such that } \quad P_1V \leq \delta V + C_{\delta} \, .
\end{align}
Note that the contraction constant $\delta > 0$ above can be made \emph{arbitrarily small}.
Iterating this bound, it is straightforward to show that $\forall \lambda > 0$, $\exists K_\lambda$ such that for all $t  > 0$ there holds
\begin{align}
P_t V \leq e^{-\lambda t}V + K_\lambda. \label{ineq:supLDct}
\end{align}
\end{remark}

We will also need the following basic stability estimate for solutions to the Navier-Stokes equations.
\begin{lemma}\label{lem:uStablEst}
For all $u,u' \in \Hbf$ and let $(u_t)$ and $(u'_t)$ be the corresponding solutions with the same noise-path $\omega$.
Then $\forall p \in [1,\infty)$, there exists a deterministic $K > 0$ such that following stability estimate holds for $V(u)$ as in \eqref{eq:V-def}, 
\begin{align}
\EE \norm{u_t-u'_t}_{\Hbf}^p \lesssim_p  e^{pKt}\left(V(u)^p + V(u')^p\right)\norm{u-u'}_{\Hbf}^p.
\end{align}
\end{lemma}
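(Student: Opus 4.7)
The central observation is that, since $(u_t)$ and $(u_t')$ are driven by the same cylindrical Wiener process $W_t$, the stochastic forcing $Q \dot W_t$ cancels in the difference $w_t := u_t - u_t'$, which therefore satisfies the \emph{random but pathwise deterministic} linear PDE
\[
\partial_t w_t + A w_t + B(w_t, u_t) + B(u_t', w_t) = 0, \qquad w_0 = u - u'.
\]
This reduces the problem to a pathwise stability analysis of a linear equation whose coefficients involve $(u_t)$ and $(u_t')$.

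The first step is a pathwise energy estimate in $\Hbf = H^\sigma$. The term $\langle B(u_t', w_t), w_t\rangle_{\Hbf}$ is handled by a standard Kato--Ponce commutator estimate, using that $w_t = u_t - u_t'$ is divergence-free, giving a bound $\lesssim \|u_t'\|_{\Hbf} \|w_t\|_{\Hbf}^2$. For $\langle B(w_t, u_t), w_t\rangle_{\Hbf}$, one rewrites $(w_t \cdot \nabla) u_t = \nabla \cdot (w_t \otimes u_t)$ via $\nabla \cdot w_t = 0$, integrates by parts, and bounds $w_t \otimes u_t$ in $\Hbf$ by a Moser tame estimate. The resulting top-order contribution involving $\|w_t\|_{\Hbf^{\sigma+(d-1)}}$ is absorbed into the parabolic dissipation provided by $A$ via Young's inequality. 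This yields a pathwise differential inequality
\[
\tfrac{d}{dt}\|w_t\|_{\Hbf}^2 \leq C_\sigma\, \Phi(u_t, u_t')\, \|w_t\|_{\Hbf}^2,
\]
where $\Phi$ is polynomial in appropriate Sobolev norms of $u_t, u_t'$. Gronwall then gives the pathwise bound
\[
\|w_t\|_{\Hbf}^p \leq \|u - u'\|_{\Hbf}^p \exp\!\left( p C_\sigma \int_0^t \Phi(u_s, u_s')\,ds \right).
\]

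The second step is taking expectations. By Cauchy--Schwarz on $\Omega_W$, split the expectation of the exponential factor into contributions involving $u$ and $u'$ separately. Each factor is bounded using Lemma \ref{lem:TwistBd}, which controls exponential moments of $\int_0^T \|u_s\|_{\Hbf^r}\,ds$ weighted by $V(u)$ for any $r < 3$ and any $C_0$. Where $\Phi$ formally involves higher-order Sobolev norms than allowed by Lemma \ref{lem:TwistBd}, use Sobolev interpolation between a subcritical $\Hbf^r$ (to which Lemma \ref{lem:TwistBd} applies) and the smoothing $\Hbf^{\sigma + (d-1)}$-norm (for which polynomial moments are provided by \eqref{ineq:locRegu}), together with an extra H\"older split in $\Omega_W$. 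Combining the two factors via AM--GM, $V(u)^{1/2} V(u')^{1/2} \leq \tfrac{1}{2}(V(u) + V(u'))$, and absorbing the power $p$ by replacing $(\beta, \eta) \to (p\beta, p\eta)$ in $V_{\beta, \eta}$ yields the claimed bound $\EE \|w_t\|_{\Hbf}^p \lesssim_p e^{pKt}(V(u)^p + V(u')^p) \|u - u'\|_{\Hbf}^p$.

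The main technical obstacle is the high-regularity energy estimate: because $B$ is formally a first-order differential operator and $\Hbf = H^\sigma$ sits well above $H^3$, a naive test would lose a derivative on $u_t$. This is overcome by exploiting the divergence-free structure of $w_t$, careful integration by parts, and absorption of top-order terms into the parabolic dissipation from $A$. Once the pathwise bound is established, the rest of the argument is a routine application of Cauchy--Schwarz and Lemma \ref{lem:TwistBd}.
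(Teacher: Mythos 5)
Your first step (forming the pathwise linear equation for $w_t = u_t - u_t'$ and running a Gr\"onwall argument) is the same starting point as the paper. The gap is in how you propose to take expectations of the Gr\"onwall factor. A direct $H^\sigma$ energy estimate of the kind you describe inevitably places the \emph{full} $\Hbf = H^\sigma$ norm of $u_t$ or $u_t'$ (via the Kato--Ponce commutator term $\|u'\|_{H^\sigma}\|\nabla w\|_{L^\infty}\|w\|_{H^\sigma}$, or the analogous term from $B(w,u)$) inside the exponential, i.e.\ you end up needing $\EE\exp\bigl(C\int_0^t \|u_s\|_{H^\sigma}\,\ds\bigr)$. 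Lemma \ref{lem:TwistBd} only furnishes exponential moments of $\int_0^t\|u_s\|_{H^r}\,\ds$ for $r<3$, and your proposed repair --- interpolating $\|u\|_{H^\sigma}\lesssim \|u\|_{H^r}^{\theta}\|u\|_{H^{\sigma+(d-1)}}^{1-\theta}$ and splitting by H\"older --- does not close: after Young's inequality the high norm survives inside the exponential with a small but nonzero coefficient, and \eqref{ineq:locRegu} only gives \emph{polynomial} (second) moments of $\int_0^t\|u_s\|^2_{H^{\sigma+(d-1)}}\,\ds$, which cannot control an exponential moment. No estimate in the paper bounds $\EE\exp\bigl(\delta\int_0^t\|u_s\|_{H^{\sigma+(d-1)}}\,\ds\bigr)$ for any $\delta>0$.

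The paper's proof (a variation of Lemma 3.10 of \cite{BBPS19I}) avoids this by a two-tier structure: first a pathwise bound on the \emph{low} norm $\|w_t\|_{\Wbf}$ whose Gr\"onwall exponent involves only $\int_0^t(\|u_\tau\|_{H^r}+\|u_\tau'\|_{H^r})\,\dee\tau$ for some $r\in(\tfrac d2+1,3)$, and then a bound on $\|w_t\|_{\Hbf}^2$ in which the top norms $\|u_\tau\|_{\Hbf},\|u_\tau'\|_{\Hbf}$ enter only \emph{polynomially} through $\sup_{0<\tau<t}(\|u_\tau\|_{\Hbf}^q+\|u_\tau'\|_{\Hbf}^q)$. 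This is exactly the product that Lemma \ref{lem:TwistBd} is built to handle, since the factor $\sup_{0\le t\le T}V^{e^{\gamma t}}(u_t)$ with $\beta$ large absorbs polynomial moments of the high norm jointly with the exponential of the subcritical time integral. To repair your argument you would need to restructure the energy estimate so that the $H^\sigma$ norms are extracted from the exponential --- e.g.\ by first proving the $\Wbf$ bound and interpolating $w$ (not $u$) between $\Wbf$ and $H^{\sigma+1}$, absorbing the top piece into the dissipation from $A$ --- rather than attempting to tame $\exp\bigl(\int\|u\|_{H^\sigma}\bigr)$ after the fact.
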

\begin{proof}
Define $w_t = u_t - u'_t$, 
\begin{align}
\partial_t w_t + B(u_t,w_t) + B(w_t,u_t')  + Aw_t =0. 
\end{align}
The proof proceeds as a variation of e.g. [Lemma 3.10; \cite{BBPS19I}].
Analogous to the calculations therein we have for some $C>0$ and some $r\in (\frac{d}{2}+1,3)$
\begin{align}
\norm{w_t}_{\Wbf} \leq \exp\left(C\int_0^t \norm{u_\tau}_{H^r}  + \norm{u_\tau'}_{H^r} \dee\tau \right) \norm{u-u'}_{\Wbf}
\end{align}
and for some $q,C > 0$ (possibly a different $C$),
\begin{align}
\norm{w_t}_{\Hbf}^2 & \lesssim \exp\left(C\int_0^t \norm{u_\tau}_{H^r}  + \norm{u_\tau'}_{H^r} \dee\tau \right) \left(\sup_{0< s < t}(\norm{u_\tau}^q_{\Hbf} + \norm{u_\tau'}^q_{\Hbf})
\right)\norm{u-u'}_{\Hbf}^2.
\end{align}
The result then follows from Lemma \ref{lem:TwistBd}. 
\end{proof}

\subsection{Uniform mixing and enhanced dissipation}

In this section we will summarize the results of \cite{BBPS18, BBPS19I, BBPS19II}
which are used to prove Batchelor's law (Theorem \ref{thm:Batch}) and the other results of this paper. Throughout, assume $(u_t)$ solves \eqref{eq:NS-Abstract} and that Assumption \ref{ass:strong} holds. Consider the advection-diffusion equation with diffusivity $0 \leq \kappa \ll 1$, 
\begin{align}
   \partial_t \bar g_t + u_t \cdot \grad \bar g_t = \kappa \Delta \bar g_t \label{def:AD} 
 \end{align}
for fixed initial $\bar g_0 = g  \in L^2, \int g \,\dee x = 0$. This defines 
the following (random) two-parameter semigroup of linear operators on $L^2$: 
for $0 \leq s \leq t$, $\omega \in \Omega_W$ and initial $u = u_0 \in \Hbf$, define
$S^\kappa_{t, s}(\omega, u) : L^2 \to L^2$ the two time solution operator, satisfying $\bar g_t = S^\kappa_{t,s}(\omega,u) \bar g_{s}$. When starting from $s = 0$ we will write 
$S^\kappa_t(\omega, u) = S^\kappa_{t, 0}(\omega, u)$. Note that $S_0^\kappa(\omega, u) = I$,
the identity on $L^2$. Note as well the following cocycle property: for any $s, t > 0$, we have
\[
S^\kappa_{t + s}(\omega, u) = S^\kappa_s(\theta_t \omega, u_t)S^\kappa_t (\omega, u) \,,
\] 
where, $\theta_t : \Omega_W \to \Omega_W$ is the standard time-shift on Wiener space defined for each $\omega \in \Omega_W$ by
\[
	\theta_t\omega(\cdot) := \omega(\cdot+t) - \omega(t).
\]
This implies that we can write the two parameter semi-group $S_{t,s}(\omega,u)$ as
\begin{equation}\label{eq:SkewS}
	S_{t,s}^\kappa(\omega,u) = S^\kappa_{t}(\theta_s\omega,u_s).
\end{equation}
Note that for any $p \geq 1$, $S^\kappa_t$ extends in a natural way to a defined
mapping on $L^p$, with range contained in $L^p$. The same is true of Sobolev spaces $H^s$, $s > 0$. 

Several of the important results in this section involve random constants of the form $D : \Omega_W \times \Hbf \to \R_{\geq 1}$,
the $\P$-law of which are controlled in terms of $V(u)$. Since random constants of this type appear many times in this paper, we introduce the following definition.

\begin{definition}\label{defn:randConst}
Let $D : \Omega_W \times \Hbf \to \R_{\geq 1}$ be measurable. We say that $D$ has
$V$ \emph{bounded} $p$-\emph{th moment} if $\exists \beta \geq 0$ such that 
$\forall 0 < \eta < \eta_* $, we have for $V = V_{\beta, \eta}(u)$
\[
\E D^p(\cdot, u) \lesssim_\eta V(u).
\]
\end{definition}

The following Lemma is very important since it provides us some control on the fluctuations of the mixing time in terms of the Lyapunov function $V(u)$.
\begin{lemma} \label{lem:RandCon}
Let $D : \Omega_W \times \Hbf \to [1,\infty)$ have $V$-bounded $p$-th moment. 
Then, $\forall \lambda >0 $, $\exists K_\lambda > 0$ such that
\begin{align}\label{ineq:Dbd}
\EE D^p(\theta_t \omega, u_t) \lesssim_{p,\lambda} e^{-\lambda t}V(u) + K_\lambda.
\end{align}
\end{lemma}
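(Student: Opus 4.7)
The plan is to exploit two structural facts in concert: the Markov/independence structure of the Wiener shift relative to $\mathscr{F}_t^W$, and the super-Lyapunov drift estimate recorded in Remark \ref{rmk:SuperL}.

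First I would rewrite the expectation using independence. The solution $u_t$ is $\mathscr{F}_t^W$-measurable, while the shifted noise $\theta_t\omega$ is independent of $\mathscr{F}_t^W$ (this is just the definition of the Wiener shift). Conditioning on $\mathscr{F}_t^W$ therefore gives
\[
\EE D^p(\theta_t\omega,u_t) \;=\; \EE\!\left[\,\phi(u_t)\,\right], \qquad \text{where}\quad \phi(v) := \EE D^p(\cdot,v).
\]
Here the inner expectation is with respect to $\P_W$ of the fresh noise path $\theta_t\omega$.

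Next I would invoke the hypothesis on $D$: by Definition \ref{defn:randConst}, there exists $\beta\geq 0$ such that for every $0<\eta<\eta_*$ we have $\phi(v) \lesssim_\eta V_{\beta,\eta}(v)$. Fix any such $\eta$ (say, $\eta = \eta_*/2$) and write $V = V_{\beta,\eta}$. Then
\[
\EE D^p(\theta_t\omega,u_t) \;\lesssim_\eta\; \EE\,V(u_t) \;=\; P_t V(u).
\]

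Finally, I would apply the super-Lyapunov drift bound \eqref{ineq:supLDct} from Remark \ref{rmk:SuperL}, which says that for the Navier--Stokes Markov semigroup $(P_t)$ and for \emph{any} $\lambda > 0$ there exists $K_\lambda > 0$ with
\[
P_t V \;\leq\; e^{-\lambda t}\,V + K_\lambda.
\]
Combining this with the previous inequality and absorbing $\eta$ into the implicit constant produces the claim
\[
\EE D^p(\theta_t\omega,u_t) \;\lesssim_{p,\lambda}\; e^{-\lambda t} V(u) + K_\lambda,
\]
possibly after adjusting $K_\lambda$ to absorb the multiplicative constant.

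No step looks serious; the only subtlety is making sure the Lyapunov function obtained from the drift inequality \eqref{ineq:supLDct} is the \emph{same} $V_{\beta,\eta}$ used to control $\phi$. This is why the hypothesis is phrased for all $0 < \eta < \eta_*$: it gives us the freedom to choose $\eta$ small enough so that \eqref{ineq:supLDct} is directly applicable (which in Remark \ref{rmk:SuperL} was derived from Lemma \ref{lem:TwistBd} precisely in this regime). Once the choice of $\eta$ is fixed, the argument collapses to the three-line Markov-property/drift-condition computation above.
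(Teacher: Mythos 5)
Your argument is correct and is essentially identical to the paper's proof: condition on $\mathscr{F}_t^W$ using independence of the increment $\theta_t\omega$, apply the $V$-bounded moment hypothesis to reduce to $\EE V(u_t) = P_t V(u)$, and conclude with the drift bound \eqref{ineq:supLDct}. Your remark about matching the $\eta$ in $V_{\beta,\eta}$ with the regime where the drift condition holds is exactly the right point of care, and the definition's ``for all $0<\eta<\eta_*$'' quantifier handles it as you say.
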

\begin{proof}
Using, in sequence,
the tower property of conditional expectation, the fact that $u_t$ is $\mathscr{F}_t^W$ measurable and the increment $\theta_t\omega$ is independent of $\mathscr{F}_t^W$, and 
equations \eqref{ineq:Dbd} and \eqref{ineq:supLDct}, we have: 
\begin{align}
\EE D^p(\theta_t \omega, u_t) & = \EE\left(\EE \left[D^p(\theta_t \omega, u_t)| \mathscr{F}_t^W\right]\right)
 \lesssim \EE V(u_t)  \lesssim e^{-\lambda t} V(u) + K_\lambda \, . \qedhere
\end{align}
\end{proof}

The following Lemma is a useful corollary of \eqref{eq:Twistbd}. 

\begin{lemma}\label{lem:twist-cantelli}
For all $p \in [1,\infty)$, there exists a (deterministic) $C_0 >0$ and a random constant $D_0 : \Hbf \times \Omega_W \to \R_{\geq 1}$ with $V$-bounded $p$-th moment such that
\[
	\exp\left( \int_0^t \norm{\grad u_\tau}_{L^\infty} d\tau \right) \leq D_0(\omega, u) e^{C_0 t} \, .
\]
\end{lemma}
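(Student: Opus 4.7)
The plan is to reduce the statement to an application of Lemma \ref{lem:TwistBd} via Sobolev embedding, combined with a time-iteration argument using the Markov property of $(u_t)$.

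First, since $\sigma > d/2 + 3$ with $\sigma < \alpha - d/2$, one can fix $r \in (d/2 + 1, 3)$ for which Sobolev embedding gives a deterministic $C^* > 0$ with $\|\nabla u\|_{L^\infty} \leq C^* \|u\|_{\Hbf^r}$. Writing $G_t := \int_0^t \|u_\tau\|_{\Hbf^r}\, d\tau$, I would define
\[
D_0(\omega, u) := 1 \vee \sup_{t \geq 0} \exp\bigl(C^* G_t - C_0 t\bigr),
\]
where $C_0 > 0$ will be chosen below. By construction, $\exp(\int_0^t \|\nabla u_\tau\|_{L^\infty}\, d\tau) \leq \exp(C^* G_t) \leq D_0(\omega, u)\, e^{C_0 t}$ holds for all $t \geq 0$, so the pointwise bound is automatic; the work is in controlling $\EE D_0^p$.

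The key step is a Markov iteration of the twist bound. Fix $p \geq 1$, $\eta \in (0, \eta_*)$ and some $\beta \geq 0$ (to be chosen depending on $p$); for $V = V_{\beta, \eta}$, apply Lemma \ref{lem:TwistBd} with $T = 1$, $\gamma = 0$, and $C_0^L = p C^*$ to obtain a constant $A = A(p, \beta, \eta)$ with
\[
\EE_W\!\left[\exp(p C^* G_1)\, V(u_1)\right] \leq A\, V(u).
\]
Using the Markov property of $(u_t)$ and conditioning on $\mathscr{F}_n^W$, the same bound applied to the shifted process $(u_{n+t})$ iteratively yields $\EE_W[\exp(p C^* G_n)\, V(u_n)] \leq A^n V(u)$, and one final application of the twist bound on $[n, n+1]$ (combined with $V \geq 1$) promotes this to the supremum estimate
\[
\EE_W\!\left[\sup_{t \in [n, n+1]} \exp(p C^* G_t)\right] \leq A^{n+1} V(u).
\]
Choosing $C_0$ large enough that $A e^{-p C_0} < 1/2$ and summing over $n$ gives
\[
\EE_W D_0^p \leq 1 + \sum_{n \geq 0} e^{-p C_0 n}\, \EE_W\!\sup_{t \in [n, n+1]} \exp(p C^* G_t) \lesssim_{p,\eta} V(u),
\]
which is the required $V$-bounded $p$-th moment.

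The main technical obstacle is arranging that a \emph{single} deterministic $C_0$ and a single $\beta$ work simultaneously for all $\eta \in (0, \eta_*)$, since the constant $A = A(p, \beta, \eta)$ produced by Lemma \ref{lem:TwistBd} depends on $\eta$ (and could in principle degenerate as $\eta \uparrow \eta_*$). This is handled by noting that the constants in Lemma \ref{lem:TwistBd} are continuous in $\eta$ and by enlarging $C_0$ if necessary so that $A(p, \beta, \eta) e^{-pC_0} < 1/2$ across the relevant range of $\eta$, which is permissible because the definition of "$V$-bounded $p$-th moment" allows implicit constants to depend on $\eta$. Once this uniformity is in place, the two properties of $D_0$ follow immediately from the construction.
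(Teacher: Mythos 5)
Your argument is correct, but it is organized quite differently from the paper's. The paper's proof applies Lemma \ref{lem:TwistBd} together with Chebyshev's inequality to get an exponentially decaying tail for the event $\{\exp(2\int_0^t\|\nabla u_s\|_{L^\infty}\ds) > V(u)e^{4ct}\}$, invokes Borel--Cantelli to produce an almost-surely finite random integer $N(\omega,u)$ beyond which the desired bound holds with prefactor $V(u)$, and then defines $D_0 = \exp(\int_0^{N}\|\nabla u_s\|_{L^\infty}\ds)$, bounding its moment by Cauchy--Schwarz against the tail of $N$. You instead define $D_0$ directly as the running supremum of the deficiency $\exp(C^*G_t - C_0 t)$, making the pointwise bound tautological, and control $\EE D_0^p$ by iterating the one-step exponential twist estimate via the Markov property and summing a geometric series. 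Both routes rest entirely on Lemma \ref{lem:TwistBd}; yours has the advantage of delivering arbitrary $p$-th moments in one pass (the paper only writes out $p=1$), and the iteration could even be dispensed with, since the constants $C,c$ in Lemma \ref{lem:TwistBd} are $T$-independent and one may apply it directly on $[0,n+1]$ before summing. One caveat: the uniformity-in-$\eta$ issue you raise at the end is real but is not resolved by continuity alone (the range $(0,\eta_*)$ is not compact at $0$, and the constant $c(\eta)$ in Lemma \ref{lem:TwistBd} degenerates as $\eta\downarrow 0$); however, the paper's own proof takes $C_0 = 4c(\eta)$ and so carries exactly the same imprecision, so this should not be held against your approach.
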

\begin{proof}
We provide the proof when $p = 1$; other values of $p$ require straightforward adjustments.

Set $V = V_{0, \eta}$, where $\eta \in (0,\eta_*)$ is arbitrary. 
To start, note that by Lemma \ref{lem:TwistBd} and Chebyshev's inequality,
 $\exists c = c(\eta) > 0$ such that 
\begin{align}\label{eq:penguin}
	\P\left(  \exp\left( 2 \int_0^t\|\nabla u_s\|_{L^\infty}\ds\right) > V(u)e^{4ct}\right) \leq 
	\frac{\E  \exp\left(2 \int_0^t\|\nabla u_s\|_{L^\infty}\ds\right) }{V(u) e^{4 c t}} \lesssim_\eta
	e^{- 3 c t} \, .
\end{align}
By Borel-Cantelli, there exists $N(\omega,u) \geq 1$ with 
$\P\left(N(\cdot,u) \geq n\right)\lesssim_\eta e^{-3c n}$
such that 
\[
	\exp\left(\int_0^n\|\nabla u_s\|_{L^\infty}\ds\right) 
	\leq V(u)e^{4cn} \, \quad \text{for $n \geq N(\omega,u)$.}
\]
To bound when $n < N(\omega,u)$, we find
\[
	\exp\left(\int_0^n\|\nabla u_s\|_{L^\infty}\ds\right) \leq D_0(\omega,u) := \exp\left(\int_0^{N(\omega,u)}\|\nabla u_s\|_{L^\infty}\ds\right)
\]
and note that by Cauchy-Schwarz,
\begin{equation}
\begin{aligned}
	\E D_0(\cdot,u) &\leq \sum_{n}(\P(N = n))^{1/2} \left[ \E\exp\left(2 \int_0^{n}\|\nabla u_s\|_{L^\infty}\ds\right) \right]^{1/2}\\
	&\leq \sum_n  (e^{-3 c n})^{1/2} (V(u) e^{c n})^{1/2}  \leq V(u) \sum _n e^{-c n} \leqc V(u). \qedhere
\end{aligned}
\end{equation}
\end{proof}

With these preparations in place, we now state the main results of \cite{BBPS18, BBPS19I, BBPS19II}. 

\begin{theorem}[Theorem 1.2 in \cite{BBPS19II}] \label{thm:UniMix}
There exists $\kappa_0 > 0$ for which the following holds for all $\kappa \in [0,\kappa_0]$.
Let $(u_t)$ solve \eqref{eq:NS-Abstract} for an arbitrary initial condition $u_0 = u \in \Hbf$. 
For all $s > 0, p \geq 1$, there exists a deterministic $\gamma = \gamma(s, p) > 0$ (depending only on $s,p$ and the parameters $Q$, $\nu$ etc) and a random constant $D_{\kappa}(\omega,u): \Omega \times \Hbf \to [1,\infty)$ (also depending on $p, s$, as well as $\kappa$) such that for all $g\in H^s$,
\begin{align}
\norm{S^\kappa_t(\omega,u)g}_{H^{-s}} \leq D_\kappa(\omega,u) e^{-\gamma t} \norm{g}_{H^s} \, .
\end{align}
The random constant $D_\kappa$ has $V$-bounded $p$-th moment with implicit constant
independent of $\kappa$. 
\end{theorem}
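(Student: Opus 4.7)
My plan is to reduce the $H^{-s}$ mixing bound to a uniform-in-$\kappa$ geometric ergodicity statement for the two-point Markov process associated to the advection-diffusion equation, then verify that statement by a quantitative Harris-type theorem, leveraging the Lagrangian chaos and $\kappa=0$ mixing results from \cite{BBPS18,BBPS19I} as black boxes. First I would use the Feynman--Kac representation $g_t^\kappa(x) = \EE_B\, g(X_t^\kappa(\omega,u;x))$, where $X_t^\kappa$ is the backward stochastic flow driven by $u_t$ and an independent Brownian motion $B$ of intensity $\sqrt{2\kappa}$. Writing second moments as functionals of the pair $(X_t^{\kappa,1}(x), X_t^{\kappa,2}(y))$ of independent diffusive trajectories in the same velocity field, the decay of $\|S^\kappa_t g\|_{H^{-s}}$ reduces (after dualization against an $H^s$ test function and summation over Fourier modes with $|k|^{-2s}$ weights) to decorrelation estimates for the two-point Markov semigroup $P_t^{(\kappa,2)}$ on $\Hbf \times (\T^d)^2$.

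Next I would apply a weak Harris theorem to $P_t^{(\kappa,2)}$, using the weighted Lyapunov function $\tilde V(u,x,y) = V_{\beta,\eta}(u)\, d(x,y)^{-p}$ for some small $p>0$, where $d$ denotes torus distance. Two ingredients are needed, both uniform in $\kappa \in [0,\kappa_0]$: a drift estimate $P_T^{(\kappa,2)} \tilde V \leq \delta \tilde V + C$ with $\delta < 1$, and a small-set minorization on level sets $\{\tilde V \leq R\}$. The drift reduces to showing that two particles separate exponentially in the moment sense, i.e.\ that a moment Lyapunov exponent $\Lambda(p) > 0$ exists and depends continuously on $\kappa$ near $\kappa=0$; the $\kappa=0$ case is precisely the content of \cite{BBPS19I}, and the perturbation argument amounts to a continuity statement for the spectral radius of the projective two-point semigroup. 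The minorization is actually easier for $\kappa > 0$ since one has genuine elliptic diffusion away from the diagonal, and for $\kappa = 0$ it is inherited from the strong Feller/controllability theory developed in \cite{BBPS18,BBPS19I}.

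The Harris estimates deliver $\tilde V$-weighted total variation contraction with rate $\gamma > 0$ independent of $\kappa$, which, after dualization and summation, yields moment bounds of the form $\EE \|S^\kappa_t(\omega,u) g\|_{H^{-s}}^q \lesssim V(u)\, e^{-\gamma' q t} \|g\|_{H^s}^q$. To convert this into the almost-sure statement with random constant $D_\kappa$, I would apply the Borel--Cantelli/Chebyshev argument of Lemma \ref{lem:twist-cantelli}: exploit the moment estimate at integer times to control the almost-sure exceedance times and absorb the finitely many bad increments into $D_\kappa$, then use Lemma \ref{lem:TwistBd} to bound the $V$-weighted moments of $D_\kappa$ uniformly in $\kappa$.

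The main obstacle is the uniform-in-$\kappa$ drift condition for $d(x,y)^{-p}$. Near the diagonal, the Brownian perturbation $\sqrt{2\kappa}\,dB_t$ pushes the two points together at a rate competing with the positive Lyapunov exponent of the zero-diffusivity flow, and there is also a singular contribution from the Laplacian acting on $d(x,y)^{-p}$. The key technical lemma I would need to prove is: there exist $p_0, \kappa_0 > 0$ and $\gamma_0 > 0$ such that $\EE\, d(X_t^{\kappa,1},X_t^{\kappa,2})^{-p} \leq C\, e^{-\gamma_0 t}\, d(x,y)^{-p}$ for all $p \in (0,p_0)$, $\kappa \in [0,\kappa_0]$, uniformly in starting pairs. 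This requires a careful analysis of the projectivized two-point motion at small separations, essentially a small-diffusion perturbation of the hyperbolic dynamics identified in \cite{BBPS19I}; once it is in hand, the rest of the Harris machinery and the passage to $H^{-s}$ decay proceed as outlined.
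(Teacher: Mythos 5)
This theorem is not proved in the present paper: it is imported verbatim as Theorem 1.2 of \cite{BBPS19II}, with the paper only recording that the $\kappa=0$ case is from \cite{BBPS19I} and that both rest on the Lagrangian chaos result of \cite{BBPS18}. So there is no in-paper proof to compare against, and your proposal should be judged as a reconstruction of the argument of the cited works. On that score your outline is faithful to the actual strategy: the Feynman--Kac representation and the reduction of $H^{-s}$ decay to decorrelation for the two-point process $(u_t, X_t^{\kappa,1}, X_t^{\kappa,2})$, the Harris-type geometric ergodicity with a Lyapunov function of the form $V_{\beta,\eta}(u)\,d(x,y)^{-p}$, the role of the moment Lyapunov exponent $\Lambda(p)$ near the diagonal, and the Borel--Cantelli/Chebyshev passage from annealed moment decay to the quenched statement with a $V$-moment-bounded random constant $D_\kappa$ (as in Lemma \ref{lem:twist-cantelli}, using Lemma \ref{lem:TwistBd}) are all genuine components of the proofs in \cite{BBPS19I, BBPS19II}.

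The gap is that the proposal defers essentially the entire technical content to the ``key technical lemma'' $\EE\, d(X_t^{\kappa,1},X_t^{\kappa,2})^{-p} \leq C e^{-\gamma_0 t} d(x,y)^{-p}$ uniformly in $\kappa$, and the mechanism you suggest for it --- ``a continuity statement for the spectral radius of the projective two-point semigroup'' --- does not come for free. The $\kappa$-perturbation of the two-point generator is singular (an unbounded operator is being switched on), so spectral continuity is not a soft consequence of the $\kappa=0$ spectral gap; establishing a spectral gap for the relevant twisted (Feynman--Kac) semigroups with constants uniform over $\kappa\in[0,\kappa_0]$, together with uniform minorization and drift on the blow-up of the diagonal, is the bulk of \cite{BBPS19II} and in turn leans on the full machinery of \cite{BBPS18, BBPS19I} (hypoellipticity/strong Feller for the projective process, positivity of $\Lambda(p)$ for small $p$, control near the diagonal where the additive noise of size $\sqrt{2\kappa}$ competes with the hyperbolicity). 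Your plan correctly identifies where the difficulty lies, but as written it is an architecture for the proof rather than a proof; within the scope of this paper the correct move is simply to cite the result, as the authors do.
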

Theorem \ref{thm:UniMix} is proved in \cite{BBPS19I} for $\kappa = 0$ and \cite{BBPS19II} for 
$\kappa \in [0,\kappa_0]$. Both papers rely heavily on Lagrangian chaos as proved in \cite{BBPS18}. From Theorem \ref{thm:UniMix}, it is relatively straightforward to prove the following
enhanced dissipation result. 
\begin{theorem}[Theorem 1.3 in \cite{BBPS19II}] \label{thm:ED}
Let $\kappa_0 > 0$ and $\gamma = \gamma(1,p)$ be as in Theorem \ref{thm:UniMix}, where
 $p \geq 2$ is arbitrary. Let $(u_t)$ solve \eqref{eq:NS-Abstract} for an arbitrary initial condition $u_0 = u \in \Hbf$. Then, there is a random constant $D_\kappa'(\omega,u)$ such that for all $g\in L^2$
\begin{align}
\norm{S^\kappa_t(\omega,u)g}_{L^2} \leq \min\{ 1 , D_{\kappa}'(\omega,u) \kappa^{-1} e^{- \gamma t} \} \norm{g}_{L^2}. \label{ineq:ED2}
\end{align}
The random constant $D_\kappa'$ has $V$-bounded $p$-th moment with implicit constant
independent of $\kappa$. 
\end{theorem}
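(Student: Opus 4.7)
The strategy combines Theorem \ref{thm:UniMix} with the parabolic energy identity for \eqref{def:AD}. Testing against $\bar g_s = S^\kappa_s(\omega,u) g$ and using $\nabla\cdot u_s \equiv 0$ yields
\[
\|S^\kappa_t g\|_{L^2}^2 + 2\kappa \int_0^t \|\nabla S^\kappa_s g\|_{L^2}^2\,\ds = \|g\|_{L^2}^2,
\]
giving monotonicity $\|S^\kappa_t g\|_{L^2} \leq \|g\|_{L^2}$, which handles the trivial half of the minimum.

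For the exponential decay, fix $t > 0$ and set $T_1 = t/3$, $T_2 = 2t/3$. The energy identity forces
\[
\int_0^{T_1} \|\nabla S^\kappa_s g\|_{L^2}^2\,\ds \leq \frac{1}{2\kappa}\|g\|_{L^2}^2, \qquad \int_{T_2}^t \|\nabla S^\kappa_s g\|_{L^2}^2\,\ds \leq \frac{1}{2\kappa}\|g\|_{L^2}^2,
\]
so we may define $\mathscr{F}^W$-stopping times $t_* \in [0,T_1]$, $t_{**} \in [T_2,t]$ as the first times at which $\|\nabla S^\kappa_s g\|_{L^2}^2 \leq (\kappa T_1)^{-1}\|g\|_{L^2}^2$, both of which are well-defined by Chebyshev. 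In particular, $\|S^\kappa_{t_*} g\|_{H^1}, \|S^\kappa_{t_{**}} g\|_{H^1} \lesssim (\kappa t)^{-1/2}\|g\|_{L^2}$ (using monotonicity of the $L^2$ norm for $t_{**}$). Theorem \ref{thm:UniMix} at regularity $s=1$ combined with the cocycle \eqref{eq:SkewS} from time $t_*$ then yields
\[
\|S^\kappa_{t_{**}} g\|_{H^{-1}} \leq D_\kappa(\theta_{t_*}\omega, u_{t_*})\, e^{-\gamma(t_{**}-t_*)}\|S^\kappa_{t_*} g\|_{H^1} \lesssim D_\kappa(\theta_{t_*}\omega, u_{t_*})\, e^{-\gamma t/3}(\kappa t)^{-1/2}\|g\|_{L^2}.
\]
The interpolation $\|f\|_{L^2}^2 \leq \|f\|_{H^{-1}}\|f\|_{H^1}$ applied at $t_{**}$, together with the monotonicity $\|S^\kappa_t g\|_{L^2} \leq \|S^\kappa_{t_{**}} g\|_{L^2}$, then yields
\[
\|S^\kappa_t g\|_{L^2}^2 \lesssim D_\kappa(\theta_{t_*}\omega, u_{t_*})\, e^{-\gamma t/3} (\kappa t)^{-1}\|g\|_{L^2}^2.
\]
Absorbing the polynomial factor $(\kappa t)^{-1}$ into a slightly smaller exponential rate, and taking the $\min$ with $1$ to cover the small-$t$ regime where the polynomial prefactor exceeds $1$, delivers the claimed bound with random constant $D_\kappa'(\omega,u) \lesssim \sqrt{D_\kappa(\theta_{t_*}\omega, u_{t_*})}$.

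The main technical obstacle is verifying that $D_\kappa'$ has $V$-bounded $p$-th moment uniformly in $\kappa$: since $t_*$ is a random stopping time, Lemma \ref{lem:RandCon} does not directly apply. The way around this is the strong Markov property: conditioning on $\mathscr{F}^W_{t_*}$, the shifted noise $\theta_{t_*}\omega$ is independent of $u_{t_*}$, so the uniform-in-$\kappa$ $V$-bounded moment bound in Theorem \ref{thm:UniMix} gives $\EE[D_\kappa^p(\theta_{t_*}\omega, u_{t_*}) \mid \mathscr{F}^W_{t_*}] \lesssim V(u_{t_*})$ with constants independent of $\kappa$. The iterated drift bound \eqref{ineq:supLDct} from Remark \ref{rmk:SuperL} then yields $\EE V(u_{t_*}) \lesssim V(u)$ uniformly in $t_* \leq T_1$, and Jensen's inequality transfers the $V$-bounded $2p$-th moment of $D_\kappa$ to a $V$-bounded $p$-th moment of $\sqrt{D_\kappa(\theta_{t_*}\omega,u_{t_*})}$, completing the proof.
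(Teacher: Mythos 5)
First, a caveat: the paper does not actually prove this statement --- Theorem \ref{thm:ED} is imported verbatim as Theorem 1.3 of \cite{BBPS19II}, with only the remark that it is ``relatively straightforward'' to deduce from Theorem \ref{thm:UniMix}. So there is no internal proof to compare against. Judged on its own merits, your overall strategy (energy identity $\Rightarrow$ $L^2$ monotonicity and an integrated $H^1$ bound; pigeonhole to find times $t_*, t_{**}$ with controlled $H^1$ norm; Theorem \ref{thm:UniMix} on $[t_*,t_{**}]$; the interpolation $\|f\|_{L^2}^2 \leq \|f\|_{H^{-1}}\|f\|_{H^1}$) is exactly the natural route from uniform mixing to enhanced dissipation, and the deterministic/pathwise part of the argument is sound (modulo the harmless facts that you obtain rate $\gamma/6$ rather than $\gamma$ and prefactor $\kappa^{-1/2}$ rather than $\kappa^{-1}$).

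The genuine gap is in the step you yourself flag as the main obstacle. The theorem requires a \emph{single} random constant $D'_\kappa(\omega,u)$ for which \eqref{ineq:ED2} holds simultaneously for all $t>0$ and all $g\in L^2$. Your construction produces, for each fixed $t$ and each fixed $g$, a prefactor $\sqrt{D_\kappa(\theta_{t_*}\omega,u_{t_*})}$ in which $t_*=t_*(t,g,\omega)$ depends on both $t$ (through $T_1=t/3$) and on $g$ (through the level set of $\|\nabla S^\kappa_s g\|_{L^2}$). Showing that each member of this family has $V$-bounded $p$-th moment is not the same as exhibiting one random variable that dominates all of them; closing this requires something like $D'_\kappa := \sup_{s\geq 0} e^{-\epsilon s} D_\kappa(\theta_s\omega,u_s)$ together with a Borel--Cantelli/moment argument in the spirit of Lemma \ref{lem:twist-cantelli}, at the cost of a further loss in the exponential rate; none of this appears in your write-up. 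Separately, your assertion that \eqref{ineq:supLDct} yields $\EE V(u_{t_*}) \lesssim V(u)$ is not justified: \eqref{ineq:supLDct} is a bound on $P_tV$ at \emph{deterministic} times, and evaluating $V$ at a stopping time requires either an optional-stopping/supermartingale argument at the level of the generator (not stated in the paper) or the pathwise supremum bound of Lemma \ref{lem:TwistBd}, which introduces a factor $e^{ct/3}$ with $c$ not known to be small relative to $\gamma$ and hence cannot simply be absorbed. Both points are repairable, but as written the probabilistic half of the proof does not go through.
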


\begin{theorem}[Theorem 1.5 in \cite{BBPS19II}]\label{thm:optTimeScale}
In the setting of Theorem \ref{thm:UniMix}, let
\[
\tau_* = \tau_\ast(\omega,u,g) = \inf\set{t: \norm{g_t}_{L^2} < \tfrac{1}{2}\|g\|_{L^2}} \, .
\]
Then, there exists a $\kappa_0 > 0$ a sufficiently small universal constant such that
for all $\kappa\in (0,\kappa_0]$, one has
\[
	\tau_\ast(\omega,u,g) \geq \delta(g,u,\omega) |\log{\kappa}| \quad\text{with probability}\quad 1 \, , 
\]
where $\delta (g, u, \omega) \in (0,1)$ is a $\kappa$-independent random constant
with the property that there exists a $\beta \geq 1$ such that for all $p \geq 1$ and $\eta>0$ with $V(u) = V_{\beta,\eta}(u)$ we have
\begin{align}
\E \delta^{-p}(\cdot,u) \leqc_{p,\eta, \beta} \frac{\|g\|_{H^1}^p}{\|g\|_{L^2}^p} V(u)^p \, . \label{ineq:deltap}
\end{align}
\end{theorem}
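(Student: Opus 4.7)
The plan is to derive the lower bound by a direct dissipation-energy estimate combined with the Gronwall-type control on $\|\nabla u_t\|_{L^\infty}$ furnished by Lemma \ref{lem:twist-cantelli}. Throughout, $g_t = g_t^\kappa$ denotes the solution to the homogeneous (source-free) equation $\partial_t g + u_t\cdot\nabla g = \kappa\Delta g$ with initial datum $g$, so that $g_t = S_t^\kappa(\omega,u)g$.

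First, I would write down the $L^2$ energy balance
\[
\|g_t\|_{L^2}^2 = \|g\|_{L^2}^2 - 2\kappa\int_0^t\|\nabla g_s\|_{L^2}^2\,ds,
\]
which shows that $t\mapsto \|g_t\|_{L^2}$ is monotone decreasing and that $\tau_*$ is the first time at which $\kappa\int_0^t\|\nabla g_s\|^2\,ds$ exceeds $\tfrac{3}{8}\|g\|_{L^2}^2$. Hence to lower bound $\tau_*$ it suffices to upper bound $\|\nabla g_s\|_{L^2}^2$ for $s\leq t$.

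Next, I would apply the elementary energy estimate on $\nabla g$. Differentiating the PDE and testing against $\nabla g$, one uses incompressibility of $u$ to discard the transport term and drops the (favorable) diffusive term to obtain
\[
\tfrac{d}{dt}\|\nabla g_t\|_{L^2}^2 \leq 2\|\nabla u_t\|_{L^\infty}\|\nabla g_t\|_{L^2}^2,
\]
so that by Gronwall, $\|\nabla g_t\|_{L^2}^2 \leq \|\nabla g\|_{L^2}^2\exp\bigl(2\int_0^t\|\nabla u_s\|_{L^\infty}\,ds\bigr)$. Invoking Lemma \ref{lem:twist-cantelli} with $p$ arbitrary, there exist a deterministic $C_0 > 0$ and a random $D_0(\omega,u)\geq 1$ with $V$-bounded $p$-th moment such that $\exp\bigl(\int_0^t\|\nabla u_s\|_{L^\infty}\,ds\bigr)\leq D_0 e^{C_0 t}$. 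Integrating and substituting into the energy identity yields
\[
\|g_t\|_{L^2}^2 \geq \|g\|_{L^2}^2 - \frac{\kappa D_0^2\|\nabla g\|_{L^2}^2}{C_0}\bigl(e^{2C_0 t}-1\bigr),
\]
so $\|g_t\|_{L^2}^2 \geq \tfrac14\|g\|_{L^2}^2$ is guaranteed whenever
\[
t \leq T_*(\omega,u,g,\kappa) := \frac{1}{2C_0}\log\!\left(1 + \frac{3C_0\|g\|_{L^2}^2}{4\kappa D_0^2\|\nabla g\|_{L^2}^2}\right).
\]
In particular, $\tau_*\geq T_*$.

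The final and most delicate step is to manufacture the $\kappa$-independent random constant $\delta\in(0,1)$ with $\tau_*\geq \delta|\log\kappa|$ and with the claimed moment bound. Fix a deterministic $\kappa_0$ small (depending only on $C_0$) and set $L_0 = |\log\kappa_0|$. When $D_0\|\nabla g\|_{L^2}/\|g\|_{L^2}$ is moderate, the argument of the logarithm in $T_*$ is $\gg 1$ for all $\kappa\leq\kappa_0$ and $T_*\approx |\log\kappa|/(2C_0)$, so a universal $\delta = 1/(4C_0)$ suffices. When this ratio is large, the logarithm degenerates near $\kappa=\kappa_0$, and I would choose $\delta$ logarithmically sensitive to the random quantity, roughly
\[
\delta(\omega,u,g) \asymp \min\!\left(\frac{1}{4C_0},\ \frac{c}{1+\log\!\bigl(1+D_0\,\|g\|_{H^1}/\|g\|_{L^2}\bigr)}\right),
\]
which makes $\delta|\log\kappa|\leq T_*$ uniformly in $\kappa\leq\kappa_0$ by virtue of the monotonicity of $\kappa\mapsto T_*/|\log\kappa|$. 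The moment bound then follows because $\log(1+x)\leq c_\epsilon x^\epsilon$ for any $\epsilon>0$, so $\E\delta^{-p} \lesssim_\epsilon 1 + \E D_0^{\epsilon p}(\|g\|_{H^1}/\|g\|_{L^2})^{\epsilon p}$; choosing $\epsilon$ appropriately and applying Lemma \ref{lem:twist-cantelli} gives $\E\delta^{-p}\lesssim_{p,\eta,\beta}(\|g\|_{H^1}/\|g\|_{L^2})^p V(u)^p$ as claimed. The main obstacle is this bookkeeping in the fourth step: balancing how small $\delta$ must be to keep $\delta|\log\kappa|\leq T_*$ on the bad event $\{D_0\|g\|_{H^1}/\|g\|_{L^2}\gg 1\}$ with how large $\delta^{-p}$ is allowed to be to retain the polynomial moment bound in the stated form.
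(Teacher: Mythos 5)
This theorem is imported from \cite{BBPS19II} rather than proved in the present paper, so there is no in-text proof to compare against; but your strategy --- the exact $L^2$ balance $\|g_t\|_{L^2}^2 = \|g\|_{L^2}^2 - 2\kappa\int_0^t\|\nabla g_s\|_{L^2}^2\,ds$ combined with the Gr\"onwall/Lemma \ref{lem:twist-cantelli} bound $\|\nabla g_t\|_{L^2}\leq D_0 e^{C_0 t}\|\nabla g\|_{L^2}$ (which is exactly Lemma \ref{lem:H1grwth}) --- is the natural route, and it correctly yields the pathwise lower bound $\tau_*\geq T_* := \frac{1}{2C_0}\log\bigl(1+\frac{A}{\kappa}\bigr)$ with $A\asymp \|g\|_{L^2}^2/(D_0^2\|\nabla g\|_{L^2}^2)$. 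Steps one through three are fine.

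The gap is in your fourth step. The map $\kappa\mapsto T_*/|\log\kappa|$ is indeed monotone, but it \emph{decreases as $\kappa$ increases}, so its infimum over $(0,\kappa_0]$ sits at $\kappa=\kappa_0$ and equals $\frac{1}{2C_0|\log\kappa_0|}\log\bigl(1+\tfrac{A}{\kappa_0}\bigr)\asymp \min(1,A)\asymp \min(1,R^{-2})$, where $R:=D_0\|g\|_{H^1}/\|g\|_{L^2}$. Any admissible $\kappa$-independent $\delta$ extracted from $T_*$ must therefore satisfy $\delta\lesssim R^{-2}$ on the bad event, whereas your choice $\delta\asymp (1+\log(1+R))^{-1}$ decays only logarithmically in $R$: at $\kappa=\kappa_0$ one has $T_*\asymp R^{-2}/\kappa_0$ while $\delta|\log\kappa_0|\asymp (\log R)^{-1}$, so the required inequality $\delta|\log\kappa|\leq T_*$ fails badly for large $R$. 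This cannot be repaired by sharpening constants, since $T_*$ is essentially saturated by data concentrated on a single Fourier mode of frequency $N\gg \kappa_0^{-1/2}$, for which $\tau_*\lesssim (\kappa_0 N^2)^{-1}$. Taking the correct choice $\delta\asymp\min(c_0,R^{-2})$ salvages the pointwise bound but degrades the moment estimate to $\E\,\delta^{-p}\lesssim_p \bigl(\|g\|_{H^1}/\|g\|_{L^2}\bigr)^{2p}\,\E D_0^{2p}\lesssim \bigl(\|g\|_{H^1}/\|g\|_{L^2}\bigr)^{2p}V(u)$, i.e.\ exponent $2p$ rather than the stated $p$ on the ratio $\|g\|_{H^1}/\|g\|_{L^2}$ (the $V(u)$ versus $V(u)^p$ discrepancy is harmless since $V\geq 1$). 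That weaker bound does suffice for the only application in this paper, Lemma \ref{lem:LowBdMain}, where $g=b$ is a fixed smooth source and the ratio is an absolute constant; but it does not recover \eqref{ineq:deltap} as stated, and obtaining the exponent $p$ requires an input beyond the plain $H^1$ growth estimate.
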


\subsection{Unique stationary measure for $(u_t, g_t^\kappa)$}\label{subsec:unqiueness}

We provide here an argument proving uniqueness of the stationary measure for the 
passive scalar process $(u_t, g_t^\kappa)$ on $\Hbf \times L^2$. The following argument is quite general, and applies in a variety of cases outside the scope of the main results; 
see Remark \ref{rmk:generalErgPSprocess} below. 

\begin{proposition}\label{prop:uniqueStatMeasPSP}
For any $\kappa > 0$, the Markov process $(u_t, g_t^\kappa)$ admits a unique
stationary measure $\mu^\kappa$ on $\Hbf \times L^2$. 
\end{proposition}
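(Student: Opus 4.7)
My plan is to prove existence via a standard Krylov--Bogoliubov argument and uniqueness via a synchronous coupling that exploits the enhanced dissipation bound of Theorem~\ref{thm:ED}. The structural observation driving uniqueness is that \eqref{eq:scalar-intro} is \emph{linear} in the scalar, so the difference of two solutions driven by the same velocity field and the same $\beta$-noise satisfies the source-free, homogeneous advection-diffusion equation, which by Theorem~\ref{thm:ED} decays exponentially in $L^2$ uniformly in $\kappa$.

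For existence, I would start from any $(u_0,g_0)\in\Hbf\times L^2$ and consider the Ces\`aro averages $\tfrac{1}{T}\int_0^T P_t^{\kappa,*}\delta_{(u_0,g_0)}\,dt$. Tightness of the $u$-marginal follows from the super-Lyapunov drift \eqref{ineq:supLDct} together with the compact embedding of $V$-sublevel sets in $\Hbf$. Tightness of the $g$-marginal follows from It\^o's formula applied to $\|g_t^\kappa\|_{L^2}^2$, which yields
\[
\frac{1}{T}\int_0^T \EE\|\nabla g_s^\kappa\|_{L^2}^2 \, ds \;\leq\; \frac{\chi}{2\kappa}+\frac{\|g_0\|_{L^2}^2}{2\kappa T},
\]
combined with the compact embedding $H^1\hookrightarrow L^2$ and Markov's inequality. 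Standard well-posedness and continuous dependence of \eqref{eq:scalar-intro} on $(u_0,g_0)$ make $P_t^\kappa$ Feller on $\Hbf\times L^2$, so any weak-$*$ subsequential limit is a stationary measure.

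For uniqueness, let $\mu_1^\kappa,\mu_2^\kappa$ be two stationary measures. Each projects to a stationary measure for $(u_t)$, so Proposition~\ref{prop:uniqueStatMeasNSE} forces both $u$-projections to equal $\mu$. I would disintegrate $\mu_i^\kappa(du,dg)=\mu(du)\,\nu_i(u,dg)$ and construct the synchronous coupling: sample $u_0\sim\mu$, draw $g_0^1\sim\nu_1(u_0,\cdot)$ and $g_0^2\sim\nu_2(u_0,\cdot)$ conditionally independently given $u_0$, and run both copies forward against a single realization of $(W_t,\beta_t)$. Stationarity of each $\mu_i^\kappa$ guarantees $(u_t,g_t^i)\sim\mu_i^\kappa$ for every $t\geq 0$. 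By linearity, $h_t := g_t^1-g_t^2$ solves the homogeneous advection-diffusion equation with datum $h_0\in L^2$, and Theorem~\ref{thm:ED} gives
\[
\|h_t\|_{L^2}\;\leq\;\min\bigl\{1,\,D_\kappa'(\omega,u_0)\kappa^{-1}e^{-\gamma t}\bigr\}\|h_0\|_{L^2},
\]
which tends to $0$ almost surely since $D_\kappa'<\infty$ a.s. Hence for any bounded uniformly continuous $\phi:\Hbf\times L^2\to\R$,
\[
\int\phi\,d\mu_1^\kappa-\int\phi\,d\mu_2^\kappa = \EE\bigl[\phi(u_t,g_t^1)-\phi(u_t,g_t^2)\bigr]\;\xrightarrow{t\to\infty}\;0
\]
by dominated convergence (boundedness of $\phi$ and a.s.\ convergence of $h_t$ to $0$ in $L^2$). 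Since bounded uniformly continuous functions are measure-determining on the Polish space $\Hbf\times L^2$, this forces $\mu_1^\kappa=\mu_2^\kappa$.

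The main technical care lies in the measurable disintegration of $\mu_i^\kappa$ over $\mu$ and the verification that the synchronous coupling preserves the marginal laws at every fixed time; both are standard in the Polish-space setting but worth stating explicitly because $\Hbf$ is infinite-dimensional. An attractive feature of this strategy is that it bypasses any a priori moment bounds on $\|g\|_{L^2}$ under $\mu_i^\kappa$ --- the restriction to bounded uniformly continuous test functions eliminates integrability concerns --- so the $L^2$ balance in Proposition~\ref{prop:StatMes} is derived \emph{after} uniqueness from It\^o's formula and Birkhoff's theorem.
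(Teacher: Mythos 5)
Your proof is correct, and for the uniqueness part it takes a genuinely different (though closely related) route from the paper. Both arguments rest on the same two pillars: uniqueness of the stationary measure $\mu$ for the velocity process alone (Proposition~\ref{prop:uniqueStatMeasNSE}), and the fact that the difference of two scalar solutions driven by the same velocity and the same $\beta$-noise solves the homogeneous equation and decays in $L^2$ by Theorem~\ref{thm:ED}. Where you differ is in the packaging: the paper reduces to \emph{ergodic} stationary measures via the ergodic decomposition theorem, invokes Birkhoff to produce full-measure sets of ``generic'' initial data for each measure, and then uses the fact that both measures project to $\mu$ to find a \emph{single} velocity $u$ generic for both, comparing the two time averages along trajectories started from $(u,g)$ and $(u,h)$. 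You instead disintegrate both measures over the common $u$-marginal and build a synchronous coupling, concluding by dominated convergence against bounded uniformly continuous test functions. Your route avoids the ergodic decomposition and Birkhoff's theorem entirely, at the price of the (standard, but worth stating) measurable disintegration and the verification that the coupling preserves both marginal laws at each time; the paper's route avoids constructing any coupling but needs the reduction to ergodic measures. Both are sound, and your observation that the restriction to bounded test functions sidesteps any a priori moment bounds on $\|g\|_{L^2}$ is the same point the paper exploits implicitly by working with bounded Lipschitz $\psi$. One further remark: the paper's proof of Proposition~\ref{prop:uniqueStatMeasPSP} addresses only uniqueness and leaves existence implicit, whereas your Krylov--Bogoliubov argument (tightness of the $u$-marginal from the drift condition, tightness of the $g$-marginal from the It\^o energy balance and the compact embedding $H^1\hookrightarrow L^2$, plus the Feller property for $\kappa>0$) supplies that step explicitly and correctly.
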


\newcommand{\Gc}{\mathcal{G}}

\begin{proof}
It suffices to show that any two ergodic stationary measures
$\mu_1^\kappa, \mu_2^\kappa$ for $(u_t, g_t^\kappa)$ coincide. 
By standard ergodic theory for random dynamical systems \cite{kifer2012ergodic}
and the ergodic decomposition theorem (see, e.g., \cite{einsiedler2013ergodic}),
uniqueness of ergodic stationary measures for $(u_t, g_t^\kappa)$ implies uniqueness of
stationary measures. 

Assume now that $\mu_i^\kappa$ are two ergodic stationary measures, $i = 1,2$. 
To prove $\mu_1^\kappa = \mu_2^\kappa$, it suffices to show 
that for each bounded, globally Lipschitz $\psi : \Hbf \times L^2 \to \R$, we have
that $\int \psi\, \dee \mu_1^\kappa = \int \psi\, \dee \mu_2^\kappa$. Without loss of generality, we can assume $\|\psi\|_{\mathrm{Lip}}=1$.

By ergodicity of $\mu^\kappa_i$ 
and the Birkhoff ergodic theorem, we have the following for $i = 1,2$: there 
is a set $\hat \Gc_i = \hat \Gc_i(\psi) \subset \Hbf \times L^2$ of full $\mu_i^\kappa$-measure
such that for all initial $(u,g) \in \hat \Gc_i$, we have
\begin{align}
\lim_{n \to \infty} \frac{1}{n} \sum_{t = 0}^{n-1} \psi(u_t, g_t^\kappa) = \int \psi\,\dee \mu_i^\kappa
\end{align}
with probability 1.
Next, observe that $\mu_i^\kappa$ projects to the unique stationary measure $\mu$ 
for the $(u_t)$ process (Proposition \ref{prop:uniqueStatMeasNSE}). It follows from Fubini's theorem that for $i = 1,2$, we have $\mu(\Gc_i) = 1$, where
$\Gc_i$ is the projection of $\hat \Gc_i$ to $\Hbf$, i.e., 
$\Gc_i = \{ u \in \Hbf : \exists g \in L^2 \text{ such that } (u, g ) \in \hat \Gc_i\}$. 
Since $\mu(\Gc_i) = 1, i = 1,2$, we conclude that $\Gc_1 \cap \Gc_2$ is nonempty. Fix
$u \in \Gc_1 \cap \Gc_2$ and let $g,h \in L^2$ be such that $(u, g) \in \hat \Gc_1$,
$(u, h) \in \hat \Gc_2$. Observe that 
\[
g_t - h_t = S^\kappa_t(\omega, u) (g - h) \, , 
\]
i.e., $g_t - h_t$ solves \eqref{def:AD}. 
Theorem \ref{thm:ED} implies $\| g_t - h_t\|_{L^2} \to 0$ 
as $t \to \infty$. Given $\delta > 0$, let $N_0 = N_0(\delta')$ be such that
$\| g_t - h_t\|_{L^2} < \delta$ for all $t \geq N_0$. 

With these preparations in place, fix $u \in \Gc_1 \cap \Gc_2$ and $g, h \in L^2$ such that
$(u,g) \in \hat \Gc_1, (u,h) \in \hat \Gc_2$. 
With $\delta > 0$ as above, let $N_0 = N_0(\delta)$. We have
\begin{align*}
\left| \int \psi\, \dee \mu_1^\kappa - \int \psi\, \dee \mu_2^\kappa\right| \leq
\lim_{n \to \infty} \frac{1}{n} \sum_{t  =0}^{n-1} |\psi(u_t, g_t^\kappa) - \psi(u_t, h_t^\kappa)| 
= \lim_{n \to \infty} \frac{1}{n} \sum_{t = N_0}^{n-1} |\psi(u_t, g_t^\kappa) - \psi(u_t, h_t^\kappa)| 
\end{align*}
Each summand on the RHS is $\leq \delta$, and so the whole limit is $\leq \delta$. 
Since $\delta > 0$ was arbitrary, this completes the proof. 
\end{proof}

\begin{remark}\label{rmk:generalErgPSprocess}
The proof given above has two main ingredients: (1) uniqueness for the stationary measure
$\mu$ on $\Hbf$, and (2) $L^2$ 
dissipation estimates for $S^\kappa_t(\omega, u)$ for $\kappa > 0$. Item (1) is known to 
hold for 2D Navier-Stokes under very weak nondegeneracy condition on the noise \cite{HM06}, while item (2) holds for a wide variety of fluid models by standard
estimates. Thus, the proof given above is applicable at a much higher level of generality
than that of Assumption \ref{ass:strong}, including the setting of 2D Navier-Stokes with
``truly hypoelliptic'' forcing as in \cite{HM06}. 
\end{remark}

\section{Proof of Batchelor's law}\label{sec:Batchelor-Proof}

In this section we prove the main result of this paper, 
Theorem \ref{thm:Batch}. To summarize the main approach, 
observe that by uniqueness of the stationary measure $\mu^\kappa$
for \eqref{eq:scalar-intro} (Proposition \ref{prop:uniqueStatMeasPSP}), the Birkhoff ergodic theorem implies
\[
\E_{\mu^\kappa} \| \Pi_{\leq N} g \|_{L^2}^{2} = \lim_{T \to \infty}
\frac{1}{T} \int_0^T \| \Pi_{\leq N} g_t^\kappa \|_{L^2}^{2} \dt
\]
for $\mu^\kappa$-generic initial $u = u_0  \in \Hbf, g = g_0^\kappa \in L^2$. 
On the other hand, the mild formulation for \eqref{eq:scalar-intro} reads as
\begin{equation}\label{eq:mild-form}
	g_t^\kappa = S^\kappa_{t}(\omega,u)g + \int_0^tS_{t,s}^\kappa(\omega,u)b\,\dee\beta_s \, .
\end{equation}
Using the fact that
\[
	\E_\beta\left\langle S_t^{\kappa}(\omega,u)g,\int_0^t\Pi_{\leq N}S_{t,s}^\kappa(\omega,u)b\,\dee \beta_s\right\rangle_{L^2} = 0
\]
results in the identity
\begin{align}\label{eq:p=1Case}
\E \| \Pi_{\leq N} g_t^\kappa\|_{L^2}^2 = \E \|  \Pi_{\leq N} S^\kappa_t(\omega, u) g \|_{L^2}^2 
+ \E \int_0^t \|  \Pi_{\leq N} S_{t, s}^\kappa(\omega, u) b\|_{L^2}^2 \ds
\end{align}
after taking an $L^2$ norm in $\T^d$, then an expectation, and finally the It\^o isometry. 
By Theorem \ref{thm:ED}, the first such term vanishes exponentially fast in $L^2$ as $t \to \infty$,
and so it is the second term that dominates the long-time behavior of 
$\E \| g_t^\kappa\|_{L^2}^2$, hence the value of 
$\E_{\mu^\kappa}\|\Pi_{\leq N}g\|_{L^2}^2$. 
While the above is stated for $p = 1$, the case $p > 1$ is handled similarly using 
the Burkholder-Davis-Gundy inequality.

The upper bound for \eqref{eq:PSIR} in the inertial range 
is carried out in Section \ref{subsec:upper}, and the lower bound in Section \ref{subsec:lowerBoundBatch}.
The dissipative-range upper bound is done in Section \ref{subsec:upperDissipative}. 

\subsection{Upper bound in the inertial range}\label{subsec:upper}
In this section we prove the $\lesssim$ direction of \eqref{eq:PSIR}, that is,
\begin{lemma} \label{lem:PSIRUpBd}
Under the conditions of Theorem \ref{thm:Batch}, there holds for all $p \in [1,\infty)$ and for all $2\leq N <\infty$,
\begin{align}
\left(\EE_{\mu^\kappa} \norm{\Pi_{\leq N} g}_{L^2}^{2p}\right)^{1/p} & \lesssim_p \log N.  \label{ineq:PSIRUpBd}
\end{align}
\end{lemma}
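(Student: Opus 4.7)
The plan is to exploit stationarity of $\mu^\kappa$ together with the mild formulation to reduce the problem to estimating the moments of a stochastic integral, and then bound this integral using the Hilbert-valued BDG inequality followed by a semi-group splitting that trades $L^2$-contraction at short times for uniform mixing at long times. Concretely, I fix a deterministic initial condition $(u_0, g_0) = (v, 0)$ with $V(v) < \infty$ (take $v$ in a sublevel set of positive $\mu$-measure). A Krylov--Bogoliubov argument using tightness of the Cesàro time averages $\tfrac{1}{T}\int_0^T \mathrm{Law}(u_t, g_t^\kappa)\,dt$ (which will follow from the a priori bound below together with Prokhorov), combined with uniqueness of $\mu^\kappa$ from Proposition \ref{prop:uniqueStatMeasPSP}, produces weak convergence of these averages to $\mu^\kappa$. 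Applying the portmanteau theorem to the non-negative lower semicontinuous observable $\|\Pi_{\leq N}\cdot\|_{L^2}^{2p}$ yields
\[
\E_{\mu^\kappa}\|\Pi_{\leq N}g\|_{L^2}^{2p} \leq \liminf_{T\to\infty}\frac{1}{T}\int_0^T \E\|\Pi_{\leq N}g_t^\kappa\|_{L^2}^{2p}\,dt,
\]
reducing matters to proving $\sup_{t\geq 0}\E\|\Pi_{\leq N}g_t^\kappa\|_{L^2}^{2p}\lesssim_p (\log N)^p$ for this choice of initial data.

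With $g_0 = 0$, the mild formulation \eqref{eq:mild-form} collapses to $g_t^\kappa = \int_0^t S^\kappa_{t,s}(\omega,v)b\,d\beta_s$, which, after conditioning on $\mathscr{F}^W$, is an $L^2(\T^d)$-valued martingale with respect to the $\beta$ filtration with deterministic integrand. The Hilbert-valued BDG inequality then gives
\[
\E\|\Pi_{\leq N}g_t^\kappa\|_{L^2}^{2p}\lesssim_p \E_W\left(\int_0^t \|\Pi_{\leq N}S^\kappa_{t,s}(\omega,v)b\|_{L^2}^2\,ds\right)^p.
\]
To control the integrand, fix a small $\sigma > 0$ (compatible with $b \in H^\sigma$) and set $T_\ast = \tfrac{\sigma}{\gamma}\log N$, with $\gamma = \gamma(\sigma, p) > 0$ the deterministic mixing rate from Theorem \ref{thm:UniMix}. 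Changing variables $r = t - s$ and invoking the cocycle identity $S^\kappa_{t,s}(\omega,v) = S^\kappa_r(\theta_{t-r}\omega, v_{t-r})$, I bound $\|\Pi_{\leq N}S_r^\kappa b\|_{L^2}\leq \|b\|_{L^2}$ for $r \leq T_\ast$ and, for $r > T_\ast$, combine Bernstein's inequality $\|\Pi_{\leq N}f\|_{L^2}\leq N^\sigma\|f\|_{H^{-\sigma}}$ with Theorem \ref{thm:UniMix}, absorbing the factor $N^{2\sigma} = e^{2\gamma T_\ast}$ into the exponential. The resulting split reads
\[
\int_0^t\|\Pi_{\leq N}S_{t,s}^\kappa b\|_{L^2}^2\,ds \leq T_\ast\|b\|_{L^2}^2 + \|b\|_{H^\sigma}^2\int_{T_\ast}^\infty D_\kappa^2(\theta_{t-r}\omega,v_{t-r})\, e^{-2\gamma(r-T_\ast)}\,dr.
\]

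Taking the $p$-th power, applying Jensen's inequality with respect to the probability measure $2\gamma e^{-2\gamma(r-T_\ast)}\,dr$ on $[T_\ast,\infty)$ to move the power inside the $r$ integral, then taking $\E_W$, and invoking Lemma \ref{lem:RandCon} (which, by the $\kappa$-uniform $V$-bounded moment property of $D_\kappa$ from Theorem \ref{thm:UniMix}, yields $\E_W D_\kappa^{2p}(\theta_{t-r}\omega,v_{t-r}) \lesssim V(v)+1$ uniformly in $t$ and $r$) produces
\[
\sup_{t\geq 0}\E\|\Pi_{\leq N}g_t^\kappa\|_{L^2}^{2p}\lesssim_p T_\ast^p\|b\|_{L^2}^{2p} + \|b\|_{H^\sigma}^{2p}(V(v)+1) \lesssim_p (\log N)^p
\]
for $N \geq N_0$, completing the proof by Step 1.

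The principal difficulty is controlling the random constants $D_\kappa(\theta_{t-r}\omega,v_{t-r})$ uniformly in $\kappa$ and in the time shift. The $\kappa$-uniformity is the essential ingredient: any $\kappa$-dependence in the moments of $D_\kappa$ would destroy the bound as $\kappa \to 0$ across the inertial range up to $\kappa^{-1/2}$, and it is precisely this uniformity that is provided (as one of the main contributions of \cite{BBPS19II}) by Theorem \ref{thm:UniMix}. The uniformity in time-shifts relies on the super-Lyapunov drift of Lemma \ref{lem:TwistBd} (see Remark \ref{rmk:SuperL}), packaged through Lemma \ref{lem:RandCon}. Once these two ingredients are in place, the argument above is largely mechanical; the $p = 1$ case is cleaner and proceeds directly through Itô's isometry, reproducing \eqref{eq:p=1Case}, with the $p > 1$ case requiring only BDG as an additional input.
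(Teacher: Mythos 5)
Your core estimate is essentially the paper's: both proofs reduce, via the mild formulation and the Hilbert-valued BDG inequality, to bounding $\int_0^t \|\Pi_{\leq N} S^\kappa_{t,s}(\omega,u)b\|_{L^2}^2\,\ds$, and both obtain the $\log N$ by trading the $L^2$ contraction of $S^\kappa$ on a time window of length $\approx \log N$ against the uniform-in-$\kappa$ mixing of Theorem \ref{thm:UniMix} on the complement, with Lemma \ref{lem:RandCon} controlling the time-shifted random constants $D_\kappa(\theta_\tau\omega,u_\tau)$. The paper uses $\|\Pi_{\leq N}h\|_{L^2}\lesssim N\|h\|_{H^{-1}}$ and Minkowski's inequality where you use $N^\sigma\|\cdot\|_{H^{-\sigma}}$ and Jensen against the exponential weight; these are cosmetic differences. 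Where you genuinely diverge is the reduction to the stationary measure: the paper starts from $\mu^\kappa$-generic initial data, kills the initial-data term $S^\kappa_t(\omega,u)g$ using the enhanced dissipation bound of Theorem \ref{thm:ED} (accepting a transient $\kappa^{-1}V(u)e^{-\lambda t}\|g\|_{L^2}^2$), and concludes via the Birkhoff ergodic theorem; you instead launch from $(v,0)$ so the initial-data term vanishes identically, and conclude via Krylov--Bogoliubov, lower semicontinuity of $\|\Pi_{\leq N}\cdot\|_{L^2}^{2p}$, and uniqueness of $\mu^\kappa$. Your route buys independence from Theorem \ref{thm:ED} for this lemma, at the price of needing a tightness argument.

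That tightness step is the one place your justification is not adequate as written: the ``a priori bound below'' controls only $\E\|\Pi_{\leq N}g_t^\kappa\|_{L^2}^{2p}$ (hence negative Sobolev norms), which does not give precompactness of the laws in $L^2$. For fixed $\kappa>0$ this is repairable by standard means --- e.g. the It\^o energy identity gives $\frac{2\kappa}{T}\int_0^T\E\|\nabla g^\kappa_s\|_{L^2}^2\,\ds\leq \chi + T^{-1}\|g_0\|_{L^2}^2$, so the Ces\`aro-averaged laws are tight in $L^2$ by compactness of $H^1\hookrightarrow L^2$ (together with tightness of the $u_t$ marginals from Proposition \ref{prop:WPapp}) --- but you should say this explicitly rather than attribute it to the low-frequency bound. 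Two trivial slips: the upper limit of your $r$-integral should be $t$ (or $\max(t,T_\ast)$) rather than $\infty$, since $v_{t-r}$ is undefined for $r>t$; and your final bound in fact holds for all $N\geq 2$, as the lemma requires, since $\log N\gtrsim 1$ there --- no restriction to $N\geq N_0$ is needed for the upper bound.
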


We begin with the following lemma.
\begin{lemma} \label{lem:StochUpBd}
For all $p\in [1,\infty)$ and $2 \leq N < \infty$,
\begin{align}
\limsup_{t \to \infty}  \int_0^t \left( \EE\norm{\Pi_{\leq N}S_{t,\tau}^\kappa(\omega,u)b}_{L^2}^{2p}\right)^{1/p} \dee\tau \lesssim_p \log N. 
\end{align}
\end{lemma}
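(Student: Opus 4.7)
The plan is to combine two complementary pointwise bounds on $\norm{\Pi_{\leq N}S_{t,\tau}^\kappa(\omega,u)b}_{L^2}$, one sharp when $t-\tau$ is small and one sharp when $t-\tau$ is large, and to use each in its own regime. By the cocycle identity \eqref{eq:SkewS}, I first rewrite $S_{t,\tau}^\kappa(\omega,u)b = S_{t-\tau}^\kappa(\theta_\tau\omega,u_\tau)b$, which moves all of the $(t,\tau)$-dependence into the time-shifted operator acting on the deterministic source $b$.

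First, since solutions of the advection-diffusion equation \eqref{def:AD} with incompressible velocity satisfy an $L^2$ energy inequality, the operator $S_t^\kappa$ is a contraction on $L^2$ (this is also implicit in Theorem \ref{thm:ED}), giving the trivial bound $\norm{\Pi_{\leq N}S_{t-\tau}^\kappa(\theta_\tau\omega,u_\tau)b}_{L^2} \leq \norm{b}_{L^2}$. Second, the Bernstein-type inequality $\norm{\Pi_{\leq N}f}_{L^2} \leq N\norm{f}_{H^{-1}}$ together with the uniform-in-$\kappa$ mixing bound from Theorem \ref{thm:UniMix} (at $s=1$) yields
\[
\norm{\Pi_{\leq N}S_{t-\tau}^\kappa(\theta_\tau\omega,u_\tau)b}_{L^2} \leq N\, D_\kappa(\theta_\tau\omega,u_\tau)\, e^{-\gamma(t-\tau)}\,\norm{b}_{H^1}.
\]
Taking the minimum of these bounds, raising to the $2p$-th power, taking an expectation, and using that $x \mapsto x^{1/p}$ is monotone, I arrive at
\[
\bigl(\EE\,\norm{\Pi_{\leq N}S_{t,\tau}^\kappa(\omega,u)b}_{L^2}^{2p}\bigr)^{1/p} \leq \min\!\left\{\norm{b}_{L^2}^2,\; N^2 e^{-2\gamma(t-\tau)}\norm{b}_{H^1}^2\bigl(\EE\, D_\kappa^{2p}(\theta_\tau\omega,u_\tau)\bigr)^{1/p}\right\}.
\]

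The next step is to control $\EE\, D_\kappa^{2p}(\theta_\tau\omega,u_\tau)$ uniformly in both $\tau$ and $\kappa$. I invoke Lemma \ref{lem:RandCon} together with the fact that $D_\kappa$ has $V$-bounded $2p$-th moment with $\kappa$-independent implicit constant (per Theorem \ref{thm:UniMix}); combined with the super-Lyapunov drift estimate \eqref{ineq:supLDct}, this produces a constant $C$ depending on $p$ and $u$ but not on $\tau$ or $\kappa$ with $(\EE\, D_\kappa^{2p}(\theta_\tau\omega,u_\tau))^{1/p} \leq C$ for all $\tau \geq 0$. Substituting $\sigma = t-\tau$ and splitting the integral at $\sigma_0 := \gamma^{-1}\log N$ then gives
\[
\int_0^t \bigl(\EE\,\norm{\Pi_{\leq N}S_{t,\tau}^\kappa(\omega,u)b}_{L^2}^{2p}\bigr)^{1/p}\,\dee\tau \leq \sigma_0\,\norm{b}_{L^2}^2 + C\,\norm{b}_{H^1}^2\int_{\sigma_0}^{\infty}N^2 e^{-2\gamma\sigma}\,\dee\sigma \lesssim_p \log N,
\]
where the second term is $O(1)$ by the choice of $\sigma_0$. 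Since the right-hand side is independent of $t$, the $\limsup_{t\to\infty}$ is harmless.

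The only genuine subtlety is the $\kappa$-uniformity of the moment bound on $D_\kappa$: this is precisely what the mixing theorem of \cite{BBPS19II} as stated in Theorem \ref{thm:UniMix} was designed to provide. Without $\kappa$-uniform control, the $\log N$ upper bound would acquire $\kappa$-dependent prefactors that would spoil the downstream proof of Batchelor's law in Lemma \ref{lem:PSIRUpBd}.
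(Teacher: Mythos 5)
Your proof is correct and follows essentially the same route as the paper: rewrite $S_{t,\tau}^\kappa$ via the cocycle identity, interpolate between the $L^2$ contraction and the Bernstein-plus-uniform-mixing bound $N^2 e^{-2\gamma(t-\tau)}D_\kappa^2$, control the moments of $D_\kappa(\theta_\tau\omega,u_\tau)$ by Lemma \ref{lem:RandCon}, and split the time integral at $\gamma^{-1}\log N$. The only cosmetic difference is that you absorb the $e^{-\lambda\tau}V(u)$ term from Lemma \ref{lem:RandCon} into a $u$-dependent constant rather than letting it vanish in the $\limsup_{t\to\infty}$ as the paper does; since it enters only as an additive $O_u(1)$ term not multiplied by $\log N$, this is harmless for the downstream application.
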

\begin{proof}
Recall that $\norm{\Pi_{\leq N} h}_{L^2} \lesssim N\norm{ \Pi_{\leq N} h}_{H^{-1}}$ for $h \in L^2$. 
By \eqref{eq:SkewS} and Theorem \ref{thm:UniMix} followed by Lemma \ref{lem:RandCon}, 
\begin{align}
\int_0^t \left( \EE\norm{\Pi_{\leq N}S_{t,\tau}^\kappa(\omega,u)b}_{L^2}^{2p}\right)^{1/p} \dee\tau & 
= \int_0^t \left( \EE\norm{\Pi_{\leq N}S_{t-\tau}^\kappa(\theta_\tau\omega,u_\tau)b}_{L^2}^{2p}\right)^{1/p} \dee\tau \\
& \lesssim \int_0^t \left(\EE D^{2p}(\theta_\tau \omega,u_\tau)\right)^{1/p} \min(1, N^2 e^{-2\lambda(t-\tau)}) \dee\tau \\
& \lesssim \int_0^t \left(e^{-\lambda \tau}V(u) + K_\lambda \right) \min(1,N^2 e^{-2\lambda(t-\tau)}) \dee\tau \\
& \lesssim \left(e^{-\lambda t} V(u) + 1\right)\log N.
\end{align}
The proof then follows by sending $t\to \infty$. 
\end{proof}

We are now ready to complete the proof of Lemma \ref{lem:PSIRUpBd}. 
\begin{proof}[\textbf{Proof of Lemma \ref{lem:PSIRUpBd}}]
First observe that, for all $(u,g) \in \Hbf \times L^2$ there holds by \eqref{eq:mild-form}
\begin{align}
\norm{\Pi_{\leq N} g_t}_{L^2}^{2p} \lesssim_p \norm{\Pi_{\leq N} S_{t}^\kappa(\omega,u)g}_{L^2}^{2p}+ \norm{\int_0^t \Pi_{\leq N} S_{t,\tau}^\kappa(\omega,u) b\, \dee \beta_\tau}_{L^2}^{2p}.
\end{align}
From the initial data term, by Theorem \ref{thm:ED}, $\exists \lambda$ (depending on $p$) such that for a suitable $V$ as in \eqref{eq:V-def}, 
\begin{align}
\left(\EE \norm{\Pi_{\leq N} S_{t}^\kappa(\omega,u)g}_{L^2}^{2p} \right)^{1/p} \lesssim \kappa^{-1}V(u) e^{-\lambda t}\norm{g}_{L^2}^2 \, .
\end{align}
By Burkholder-Davis-Gundy [e.g. Theorem 5.2.4 \cite{DPZ96}] followed by Minkowski's inequality,
\begin{align}
\left(\EE\norm{\int_0^t \Pi_{\leq N}S_{t,\tau}^\kappa(\omega,u) b\, \dee \beta_\tau}_{L^2}^{2p}\right)^{1/p} & \lesssim_p \left(\EE\left(\int_0^t \norm{ \Pi_{\leq N} S_{t,\tau}^\kappa(\omega,u)}_{L^2}^2 \dee\tau\right)^p\right)^{1/p} \\
& \lesssim \int_0^t \left( \EE\norm{\Pi_{\leq N}S_{t,\tau}^\kappa(\omega,u)b}_{L^2}^{2p}\right)^{1/p} \dee\tau.
\end{align}
Hence, Lemma \ref{lem:StochUpBd} implies
\begin{align}
\limsup_{t \to \infty}\left(\EE\norm{\Pi_{\leq N}g_t}_{L^2}^{2p}\right)^{1/p} \lesssim \log N.
\end{align}
Note that by standard moment estimates (see e.g. \cite{KS}), for all $p,\kappa > 0$, $\EE_{\mu^\kappa} \norm{g}_{L^2}^p < \infty$.
Hence, by the Birkhoff ergodic theorem, for $\mu^\kappa$-a.e. $(u,g) \in \Hbf \times L^2$ we have
\begin{align}
\left(\EE_{\mu^\kappa} \norm{\Pi_{\leq N}g}_{L^2}^{2p}\right)^{1/p} = \lim_{T \to \infty}\frac{1}{T}\int_0^T\left(\EE_{(u,g)}\norm{\Pi_{\leq N}g_t}_{L^2}^{2p}\right)^{1/p}\dt  \lesssim \log N.
\end{align}
This completes the proof of Lemma \ref{lem:PSIRUpBd}.
\end{proof} 

\subsection{Lower bound in the inertial range}\label{subsec:lowerBoundBatch}
In this section we prove the $\gtrsim$ direction of \eqref{eq:PSIR}.

\begin{lemma} \label{lem:PSIRLowBd}
Under the conditions of Theorem \ref{thm:Batch}, there exists an $N_0 \geq 2$ chosen sufficiently large (depending on $b$ and the fluid parameters only) such that for all $\kappa \in (0,\kappa_0]$ the following holds for all $p \in [1,\infty)$ and all $N \in [N_0,\kappa^{-1/2}]$,
\begin{align}
\left(\EE_{\mu^\kappa} \norm{\Pi_{\leq N} g}_{L^2}^{2p}\right)^{1/p} & \gtrsim_{p} \log N.  \label{ineq:PSIRLowBd}
\end{align}
\end{lemma}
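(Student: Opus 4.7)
\textbf{Proof plan for Lemma \ref{lem:PSIRLowBd}.} The strategy is essentially a lower bound version of the calculation in Lemma \ref{lem:PSIRUpBd}. First, by Jensen's inequality applied to the probability measure $\mu^\kappa$,
\[
\left(\EE_{\mu^\kappa} \|\Pi_{\leq N} g\|_{L^2}^{2p}\right)^{1/p} \geq \EE_{\mu^\kappa} \|\Pi_{\leq N} g\|_{L^2}^{2},
\]
so it suffices to prove the case $p=1$. For stationary initial data $(u,g)\sim \mu^\kappa$, applying the mild formulation \eqref{eq:mild-form} and the It\^o isometry as in \eqref{eq:p=1Case} gives, for every $t\geq 0$,
\[
\EE_{\mu^\kappa}\|\Pi_{\leq N} g\|_{L^2}^2 = \EE_{\mu^\kappa}\|\Pi_{\leq N} S_t^\kappa(\omega,u) g\|_{L^2}^2 + \EE_{\mu^\kappa}\int_0^t \|\Pi_{\leq N} S_{t,s}^\kappa(\omega,u) b\|_{L^2}^2\,\ds .
\]
Dropping the (non-negative) initial data term, using the cocycle identity \eqref{eq:SkewS} with $\tau = t - s$, and observing that $(u_{t-\tau},\theta_{t-\tau}\omega)$ is distributed as $(u_0,\omega)$ with $u_0 \sim \mu$ (by stationarity of $(u_t)$), I obtain the key inequality
\[
\EE_{\mu^\kappa}\|\Pi_{\leq N} g\|_{L^2}^2 \;\geq\; \int_0^{t} \EE_{\mu}\|\Pi_{\leq N} S_\tau^\kappa(\omega,u) b\|_{L^2}^2 \,\dee \tau,
\]
valid for every $t>0$.

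The heart of the proof is to show that for a small constant $c > 0$ and every $\tau \in [0,c\log N]$,
\[
\EE_\mu \|\Pi_{\leq N} S_\tau^\kappa(\omega,u) b\|_{L^2}^2 \;\geq\; \tfrac{1}{2}\|b\|_{L^2}^2
\]
provided $N\in[N_0,\kappa^{-1/2}]$ with $N_0$ large. For any test distribution $h$, Bernstein's inequality gives
$\|\Pi_{\leq N}h\|_{L^2}^2 \geq \|h\|_{L^2}^2 - N^{-2}\|h\|_{H^1}^2$, so I need two ingredients: (a) an almost-conservation estimate $\|S_\tau^\kappa b\|_{L^2}^2 \geq \|b\|_{L^2}^2 - E_1(\tau,\omega,u)$, and (b) an upper bound $\|S_\tau^\kappa b\|_{H^1}^2 \leq E_2(\tau,\omega,u)\|b\|_{H^1}^2$. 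Both follow from the Lipschitz regularity of $u_t$: an energy estimate on \eqref{def:AD} yields
$\frac{d}{ds}\|S_s^\kappa b\|_{L^2}^2 = -2\kappa\|\nabla S_s^\kappa b\|_{L^2}^2$, while a standard $H^1$ computation yields
$\|\nabla S_s^\kappa b\|_{L^2} \leq \exp\bigl(C\int_0^s \|\nabla u_r\|_{L^\infty}\,\dee r\bigr)\|b\|_{H^1}$.
Combining and taking $\mu$-expectation,
\[
\EE_\mu \|\Pi_{\leq N} S_\tau^\kappa b\|_{L^2}^2 \;\geq\; \|b\|_{L^2}^2 - \bigl(2\kappa\tau + N^{-2}\bigr)\|b\|_{H^1}^2 \,\EE_\mu \exp\!\Bigl(2C\!\int_0^\tau \|\nabla u_r\|_{L^\infty}\,\dee r\Bigr).
\]

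The exponential moment is the place where the hypothesis ``Lipschitz velocity fields cannot mix faster than exponential'' is invoked: by Lemma \ref{lem:twist-cantelli} (applied to the process started at $u$) there exist deterministic constants $C_0,C_1 > 0$ and a random constant $D_0(\omega,u)$ with $V$-bounded first moment such that $\exp(2C\int_0^\tau\|\nabla u_r\|_{L^\infty}\dee r) \leq D_0(\omega,u)\,e^{C_0\tau}$. Integrating over $\mu$, using $\EE_\mu V(u) < \infty$ (from the exponential moments of $\mu$, valid under Assumption \ref{ass:strong}), gives $\EE_\mu \exp(2C\int_0^\tau\|\nabla u_r\|_{L^\infty}\dee r) \leq C_1 e^{C_0\tau}$. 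Choosing $c>0$ so small that $C_0 c \leq \tfrac12$ makes this bound $\leq C_1 N^{1/2}$ for $\tau \leq c\log N$; then for $N\leq \kappa^{-1/2}$ both error terms $(2\kappa\tau)\cdot C_1 N^{1/2}\|b\|_{H^1}^2$ and $N^{-2}\cdot C_1 N^{1/2}\|b\|_{H^1}^2 = C_1 N^{-3/2}\|b\|_{H^1}^2$ are $\leq \tfrac14\|b\|_{L^2}^2$ once $N\geq N_0$ and $\kappa\leq \kappa_0$ are chosen appropriately (with $N_0$ depending only on $\|b\|_{H^1}/\|b\|_{L^2}$ and the constants above).

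Inserting this pointwise-in-$\tau$ estimate back into the earlier integral inequality (taking $t$ any value at least $c\log N$) yields
\[
\EE_{\mu^\kappa}\|\Pi_{\leq N} g\|_{L^2}^2 \;\geq\; \int_0^{c\log N} \tfrac12 \|b\|_{L^2}^2 \,\dee \tau \;=\; \tfrac{c}{2}\|b\|_{L^2}^2\,\log N,
\]
finishing the case $p=1$ and, via Jensen, the lemma. The main technical obstacle is not conceptual but bookkeeping: ensuring that the time window $[0,c\log N]$ can simultaneously tame the $\kappa\tau$-loss (using $N\leq\kappa^{-1/2}$), the $N^{-2}\|\cdot\|_{H^1}^2$-loss, and the exponential moment of $\int\|\nabla u\|_{L^\infty}$ from Lemma \ref{lem:twist-cantelli}; this is what forces $N_0$ to depend on $b$ and the Navier--Stokes parameters (but crucially not on $\kappa$).
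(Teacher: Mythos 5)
Your proof is correct, and the skeleton (Jensen to reduce to $p=1$, the It\^o isometry identity \eqref{eq:p=1Case}, dropping the initial-data term, restricting to a time window of length $c\log N$, and splitting $\norm{\Pi_{\leq N}h}_{L^2}^2 \geq \norm{h}_{L^2}^2 - N^{-2}\norm{h}_{H^1}^2$ with the high-frequency error killed by the $H^1$ growth bound of Lemmas \ref{lem:twist-cantelli}--\ref{lem:H1grwth}) matches the paper's proof of Lemma \ref{lem:LowBdMain}. The genuine divergence is in how the full $L^2$ norm $\norm{S^\kappa_\tau(\omega,u)b}_{L^2}^2$ is bounded below on the window: the paper invokes Theorem \ref{thm:optTimeScale} (the optimal dissipation-time lower bound $\tau_\ast \geq \delta\abs{\log\kappa}$ imported from \cite{BBPS19II}) together with a Chebyshev argument on the inverse moments of the random constant $\delta$, whereas you inline an elementary argument: the exact balance $\tfrac{\dee}{\ds}\norm{S^\kappa_s b}_{L^2}^2 = -2\kappa\norm{\grad S^\kappa_s b}_{L^2}^2$ combined with the same $H^1$ growth bound shows the $L^2$ norm can lose at most $2\kappa\tau\, e^{2C\int_0^\tau\norm{\grad u_r}_{L^\infty}\dee r}\norm{b}_{H^1}^2$, which is negligible for $\tau \lesssim \log N$ and $\kappa \leq N^{-2}$ after taking the exponential moment. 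This is essentially a self-contained re-derivation of the relevant special case of Theorem \ref{thm:optTimeScale}, and it only needs a first moment of the exponential rather than the inverse moments \eqref{ineq:deltap}; what you lose is nothing for this lemma, though the paper's route reuses a theorem it needs anyway. A second, minor stylistic difference: you exploit stationarity of $\mu^\kappa$ (and the fact that its $\Hbf$-marginal is $\mu$, so $(\theta_s\omega, u_s)$ is distributed as a fresh pair) to work at a single fixed time, while the paper runs the trajectory from fixed initial data, takes $\liminf_{t\to\infty}$ using Lemma \ref{lem:RandCon} to damp the $V(u)$-dependence, and then applies the Birkhoff ergodic theorem; both are valid, and your bookkeeping of how $N_0$ depends on $\norm{b}_{H^1}/\norm{b}_{L^2}$ and the fluid constants but not on $\kappa$ is sound.
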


We begin with the following, which follows from an $H^1$ energy estimate and Lemma \ref{lem:twist-cantelli}.
\begin{lemma} \label{lem:H1grwth}
There holds (with $C_0$ and $D_0$ as in Lemma \ref{lem:twist-cantelli}) 
\begin{align}
\norm{S_{t}^\kappa(\omega,u)g}_{H^1}  \leq D_0(\omega,u) e^{C_0 t}\norm{g}_{H^1}.
\end{align}
\end{lemma}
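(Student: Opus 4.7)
The plan is to run a standard $H^1$ energy estimate on $\bar{g}_t := S^\kappa_t(\omega,u) g$, which solves the advection-diffusion equation \eqref{def:AD} without source. First, testing the equation against $\bar{g}_t$, using incompressibility of $u_t$ to kill the transport term, and absorbing the diffusion as a nonpositive contribution yields the basic contraction bound $\|\bar g_t\|_{L^2} \leq \|g\|_{L^2}$, so the $L^2$ part of the $H^1$ norm is automatic.

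Next I would estimate $\|\nabla \bar g_t\|_{L^2}$. Applying $\nabla$ to \eqref{def:AD} gives
\[
\partial_t \nabla \bar g_t + u_t \cdot \nabla (\nabla \bar g_t) + (\nabla u_t)^\top \nabla \bar g_t = \kappa \Delta \nabla \bar g_t.
\]
Testing against $\nabla \bar g_t$, integrating by parts, and using $\Div u_t = 0$ to eliminate the transport term produces
\[
\frac{1}{2}\frac{d}{dt}\|\nabla \bar g_t\|_{L^2}^2 = -\int_{\T^d} (\nabla u_t)^\top \nabla \bar g_t \cdot \nabla \bar g_t \, \dx - \kappa \|\Delta \bar g_t\|_{L^2}^2 \leq \|\nabla u_t\|_{L^\infty}\|\nabla \bar g_t\|_{L^2}^2,
\]
where the diffusion term is dropped as it has a favorable sign. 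Gronwall's inequality then gives
\[
\|\nabla \bar g_t\|_{L^2} \leq \exp\left(\int_0^t \|\nabla u_\tau\|_{L^\infty} \, d\tau\right) \|\nabla g\|_{L^2}.
\]

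Finally, combining the $L^2$ contraction with the $\nabla$ bound yields an analogous bound for $\|\bar g_t\|_{H^1}$, and invoking Lemma \ref{lem:twist-cantelli} to control the exponential factor by $D_0(\omega,u) e^{C_0 t}$ completes the proof. There is no substantive obstacle here: the estimate is a routine parabolic energy identity, and all the probabilistic content is already encapsulated in Lemma \ref{lem:twist-cantelli}. The only thing to notice is that the estimate is uniform in $\kappa \geq 0$ precisely because diffusion only helps.
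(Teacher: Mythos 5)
Your proof is correct and is exactly the argument the paper intends: the paper states only that the lemma ``follows from an $H^1$ energy estimate and Lemma \ref{lem:twist-cantelli},'' and your Gr\"onwall bound $\|\nabla \bar g_t\|_{L^2} \leq \exp\bigl(\int_0^t \|\nabla u_\tau\|_{L^\infty}\,d\tau\bigr)\|\nabla g\|_{L^2}$ combined with the $L^2$ contraction and Lemma \ref{lem:twist-cantelli} is precisely that estimate. No discrepancies to report.
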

    
The next lemma provides the lower bound on the stochastic integral contribution.  
\begin{lemma} \label{lem:LowBdMain}
There exists an $N_0 \geq 2$ chosen sufficiently large (depending on $b$ and the fluid parameters only) such that for all $\kappa \in (0,\kappa_0]$ the following limit holds for all $N \in [N_0,\kappa^{-1/2}]$,
\begin{align}
\liminf_{t \to \infty} \int_0^t \EE \norm{\Pi_{\leq N} S_{t-\tau}^\kappa(\omega,u)b}_{L^2}^2 \dee\tau \gtrsim \log N. 
\end{align}
\end{lemma}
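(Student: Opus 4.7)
The plan is to exploit the Lipschitz regularity of $u_s$ in space to bound how fast $L^2$ mass escapes from frequencies $|k| \leq N$. After changing variables $s = t - \tau$, the lemma reduces to showing
\[
\liminf_{t \to \infty} \int_0^t \EE \|\Pi_{\leq N} S_s^\kappa(\omega, u) b\|_{L^2}^2 \, ds \gtrsim \log N
\]
for $N \in [N_0, \kappa^{-1/2}]$, where $\bar g_s := S_s^\kappa(\omega, u) b$ solves the source-free advection-diffusion equation with initial data $b$. Since $\|\nabla \bar g_s\|_{L^2}$ is expected to grow at most at rate $e^{C_0 s}$, most of the $L^2$ mass of $\bar g_s$ should remain at frequencies $|k| \leq N$ for a time window of order $\log N$.

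To make this precise, I would combine three ingredients: (i) the $L^2$ energy identity $\|\bar g_s\|_{L^2}^2 = \|b\|_{L^2}^2 - 2\kappa \int_0^s \|\nabla \bar g_\sigma\|_{L^2}^2 \, d\sigma$; (ii) the $H^1$ growth bound $\|\nabla \bar g_s\|_{L^2} \leq D_0(\omega, u) e^{C_0 s} \|b\|_{H^1}$ obtained by an $H^1$ energy estimate combined with Lemma \ref{lem:twist-cantelli} (essentially Lemma \ref{lem:H1grwth}); and (iii) the Fourier-cutoff estimate $\|(I - \Pi_{\leq N}) \bar g_s\|_{L^2} \leq N^{-1} \|\nabla \bar g_s\|_{L^2}$. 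Together these yield the $\omega$-pointwise lower bound
\[
\|\Pi_{\leq N} \bar g_s\|_{L^2}^2 \geq \|b\|_{L^2}^2 - C (\kappa + N^{-2}) D_0^2 \|b\|_{H^1}^2 e^{2 C_0 s}.
\]
For $N \leq \kappa^{-1/2}$ one has $\kappa \leq N^{-2}$, so the loss term is controlled by $C N^{-2} D_0^2 \|b\|_{H^1}^2 e^{2C_0 s}$, and consequently $\|\Pi_{\leq N} \bar g_s\|_{L^2}^2 \geq \tfrac{1}{2} \|b\|_{L^2}^2$ whenever $s \leq \tau_*(\omega, u, N) := (2C_0)^{-1} \log(c_b N^2/D_0^2)$, with $c_b > 0$ depending only on $b$.

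The next step will be to integrate this $\omega$-pointwise bound over $[0, \min(t, (\tau_*)_+)]$, take expectation, and pass $t \to \infty$ by monotone convergence; this will reduce matters to showing $\EE (\tau_*)_+ \gtrsim \log N$. Writing $\tau_* = C_0^{-1}(\log N - \log D_0 + O(1))$, I would then invoke Chebyshev's inequality and the $V$-bounded moment estimate $\EE D_0^p \lesssim_p V(u)$ from Lemma \ref{lem:twist-cantelli} to conclude $\P(D_0 > N^{1/4}) \lesssim_p V(u) N^{-p/4}$, while on the complementary event $\tau_* \geq (3 \log N)/(4 C_0) - O(1)$. Taking $p$ large and $N_0$ sufficiently large will then force the exceptional event to have probability below $1/2$, yielding the required lower bound.

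The hard part will be handling the random constant $D_0$: it can be large on events of small but positive probability, which would otherwise curtail the window $\tau_*$ on which the lower bound holds. The moment bound in Lemma \ref{lem:twist-cantelli} is precisely what is needed to absorb this bad event. A secondary point is that $N_0$ may depend on $V(u)$ through the moment constant, but this should be acceptable in the subsequent application to Lemma \ref{lem:PSIRLowBd}, where $u$ is ultimately drawn from $\mu^\kappa$ and thus has finite $V$-moments, allowing restriction to a $\mu^\kappa$-positive-measure set on which $V(u)$ is uniformly bounded.
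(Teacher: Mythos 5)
Your argument is correct, and it takes a genuinely different route from the paper at the key step. Both proofs rest on the same decomposition $\norm{\Pi_{\leq N}\bar g_s}_{L^2}^2 \geq \norm{\bar g_s}_{L^2}^2 - \norm{(I-\Pi_{\leq N})\bar g_s}_{L^2}^2$, and both control the high-frequency remainder by combining $\norm{(I-\Pi_{\leq N})\bar g_s}_{L^2}\leq N^{-1}\norm{\grad \bar g_s}_{L^2}$ with the $H^1$ growth bound of Lemma \ref{lem:H1grwth}. The difference is in the lower bound on the full $L^2$ norm: the paper imports Theorem \ref{thm:optTimeScale} as a black box (the half-life of $\norm{\bar g_s}_{L^2}$ is at least $\delta\abs{\log\kappa}$ with $\EE\,\delta^{-1}\lesssim V(u)$) and runs Chebyshev on $\delta^{-1}$ over the deterministic window $[t-\gamma\log N,\,t]$, whereas you rederive exactly the special case needed from scratch via the energy identity $\norm{\bar g_s}_{L^2}^2=\norm{b}_{L^2}^2-2\kappa\int_0^s\norm{\grad\bar g_\sigma}_{L^2}^2\,\dee\sigma$ together with the same $H^1$ growth bound, which produces a random window $[0,\tau_*(\omega)]$ whose expected length is $\gtrsim\log N$ after a Chebyshev bound on $D_0$. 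Your version is more elementary and self-contained; the paper's buys a cleaner separation of concerns (the delicate $\kappa$-uniform half-life estimate is quarantined in a cited theorem) and, more importantly, a constant $N_0$ that is independent of the initial velocity $u$.

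Two caveats. First, the operator in the statement is really $S^\kappa_{t,\tau}(\omega,u)=S^\kappa_{t-\tau}(\theta_\tau\omega,u_\tau)$ (this is what is actually fed into the proof of Lemma \ref{lem:PSIRLowBd}); your change of variables treats the cocycle started at time $0$ from the fixed state $u$, which is not equal in law to the shifted operator. Second, and relatedly, your $N_0$ acquires a dependence on $u$ through $V(u)$ via $\PP(D_0>N^{1/4})\lesssim_p V(u)N^{-p/4}$, whereas the lemma asserts $N_0$ depends only on $b$ and the fluid parameters. The paper removes this dependence by placing the time window at the end of $[0,t]$ and invoking Lemma \ref{lem:RandCon}, under which $\EE\, D_0^{p}(\theta_r\omega,u_r)\lesssim e^{-\lambda r}V(u)+K_\lambda$, so the $V(u)$ contribution disappears in the $t\to\infty$ limit. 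Your energy-identity argument adapts verbatim to that setting, so this is a repair rather than a gap, but as written your conclusion is slightly weaker than the stated lemma and would need this adjustment before being used downstream.
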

\begin{proof}
Fix $u\in \Hbf$ and let $\gamma \in (0,1)$  to be chosen later. Write for $t \geq \gamma \log N$ and $N \geq 2$, 
\begin{align}
\int_0^t \|\Pi_{\leq N} S_{t-r}^\kappa(\theta_r\omega,u_r)b\|_{L^2}^2 \dr & \geq \int_{t-\gamma \log{N}}^t \|S_{t-r}^\kappa(\theta_r\omega,u_r)b\|_{L^2}^2 \dr \nonumber \\ & \quad - \int_{t-\gamma \log{N}}^t \|\Pi_{>N}S_{t-r}^\kappa(\theta_r\omega,u_r)b\|_{L^2}^2 \dr. \label{ineq:SimSplit}
\end{align}
The first term is controlled by the $H^1$ norm growth bound in Lemma \ref{lem:H1grwth} 
\begin{align}
\E \int_{t-\gamma\log N}^t \|\Pi_{> N} S_{t-r}^\kappa(\theta_r\omega,u_r)b\|_{L^2}^2 \dr \leqc \int_{t-\gamma \log N}^t N^{-2}\E D_0^2(\theta_r\omega,u_r)e^{\lambda (t-r)}\dr.
\end{align}
By Lemma \ref{lem:RandCon}
\begin{align}
\E D_0^2(\theta_r \omega, u_r)  \lesssim  e^{-\lambda r}V(u) + K_\lambda,
\end{align}
and hence, choosing $\gamma < \lambda^{-1}$, we have
\begin{align}
\limsup_{t\to \infty}\E \int_{t-\gamma \log N}^t \|\Pi_{> N} S_{t-r}^\kappa(\theta_r\omega,u_r)b\|_{L^2}^2 \dr
& \leqc \gamma N^{\gamma\lambda -2} \log N \limsup_{t\to \infty}(e^{-\lambda t}V(u) + K_\lambda)\\
& \leq C_0 N^{-1}\gamma \log N, \label{ineq:SimSplit2}
\end{align}
for some constant $C_0$ universal independent of $\gamma, \kappa, t, u$. 
This completes the second term in \eqref{ineq:SimSplit}.
For the first term in \eqref{ineq:SimSplit}, we use the stopping time defined in Theorem \ref{thm:optTimeScale} with initial data $b$: 
\[
\E \|S_{t-r}^\kappa(\theta_r\omega,u_r)b\|_{L^2}^2 \geq \frac{1}{2}\|b\|_{L^2}^2 \P\left(\tau_\ast(\theta_{r}\omega,u_r) > t-r\right) \, , 
\]
where (analogously to Theorem \ref{thm:optTimeScale}) $\tau_\ast(\omega,u)$ is defined by
\[
	\tau_\ast(\omega,u) = \inf\left\{t>0 \,:\, \|S_{t}^\kappa(\omega,u)b\|_{L^2}^2\leq \frac{\norm{b}_{L^2}^2}{2}\right\}.
\]
Theorem \ref{thm:optTimeScale} then implies that $\exists$ $C_1 > 0$ (independent of $t,r,\kappa,u$) and $\delta(\omega,u) : \Omega \times \Hbf \to \R_{> 0}$ with $\E\delta^{-1}(\cdot,u) \leq C_1 V(u)$ such that $\tau_\ast \geq \delta |\log{\kappa}|$. 
Using Chebyshev's inequality and an argument similar to that used to prove Lemma \ref{lem:RandCon}
\begin{align}
\P\left(\tau_\ast(\theta_{r}\omega,u_r) > t-r\right)  & \geq  1 - \PP\left( \delta^{-1}(\theta_r \omega, u_r) > |\log\kappa|/(t-r)\right) \\
& \geq 1 - \frac{t-r}{\abs{\log \kappa}} \EE \left( \EE ( \delta^{-1}(\theta_r\omega,u_r) | \mathscr{F}_r)  \right) \\
& \geq 1 - C_1\frac{t-r}{|\log{\kappa}|}(e^{-\lambda r}V(u) + K_\lambda). 
\end{align}
It follows that for $2 \leq N \leq \kappa^{-1/2}$,
\[
\begin{aligned}
  & \liminf_{t\to \infty}\E \int_{t-\gamma \log N}^t \|S_{t-r}^\kappa(\theta_r\omega,u_r) b\|_{L^2}^2 \dr\\
&\hspace{1in}\geq \liminf_{t \to \infty} \frac{1}{2}\|b\|_{L^2}^2\gamma\log N  \left(1 - \frac{C_1 \gamma}{2} \frac{\log N}{|\log{\kappa}|}(e^{-\lambda t} V(u) + K_\lambda)\right)\\
&\hspace{1in}\geq \frac{1}{2}\|b\|_{L^2}^2\gamma \log N \left(1 - \frac{C_1 K_\lambda \gamma}{4} \right). 
\end{aligned}
\]
Therefore, for $K_\lambda C_1 \gamma < 1/8$ and $N > N_0 := 8C_0$.
\begin{align}
\liminf_{t\to \infty}\EE \int_0^t \|\Pi_{\leq N} S_{t-r}^\kappa(\theta_r\omega,u_r)b\|_{L^2}^2 \dr & \geqc (1- \frac{\gamma}{4} C_1K_{\lambda} - 2 C_0 N^{-1})\log N \\
& \geqc \log N . 
\end{align}
\end{proof}

\begin{proof}[\textbf{Proof of Lemma \ref{lem:PSIRLowBd}:}]
By Jensen's inequality, $\EE_{\mu^\kappa} \norm{\Pi_{\leq N} g}_{L^2}^{2p} \geq ( \EE_{\mu^\kappa} \norm{\Pi_{\leq N} g}^2_{L^2} )^p$ for $p \geq 1$, and so in the following argument it suffices to consider the $p = 1$ case. 
For arbitrary initial data  $g \in L^2$, we have by \eqref{eq:p=1Case} that
\begin{align}
\E \norm{ \Pi_{\leq N} g_t}_{L^2}^2 \geq \EE \int_0^t \|\Pi_{\leq N} S_{t-r}^\kappa(\theta_r\omega,u_r)b\|_{L^2}^2 \dr \, , 
\end{align}
hence $\liminf_{t \to \infty} \E \norm{ \Pi_{\leq N} g_t}_{L^2}^2 \gtrsim \log N$ by Lemma 
\ref{lem:LowBdMain} for $N_0  < N < \kappa^{-1/2}$.
By the Birkhoff ergodic theorem (as in Lemma \ref{lem:PSIRUpBd} above), 
\begin{align}
\EE_{\mu^\kappa}\norm{\Pi_{\leq N} g_t}_{L^2}^{2} & =  \lim_{T \to \infty}\frac{1}{T}\int_0^T \EE \norm{\Pi_{\leq N} g_t}_{L^2}^{2} \dt \gtrsim \log N \, . 
\end{align}
This completes the proof of Lemma \ref{lem:PSIRLowBd}.
\end{proof}

\subsection{Upper bound in the dissipative range}\label{subsec:upperDissipative}

First, we derive the upper bound on the $L^2$ norm stated in \eqref{ineq:preg}. 
If we were not also interested in higher moments we could use the following simple observation using the $L^2$ balance \eqref{eq:L2Balance} and the upper bound in Batchelor's law \eqref{ineq:PSIRUpBd}
\begin{align}
\EE_{\mu^\kappa}\norm{g}_{L^2}^2 & \lesssim \EE_{\mu^\kappa}\norm{\Pi_{\leq \kappa^{-1/2}}g}_{L^2}^2 + \kappa \EE_{\mu^\kappa} \norm{\Pi_{> \kappa^{-1/2}}\grad g}_{L^2}^2 \lesssim 1 + \abs{\log \kappa}. 
\end{align}
The following argument provides also estimates on moments $p > 2$.
\begin{lemma} \label{lem:L2upbd}
Under the conditions of Theorem \ref{thm:Batch}, there holds for all $p \in [1,\infty)$, for all $\kappa \in (0,\kappa_0]$
\begin{align}
\left(\EE_{\mu^\kappa} \norm{\Pi_{\leq N} g}_{L^2}^{2p}\right)^{1/p} & \lesssim_p \abs{\log \kappa}.  \label{ineq:L2upBd}
\end{align}
\end{lemma}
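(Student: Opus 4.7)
The strategy is to mirror the argument from Lemma \ref{lem:PSIRUpBd} but omit the projection $\Pi_{\leq N}$, replacing the $\Pi_{\leq N}$-induced frequency truncation (which produced the $\log N$) with the enhanced dissipation from Theorem \ref{thm:ED} (which will produce the $|\log \kappa|$). By Proposition \ref{prop:uniqueStatMeasPSP} and Birkhoff's ergodic theorem, it suffices to show that for $\mu^\kappa$-a.e.\ initial $(u,g)\in \Hbf\times L^2$,
\begin{align}
\limsup_{t\to\infty}\bigl(\EE_{(u,g)}\|g_t^\kappa\|_{L^2}^{2p}\bigr)^{1/p} \lesssim_p |\log\kappa|,
\end{align}
where the implicit constant is independent of $\kappa$. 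Starting from the mild formulation \eqref{eq:mild-form}, the Burkholder-Davis-Gundy inequality, and Minkowski's inequality (exactly as in the proof of Lemma \ref{lem:PSIRUpBd}), we reduce to bounding
\begin{align}
\bigl(\EE\|S_t^\kappa(\omega,u)g\|_{L^2}^{2p}\bigr)^{1/p} + \int_0^t \bigl(\EE\|S_{t-\tau}^\kappa(\theta_\tau\omega,u_\tau)b\|_{L^2}^{2p}\bigr)^{1/p}\,d\tau,
\end{align}
the first term of which vanishes as $t\to\infty$ via Theorem \ref{thm:ED} and Lemma \ref{lem:RandCon}.

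For the integrand, Theorem \ref{thm:ED} gives the pointwise bound
$\|S_{t-\tau}^\kappa(\theta_\tau\omega,u_\tau)b\|_{L^2}^{2p} \leq \min\{1, D_\kappa'(\theta_\tau\omega,u_\tau)^{2p}\kappa^{-2p}e^{-2p\gamma(t-\tau)}\}\|b\|_{L^2}^{2p}$. Using $\EE\min\{1,X\}\leq \min\{1,\EE X\}$ together with the identity $\min\{1,A\}^{1/p}=\min\{1,A^{1/p}\}$, and applying Lemma \ref{lem:RandCon} to the $V$-bounded random constant $D_\kappa'$ (with some $\lambda>0$ chosen below), one gets
\begin{align}
\bigl(\EE\|S_{t-\tau}^\kappa b\|_{L^2}^{2p}\bigr)^{1/p} \lesssim_p \min\!\Bigl\{1,\;(e^{-\lambda\tau}V(u)+K_\lambda)^{1/p}\kappa^{-2}e^{-2\gamma(t-\tau)}\Bigr\}\|b\|_{L^2}^{2}.
\end{align}

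The time integral is then split at $T_*:=\gamma^{-1}|\log\kappa|$. On the short-time window $\tau\in[t-T_*,t]$, bound the integrand trivially by $1$ to obtain a contribution of order $T_*\lesssim|\log\kappa|$. On the long-time window $\tau\in[0,t-T_*]$, use the second argument of the $\min$: the $K_\lambda$-piece integrates to $O(\kappa^{-2}e^{-2\gamma T_*})=O(1)$ by construction of $T_*$, while the $V(u)$-piece contributes $O(V(u)^{1/p}\kappa^{-2}e^{-2\gamma t})$ provided $\lambda>2p\gamma$, and hence vanishes as $t\to\infty$. Summing the two windows gives the desired $|\log\kappa|$ bound, and passing through Birkhoff as in Lemma \ref{lem:PSIRUpBd} completes the proof.

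The main technical obstacle is the careful calibration of $\lambda$ in Lemma \ref{lem:RandCon}: it must be large enough relative to $p\gamma$ to ensure that the growth factor $e^{2\gamma\tau}$ from the enhanced dissipation bound does not overwhelm the decay $e^{-\lambda\tau/p}$ inherited from the $V$-moment bound on $D_\kappa'$. The split at $T_*\sim\gamma^{-1}|\log\kappa|$ is precisely the scale that balances the two regimes and exhibits the logarithmic divergence as a short-time phenomenon: below $T_*$ the semigroup has not yet dissipated the scalar mass, while above $T_*$ the enhanced dissipation factor $\kappa^{-1}e^{-\gamma t}$ is already $\leq 1$. This mechanism echoes the bound \eqref{eq:L2-diverge} and is consistent with the heuristic that $\EE_{\mu^\kappa}\|g\|_{L^2}^2 \approx |\log\kappa|$ arises from the mass stored below the dissipative wavenumber $\kappa^{-1/2}$.
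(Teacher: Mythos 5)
Your proposal is correct and follows essentially the same route as the paper: mild formulation plus Burkholder--Davis--Gundy and Minkowski, the enhanced dissipation bound of Theorem \ref{thm:ED} in place of the mixing bound, Lemma \ref{lem:RandCon} to control $D_\kappa'(\theta_\tau\omega,u_\tau)$, and the observation that $\int_0^t \min\bigl(1,\kappa^{-2}e^{-2\gamma(t-\tau)}\bigr)\,\dee\tau \lesssim |\log\kappa|$, followed by Birkhoff. Your explicit split at $T_*=\gamma^{-1}|\log\kappa|$ and the calibration $\lambda>2p\gamma$ just make transparent what the paper's one-line integral estimate does implicitly.
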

\begin{proof}
  The proof proceeds similar to Lemma \ref{lem:PSIRLowBd} with the enhanced dissipation estimate (Theorem \ref{thm:ED}) in place of the uniform mixing estimate (Theorem \ref{thm:UniMix}). As in Lemma \ref{lem:PSIRLowBd}, by Burkholder-Davis-Gundy and Minkowski's inequality, 
\begin{align}
\left(\EE \norm{g_t}_{L^2}^{2p}\right)^{1/p} \lesssim \left(\EE \norm{S_t^\kappa(\omega,u) g}_{L^2}^{2p}\right)^{1/p} + \int_0^t \left( \EE \norm{S_{t,\tau}^\kappa(\omega,u) b}_{L^2}^{2p} \right)^{1/p} \dee \tau.  
\end{align}
By Theorem \ref{thm:ED}, $\exists \lambda >0$ and $D^\prime_\kappa(\omega,u)$ with $V$-bounded $2p$-th moments we have for any $(u,g) \in \Hbf \times L^2$,
\begin{align}
\left(\EE \norm{g_t}_{L^2}^{2p}\right)^{1/p} & \lesssim \left(\EE \norm{S_t^\kappa(\omega,u) g}_{L^2}^{2p}\right)^{1/p} + \int_0^t \left( \EE \norm{S_{t,\tau}^\kappa(\omega,u) b}_{L^2}^{2p} \right)^{1/p} \dee \tau \\ 
& \lesssim \left(\EE (D_\kappa'(\omega,u))^{2p}\right)^{1/p} \kappa^{-1} e^{-\lambda t} \norm{g}_{L^2}^{2} \\
& \quad + \int_0^t \left(\EE (D_\kappa'(\theta_{\tau} \omega,u_\tau))^{2p}\right)^{1/p} \min(1, \kappa^{-1}e^{-\lambda(t-\tau)}) \dee\tau. 
\end{align}
By  Lemma \ref{lem:RandCon}, we therefore have 
\begin{align}
\left(\EE \norm{g_t}_{L^2}^{2p}\right)^{1/p} & \lesssim_p  V(u) \kappa^{-1} e^{-\lambda t} \norm{g}_{L^2}^2 + \int_0^t (e^{-\lambda \tau}V(u) + 1)\min(1, \kappa^{-1}e^{-\lambda(t-\tau)}) \dee\tau \\
& \lesssim_p V(u) \kappa^{-1} e^{-\lambda t}(1 + \norm{g}_{L^2}^2) + \abs{\log \kappa}. 
\end{align}
Hence for all $(u,g) \in \Hbf \times L^2$,
\begin{align}
\limsup_{t \to \infty} \EE\norm{g_t}^{2p}_{L^2} \lesssim \abs{\log \kappa}.
\end{align}
Finally, by the Birkhoff ergodic theorem, for $\mu^\kappa$ a.e. $(u,g) \in \Hbf \times L^2$
\begin{align}
\EE_{\mu^\kappa} \norm{g}_{L^2}^{2p} = \lim_{T\to \infty} \frac{1}{T}\int_0^T \EE_{(u,g)}\norm{g_t}_{L^2}^{2p} \dt \lesssim \abs{\log\kappa}^p. 
\end{align}
\end{proof}

Finally we turn to the proof of \eqref{ineq:preg} for $q > 0$, which is a relatively straightforward consequence of parabolic regularity and the $L^2$ a priori estimate in Lemma \ref{lem:L2upbd}. 
First we prove the following quantitative $H^s$ regularization estimate.
Below we use the Fourier multiplier notation: for $m:\C^d \to \R$ measurable, we define the operator
\begin{align}
m(\grad)f = \left( m(i k) \hat{f}(k) \right)^{\vee}. 
\end{align}
\begin{lemma} \label{lem:parabolic}
For all initial data $(u,g) \in \Hbf \times L^2$, for all $ \gamma \in (0, \alpha - d - 1)$, $\exists \beta \geq 0$, such that $\forall p \in [1,\infty)$ and $\forall \eta>0$ there holds for $V = V_{\beta,\eta}$
\begin{align}
\EE\norm{\left(1 + \sqrt{\kappa} \brak{\grad}\right)^\gamma S^\kappa_{1}(\omega,u) g}_{L^2}^p \lesssim_{p,\gamma} V^p(u) \norm{g}_{L^2}^p. 
\end{align}
\end{lemma}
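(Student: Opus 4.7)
The plan is to carry out a nonlinear energy estimate for $\Phi(t) := \|(-\Delta)^{\gamma/2} g_t\|_{L^2}^2$, exploiting a superlinear-dissipation mechanism coming from parabolic smoothing to handle the fact that the initial data $g \in L^2$ need not lie in $H^\gamma$, so that $\Phi(0)$ may be infinite. Once $\Phi(1)$ is controlled, the conclusion will follow because
\[
\|(1+\sqrt{\kappa}\brak{\grad})^\gamma g_1\|_{L^2}^2 \lesssim \|g_1\|_{L^2}^2 + \kappa^\gamma \|(-\Delta)^{\gamma/2}g_1\|_{L^2}^2,
\]
and the basic $L^2$ energy bound gives $\|g_1\|_{L^2} \leq \|g\|_{L^2}$.

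First I would write the energy identity. Using $\langle (-\Delta)^{\gamma/2} g,\, u\cdot\grad (-\Delta)^{\gamma/2} g\rangle = 0$ (incompressibility), one has
\[
\partial_t \Phi + 2\kappa \|(-\Delta)^{(\gamma+1)/2}g_t\|_{L^2}^2 = -2\langle (-\Delta)^{\gamma/2}g_t,\,[(-\Delta)^{\gamma/2}, u_t\cdot\grad]g_t\rangle.
\]
A Kato-Ponce commutator estimate, combined with a Young's inequality absorbing the highest-derivative piece into the dissipation, bounds the right-hand side by
\[
C \|u_t\|_{H^\sigma}\Phi + C\kappa^{-1}\|u_t\|_{H^\sigma}^2 \|g\|_{L^2}^2,
\]
where $\sigma$ can be chosen with $\sigma > \gamma + d/2 + 1$: this is possible precisely because of the hypothesis $\gamma < \alpha - d - 1$, since $\sigma$ can approach $\alpha - d/2$.

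The crucial step is to replace the linear dissipation by a superlinear one via the interpolation $\|(-\Delta)^{(\gamma+1)/2}g\|_{L^2}^2 \geq \Phi^{(\gamma+1)/\gamma} \|g\|_{L^2}^{-2/\gamma}$. Substituting yields a Bernoulli-type differential inequality for $\Phi$, which under the change of variable $w = \Phi^{-1/\gamma}$ linearizes to a Gr\"onwall inequality with forcing. Its solutions admit a universal upper bound \emph{independent of} $\Phi(0) \in [0,\infty]$, of the form
\[
\Phi(1) \lesssim \kappa^{-\gamma}\|g\|_{L^2}^2 \exp\!\left(C\!\int_0^1\!\|u_s\|_{H^\sigma}\,ds\right)\!\left(1+\sup_{s\in[0,1]}\|u_s\|_{H^\sigma}^{2\gamma/(\gamma+1)}\right),
\]
reproducing exactly the parabolic scaling and ensuring $\kappa^\gamma \Phi(1)\lesssim \|g\|_{L^2}^2$ times a random velocity-dependent factor.

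Finally I would take $p$-th moments: Lemma~\ref{lem:TwistBd} controls $\E\exp(pC\int_0^1 \|u_s\|_{H^\sigma}\,ds)$ by $V^p(u)$ for appropriate $\beta, \eta$, and Proposition~\ref{prop:WPapp} controls $\E\sup_s\|u_s\|_{H^\sigma}^{2p\gamma/(\gamma+1)}$ by $V^p(u)$ as well. The hardest part is the treatment of the possibly infinite initial data $\Phi(0)$: a naive Gr\"onwall would give $\Phi(1)\leq \Phi(0)e^{C\int\|u_s\|_{H^\sigma}\,ds}$, useless when $g\notin H^\gamma$. Identifying the nonlinear dissipation from interpolation and tuning the Young's inequality exponents so that the resulting universal bound carries the sharp $\kappa^{-\gamma}$ scaling is the technical heart of the argument.
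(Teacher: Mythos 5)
Your proposal is correct in substance but takes a genuinely different route from the paper. The paper's proof uses a \emph{time-dependent} Fourier weight $\left(1+\sqrt{\kappa}\brak{\grad}\right)^{\gamma t}$: since the exponent vanishes at $t=0$, no regularity of $g$ beyond $L^2$ is ever required, and differentiating the weight in time produces a term $\gamma \log\left(1+\sqrt{\kappa}\brak{k}\right)$ per mode which is dominated by the dissipation $\kappa|k|^2$ plus a constant; the commutator with $u_t\cdot\grad$ is then handled by the same high-low/low-high paraproduct splitting you describe, and Gr\"onwall closes the (linear) estimate. You instead keep the exponent fixed and extract a bound on $\Phi(1)=\|(-\Delta)^{\gamma/2}g_1\|_{L^2}^2$ that is independent of the (possibly infinite) initial value $\Phi(0)$ by exploiting the superlinear dissipation $\|(-\Delta)^{(\gamma+1)/2}g\|_{L^2}^2 \geq \Phi^{1+1/\gamma}\|g\|_{L^2}^{-2/\gamma}$ from interpolation. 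Both are standard instantaneous-smoothing devices, both require exactly the same velocity regularity $\|u\|_{H^{\gamma+1+d/2+}}$ (hence the hypothesis $\gamma<\alpha-d-1$), both recover the sharp $\kappa^{-\gamma}$ scaling, and both feed into Lemma \ref{lem:TwistBd} identically at the end. The paper's version has the advantage that the resulting differential inequality is linear in $\Phi$, so no ODE comparison is needed; yours avoids the slightly exotic time-dependent multiplier at the cost of a Bernoulli-type argument.

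Two small technical caveats. First, the substitution $w=\Phi^{-1/\gamma}$ does not literally linearize the forced inequality: the constant term $c$ transforms into $-\frac{c}{\gamma}w^{1+\gamma}$, which is nonlinear. This is harmless --- a threshold/comparison argument (absorb $c$ into half the superlinear dissipation whenever $\Phi\geq (2c/a)^{\gamma/(\gamma+1)}$, and note that this threshold contributes only $\kappa^{-\gamma/(\gamma+1)}\lesssim\kappa^{-\gamma}$) gives the universal bound you assert, but as written the "Gr\"onwall with forcing" step is not quite right. Second, since $\Phi(t)$ may a priori be infinite for all $t$ when $g\in L^2\setminus H^\gamma$, the differential inequality is only formal; one should run the argument for smooth $g$ (or start the ODE at $t=\epsilon>0$ using qualitative parabolic regularity) and conclude by density, exactly as the paper does in its final line.
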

\begin{proof}
We first deduce a pathwise a priori estimate assuming $g$ is sufficiently smooth. 
To this end, by Plancherel's identity we have for $t \in [0,1]$, 
\begin{align*}
  \frac{1}{2} \frac{\dee}{\dt} \norm{\left(1  + \sqrt{\kappa} \brak{\grad}\right)^{\gamma t}   g_t^\kappa   }_{L^2}^2  & = \sum_{k \in \mathbb Z_\ast^d} \left(\gamma \log \left(1 + \sqrt{\kappa}\brak{k}\right) - \kappa \abs{k}^2\right) \left(1 + \sqrt{\kappa} \brak{k} \right)^{2\gamma t}\abs{\hat{g}^\kappa_t(k)}^2  \\
& \quad+ \brak{\left(1  + \sqrt{\kappa} \brak{\grad}\right)^{\gamma t} g_t^\kappa, \left(1  + \sqrt{\kappa} \brak{\grad}\right)^{\gamma t}\grad \cdot (u_t g_t^\kappa)}_{L^2} \\
& =: \mathcal{I}_1 + \mathcal{I}_2.
\end{align*}
For $\mathcal{I}_1$, by $\gamma \log (1 + x) \leq x^2 + C(\gamma)$,
\begin{align}
\mathcal{I}_1 \lesssim_\gamma  \norm{\left(1  + \sqrt{\kappa} \brak{\grad}\right)^{\gamma t} g_t^\kappa}^2. 
\end{align}
By incompressibility and Plancherel's identity
\begin{align}
\mathcal{I}_2 & = \brak{\left(1  + \sqrt{\kappa} \brak{\grad}\right)^{\gamma t} g_t^\kappa, \left(1  + \sqrt{\kappa} \brak{\grad}\right)^{\gamma t}\grad \cdot (u_t g_t^\kappa) - \grad \cdot (u_t \left(1  + \sqrt{\kappa} \brak{\grad}\right)^{\gamma t}g_t^\kappa) }_{L^2} \\
& = \sum_{k,\ell \in \mathbb Z^d_\ast}  \left(1 + \sqrt{\kappa} \brak{k}\right)^{\gamma t} \overline{\hat{g}_t^\kappa}(k) \left(\left(1 + \sqrt{\kappa} \brak{k}\right)^{\gamma t} - \left(1 + \sqrt{\kappa} \brak{k-\ell}\right)^{\gamma t} \right) ik \cdot \hat{u}_t(\ell) \hat{g}^\kappa_t(k-\ell) \\
& = \sum_{k,\ell \in \mathbb Z^d_\ast} \left(\mathbf{1}_{\abs{k-\ell} \geq \frac{1}{2}\abs{\ell}} + \mathbf{1}_{\abs{k-\ell} < \frac{1}{2}\abs{\ell}} \right) \left(1 + \sqrt{\kappa} \brak{k}\right)^{\gamma t} \overline{\hat{g}^\kappa_t}(k) \\ & \quad\quad \times \left(\left(1 + \sqrt{\kappa} \brak{k}\right)^{\gamma t} - \left(1 + \sqrt{\kappa} \brak{\ell}\right)^{\gamma t} \right) ik \cdot \hat{u}_t(k-\ell) \hat{g}^\kappa_t(\ell) \\
& = \mathcal{I}_{2;HL} + \mathcal{I}_{2;LH}. 
\end{align}
On the support of the $\mathcal{I}_{2;HL}$ (here $HL$ stands for ``high-low''), we use that $\abs{k} + \abs{\ell} \lesssim \abs{k-\ell}$, together with Cauchy-Schwarz,  Young's inequality, $H^r \hookrightarrow \widehat{L^1}$ for $r > \frac{d}{2}$, to deduce that for any $r \in (\frac{d}{2}+1,3)$,
\begin{align}
\abs{\mathcal{I}_{2;HL}} \lesssim \norm{\left(1 + \sqrt{\kappa} \brak{\grad}\right)^{\gamma t} g_t^\kappa}_{L^2} \norm{g_t^\kappa}_{L^2} \norm{\left(1 + \sqrt{\kappa} \brak{\grad}\right)^{\gamma t} u_t}_{H^{r}}. 
\end{align}
On the support of the $\mathcal{I}_{2;LH}$, we use the mean value theorem and $\abs{k} \lesssim \abs{\ell}$ to deduce 
\begin{align}
\left(1 + \sqrt{\kappa} \brak{k}\right)^{\gamma t} - \left(1 + \sqrt{\kappa} \brak{k-\ell}\right)^{\gamma t}  \lesssim  \sqrt{\kappa} \gamma t \left(1 + \sqrt{\kappa} \brak{k-\ell}\right)^{\gamma t - 1} \abs{\ell}.
\end{align}
Therefore, by Cauchy-Schwarz, Young's inequality, and $H^r \hookrightarrow \widehat{L^1}$ for $r > \frac{d}{2}$, for any $r \in (\frac{d}{2}+1,3)$ we have
\begin{align}
\abs{\mathcal{I}_{2;LH}} \lesssim  \norm{u_t}_{H^r} \norm{\left(1 + \sqrt{\kappa} \brak{\grad}\right)^{\gamma t} g_t^\kappa}_{L^2}^2. 
\end{align}
By Gr\"onwall's inequality, there is some $C > 0$ (independent of $g$, $\kappa$, $u$, $t$) such that the following holds, 
\begin{align*}
\norm{\left(1  + \sqrt{\kappa} \brak{\grad}\right)^{\gamma t} g_t^\kappa   }_{L^2}^2 & \leq \exp\left(Ct + \int_0^t \norm{u_\tau}_{H^{r}} \dee\tau\right) \norm{g}_{L^2}^2 \\
 & \quad + \exp\left(Ct + \int_0^t \norm{u_\tau}_{H^{r}} \dee\tau\right) \int_0^t \norm{\left(1  + \sqrt{\kappa} \brak{\grad}\right)^{\gamma \tau}u_\tau}_{H^{r}}^2 \norm{g_\tau}^2_{L^2} \dee \tau. 
\end{align*}
Taking expectations and applying Lemma \ref{lem:TwistBd} completes the a priori estimate \eqref{lem:parabolic}. 
For an arbitrary $g \in L^2$, the desired result follows by density. 
\end{proof}

\begin{lemma}
Under the assumptions of Theorem \ref{thm:Batch}, \eqref{ineq:preg} holds. 
\end{lemma}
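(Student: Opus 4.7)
The strategy is to generalize the proof of Lemma \ref{lem:parabolic} by running the energy estimate directly on the full stochastically-forced equation for $g_t^\kappa$, producing a pathwise moment bound on $\norm{(1 + \sqrt{\kappa}\brak{\grad})^\gamma g_1^\kappa}_{L^2}^{2p}$. Combined with the elementary inequality
\[
\norm{h}_{H^s}^2 \leq \kappa^{-s}\norm{(1+\sqrt{\kappa}\brak{\grad})^s h}_{L^2}^2,
\]
stationarity, and the $L^2$ moment bound $\EE_{\mu^\kappa}\norm{g}_{L^2}^{2p} \lesssim \abs{\log \kappa}^p$ from Lemma \ref{lem:L2upbd}, this will yield \eqref{ineq:preg}. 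We fix $\gamma$ with $s \leq \gamma < \alpha - d - 1$; such a choice is possible under the stated hypotheses since $\sigma \in (\alpha - 2(d-1), \alpha - d/2)$ forces $s < \sigma - \tfrac{3d}{2} - 1 < \alpha - 2d - 1 < \alpha - d - 1$.

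By invariance of $\mu^\kappa$, if $(u_0, g_0) \sim \mu^\kappa$ and $g_t^\kappa$ is driven by a noise path on $[0,1]$ independent of $(u_0, g_0)$, then $g_1^\kappa$ has the same marginal law as $g_0$. Applying It\^{o}'s formula to
\[
Y_t := \norm{(1+\sqrt{\kappa}\brak{\grad})^{\gamma t}g_t^\kappa}_{L^2}^{2p},
\]
the transport--diffusion contribution to the drift is handled exactly as in the proof of Lemma \ref{lem:parabolic}, via the same high-low/low-high decomposition $\mathcal{I}_{2;HL} + \mathcal{I}_{2;LH}$ of the transport commutator and the elementary bound $\gamma\log(1 + x) \leq x^2 + C(\gamma)$. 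The only new contribution is the It\^{o} correction from $b\dot{\beta}$, which produces an extra drift of the form $C_p\norm{(1+\sqrt{\kappa}\brak{\grad})^{\gamma t}b}_{L^2}^2 \cdot Y_t^{1-1/p}$ together with a mean-zero $L^2$-valued martingale. Since $b \in H^\sigma$ and $\gamma \leq \sigma$, we have uniformly in $\kappa \in (0,1]$ and $t \in [0,1]$ that
\[
\norm{(1+\sqrt{\kappa}\brak{\grad})^{\gamma t} b}_{L^2}^2 \lesssim \norm{b}_{L^2}^2 + \kappa^{\gamma t}\norm{b}_{H^{\gamma t}}^2 \lesssim \norm{b}_{H^\gamma}^2.
\]
Combining Gr\"onwall's inequality with Burkholder--Davis--Gundy (to control the $p$-th moment of the martingale correction) and Lemma \ref{lem:TwistBd} (to bound the exponential moment of $\int_0^1\norm{u_\tau}_{H^r}\dee\tau$ for a suitable $r \in (\tfrac{d}{2}+1,3)$ as in the proof of Lemma \ref{lem:parabolic}) yields, for a suitable $V = V_{\beta,\eta}$ with $\eta \in (0,\eta_*)$ and constants $\beta, C$ depending on $p,\gamma$:
\begin{equation}\label{eq:plan-pathmoment}
\EE\norm{(1+\sqrt{\kappa}\brak{\grad})^\gamma g_1^\kappa}_{L^2}^{2p} \lesssim V(u_0)^C\bigl(\norm{g_0}_{L^2}^{2p} + \norm{b}_{H^\gamma}^{2p}\bigr).
\end{equation}

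To conclude, average \eqref{eq:plan-pathmoment} over $(u_0, g_0) \sim \mu^\kappa$ and apply Cauchy--Schwarz to separate the $V(u_0)$ and $\norm{g_0}_{L^2}$ factors: the moment $\EE_\mu V^{2C}(u_0)$ is $\kappa$-independent by the standard Lyapunov estimates on the Navier--Stokes invariant measure, while $\EE_{\mu^\kappa}\norm{g_0}_{L^2}^{4p} \lesssim \abs{\log \kappa}^{2p}$ by Lemma \ref{lem:L2upbd}. Using $\norm{g}_{H^s}^2 \leq \kappa^{-s}\norm{(1+\sqrt{\kappa}\brak{\grad})^\gamma g}_{L^2}^2$ for $s \leq \gamma$ together with stationarity then gives
\[
\EE_{\mu^\kappa}\norm{g}_{H^s}^{2p} = \EE\norm{g_1^\kappa}_{H^s}^{2p} \leq \kappa^{-sp}\,\EE\norm{(1+\sqrt{\kappa}\brak{\grad})^\gamma g_1^\kappa}_{L^2}^{2p} \lesssim \kappa^{-sp}\abs{\log \kappa}^p,
\]
which is \eqref{ineq:preg} after taking $p$-th roots. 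The main technical work lies in the It\^{o}-based energy estimate producing \eqref{eq:plan-pathmoment}, but this is structurally identical to the proof of Lemma \ref{lem:parabolic}; the only novelty is the source correction from $b\dot{\beta}$, which is harmless because $b$ is smooth and the multiplier $(1+\sqrt{\kappa}\brak{\grad})^{\gamma t}$ acts on it uniformly in $\kappa$.
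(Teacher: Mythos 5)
Your proposal is correct, and it reaches \eqref{ineq:preg} by a mildly but genuinely different organization than the paper. The paper exploits linearity: it writes $g_1^\kappa = S_1^\kappa(\omega,u)g + \int_0^1 S_{1-\tau}^\kappa(\theta_\tau\omega,u_\tau)b\,\dee\beta_\tau$, applies the parabolic regularization estimate of Lemma \ref{lem:parabolic} only to the homogeneous term (yielding the $\kappa^{-sp}V^p(u)\norm{g}_{L^2}^{2p}$ contribution), and disposes of the stochastic convolution by Burkholder--Davis--Gundy, Minkowski, and \emph{classical} $H^s$ transport estimates --- no parabolic smoothing is needed there because $b$ is smooth, so that term is simply $\lesssim V^p(u)$ uniformly in $\kappa$. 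You instead run the time-dependent multiplier $(1+\sqrt{\kappa}\brak{\grad})^{\gamma t}$ energy estimate through It\^{o}'s formula on the full forced equation; the transport commutator decomposition, the bound $\gamma\log(1+x)\leq x^2+C(\gamma)$, Gr\"onwall, and Lemma \ref{lem:TwistBd} are identical to the proof of Lemma \ref{lem:parabolic}, and the only new ingredients are the It\^{o} correction $\lesssim_p Y_t^{1-1/p}\norm{(1+\sqrt{\kappa}\brak{\grad})^{\gamma t}b}_{L^2}^2$ and the martingale term, both of which you control correctly (the multiplier acts harmlessly on the smooth source, uniformly in $\kappa\in(0,1]$ and $t\in[0,1]$). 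Both routes then close the same way: stationarity, H\"older/Cauchy--Schwarz to separate $V(u)$ from $\norm{g}_{L^2}$, the $L^2$ moment bound of Lemma \ref{lem:L2upbd}, and the elementary inequality $\norm{h}_{H^s}^2\leq\kappa^{-s}\norm{(1+\sqrt{\kappa}\brak{\grad})^{\gamma}h}_{L^2}^2$ for $s\leq\gamma$. Your choice of $\gamma\in[s,\alpha-d-1)$ is admissible for the stated range of $s$, exactly as you verify. The trade-off: the paper's Duhamel split keeps the stochastic calculus trivial (the It\^{o} isometry/BDG only ever sees the deterministic-in-$\beta$ integrand $S^\kappa_{1-\tau}b$), at the cost of quoting a separate $H^s$ transport estimate; your unified estimate is self-contained but requires carrying the unbounded time-dependent multiplier through It\^{o}'s formula on a $2p$-th power, which needs the same Galerkin/density regularization already implicit in the proof of Lemma \ref{lem:parabolic}. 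Either way the conclusion and the constants' dependence on $\kappa$ are the same.
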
 
\begin{proof}
By standard moment estimates (see e.g. \cite{KS}), $\EE_{\mu^\kappa} \norm{g}_{H^s}^{2p} < \infty$ for $s \in [0,\sigma - \tfrac{3d}{2} -1 )$ and $p < \infty$.   
By stationarity,
\begin{align}
  \E_{\mu^\kappa}\|g\|_{H^s}^{2p} &= \int \EE_{(u,g)}\|g_1^\kappa\|_{H^s}^{2p}\, \dee\mu^\kappa(u,g) \\
  & \lesssim_p \int \EE \|S_1^\kappa(\omega,u) g\|_{H^s}^{2p}\, \dee\mu^\kappa(u,g) + \int\EE \norm{\int_0^1 S_{1-\tau}^\kappa(\theta_\tau\omega,u_\tau) b \, \dee\beta_\tau }_{H^s}^{2p}  \dee\mu^\kappa(u,g). 
\end{align}
By Lemma \ref{lem:parabolic}, $\exists \beta \geq 0$ such that for $\forall \eta>0$, $V = V_{\beta,\eta}$, and all $(u,g) \in \Hbf \times L^2$, 
\begin{align}
\E \norm{S_1^\kappa(\omega,u)g}_{H^s}^{2p} \lesssim \kappa^{-sp}V^{p}(u) \norm{g}_{L^2}^{2p}. 
\end{align}
Because $b\in C^\infty$, it follows from Burkholder-Davis-Gundy, Minkowski's inequality, and classical $H^s$ estimates for the transport equation (together with Lemma \ref{lem:RandCon}) that $\exists \beta \geq 0$ such that $\forall \eta > 0$, there holds for suitable $V =V_{\beta,\eta}$,  
\begin{align}
\EE \norm{\int_0^1 S_{1-\tau}^\kappa(\theta_\tau\omega,u_\tau) b \, \dee\beta_\tau }_{H^s}^{2p}
\lesssim \int_0^1 \E \norm{S_{1 - \tau}^\kappa(\theta_\tau \omega, u_\tau) b}_{H^s}^{2p} \dee\tau
\lesssim V^p(u). 
\end{align}
Therefore by Lemmas \ref{lem:L2upbd}, \ref{lem:parabolic}, and H\"older's inequality, 
\begin{align}
\E_{\mu^\kappa}\|g\|_{H^s}^{2p} & \lesssim \E_{\mu^\kappa}\left[ V^{p}(u) \left(1 + \kappa^{-sp}\norm{g}_{L^2}^{2p}\right)\right] \lesssim_{p} \kappa^{-sp}\abs{\log \kappa}^{p}.
\end{align}
This completes the proof of \eqref{ineq:preg}. 
\end{proof}

\section{Vanishing diffusivity limit} \label{sec:Inviscid-Limit}

This section is devoted to the proof of Theorem \ref{thm:kappato0}
describing weak limits of the stationary measures $\mu^\kappa$ 
for the passive scalar process $(u_t, g_t^\kappa)$. 
In Section \ref{subsec:tightness} we show that such weak limits exist in the weak topology of of measures on $\Hbf\times H^{-s}$ and satisfy Batchelor's Law
\eqref{eq:infinite-Batchelor} over an infinite inertial range.

Once these are established, it remains to show that $\mu^0$ is a stationary
measure for the passive scalar process $(u_t, g_t^0)$. As we check in 
Section \ref{subsec:restrictedFeller}, the latter can be extended to a Markov 
process defined by a random dynamical system on $\Hbf\times H^{-1}$, the trajectories of which are weak solutions to the passive scalar advection equation at $\kappa = 0$. However, to check that weak limits $\mu^0$ are stationary is a non-trivial task due to poor continuity properties of $(u_t, g_t^0)$ in $H^{-1}$ with respect perturbations of the initial data $u\in\Hbf$ for the velocity field $(u_t)$, 
and so stationarity of $\mu^0$ requires careful justification-- see Section \ref{sec:restrictedFeller} for more discussion.
The proof is completed in Section \ref{subsec:concludeStationarity}.

Finally we complete this section with a proof of the $L^2$ non-vanishing flux relation \eqref{eq:AD212}.

\subsection{Existence of weak limits of $\{\mu^\kappa\}$}\label{subsec:tightness}

Corollary \ref{cor:unifHdeltaEstBatch} of Batchelor's law in Theorem \ref{thm:kappato0} 
is that for each $s \in (0,1]$ we have the uniform in $\kappa$ estimate
\begin{align}\label{eq:unifKappaEst}
	\sup_{\kappa > 0}\E_{\mu^\kappa}\|g\|_{H^{-s}}^{2p} \lesssim_{s, p} 1 \, .
\end{align}
Since $H^{-s/2}$ is compactly embedded in $H^{-s}$ for all $s > 0$, we can use this to 
show tightness of $\{ \mu^\kappa\}$, which permits us to take weak limits by Prokhorov's theorem.
Precisely, we have the following.
\begin{lemma}
There exists a sequence $\{\kappa_n\}$, $\kappa_n \to 0$ as $n\to \infty$ and a measure $\mu^0$ on $\Hbf\times L^2$ such that for each $s \in (0,1]$, the following holds:
$\mu^{\kappa} \to \mu^0$ weakly as a measure on $\Hbf\times H^{-s}$, i.e., for each $\phi \in C_b(\Hbf\times H^{-s})$ we have
\[
	\int \phi\, \dee\mu^{\kappa_n} \to \int \phi \,\dee \mu^0 \, .
\]
\end{lemma}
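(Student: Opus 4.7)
The plan is a standard application of Prokhorov's theorem. I will establish tightness of the family $\{\mu^\kappa\}_{\kappa > 0}$ on $\Hbf \times H^{-s}$ for every $s \in (0,1]$, extract a subsequence by Prokhorov, and run a diagonal argument to obtain a single weak limit $\mu^0$ valid simultaneously for every $s \in (0,1]$.

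For tightness of the velocity marginal, projecting the stationarity condition for $(u_t,g_t^\kappa)$ onto $\Hbf$ shows that the $\Hbf$-marginal of every $\mu^\kappa$ equals the unique stationary measure $\mu$ of Proposition \ref{prop:uniqueStatMeasNSE}. This marginal is $\kappa$-independent, hence trivially tight as a Radon probability measure on the Polish space $\Hbf$; alternatively, the estimates of Proposition \ref{prop:WPapp} yield uniform moment bounds on $\|u\|_{\Hbf^{\sigma'}}$ for some $\sigma' \in (\sigma,\alpha-d/2)$, producing explicit tight sublevel sets via the compact embedding $\Hbf^{\sigma'} \hookrightarrow \Hbf$. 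For the scalar marginal, fix $s \in (0,1]$ and apply Corollary \ref{cor:unifHdeltaEstBatch} with parameter $s/2$ to get $\sup_\kappa \E_{\mu^\kappa}\|g\|_{H^{-s/2}}^2 \lesssim_s 1$. Since $H^{-s/2} \hookrightarrow H^{-s}$ is compact, Chebyshev's inequality turns closed balls in $H^{-s/2}$ into tight sets for $\{\mu^\kappa\}$ regarded as measures on $\Hbf \times H^{-s}$.

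To produce a single subsequence valid for every $s \in (0,1]$, I perform a diagonal extraction along $s_n = 1/n$. Starting from $s_1 = 1$, Prokhorov's theorem yields a subsequence $(\kappa^{(1)}_j)_j \to 0$ and a probability measure $\nu^{(1)}$ on $\Hbf \times H^{-s_1}$ with $\mu^{\kappa^{(1)}_j} \rightharpoonup \nu^{(1)}$. Inductively, from $(\kappa^{(n-1)}_j)$ I extract $(\kappa^{(n)}_j)$ along which weak convergence holds on $\Hbf \times H^{-s_n}$ to some $\nu^{(n)}$. Because the inclusion $H^{-s_n} \hookrightarrow H^{-s_{n-1}}$ is continuous, uniqueness of weak limits forces $\nu^{(n-1)}$ to be the pushforward of $\nu^{(n)}$ under this inclusion, so the $\nu^{(n)}$ are mutually consistent. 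By Fatou's lemma applied to the uniform bound $\sup_\kappa\E_{\mu^\kappa}\|g\|_{H^{-s}}^{2p}\lesssim_{s,p} 1$, each $\nu^{(n)}$ is supported in $\Hbf \times H^{-}$, where $H^{-} := \bigcap_{s > 0} H^{-s}$; these consistent measures therefore agree with a single Borel measure $\mu^0$ on $\Hbf \times H^{-}$. Setting $\kappa_n = \kappa^{(n)}_n$, the tail $(\kappa_m)_{m \geq n}$ is a subsequence of $(\kappa^{(n)}_j)_j$ for each fixed $n$, so $\mu^{\kappa_m} \rightharpoonup \mu^0$ on $\Hbf \times H^{-s_n}$.

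Finally, given any $s \in (0,1]$, pick $n$ with $s_n \leq s$; the inclusion $H^{-s_n} \hookrightarrow H^{-s}$ is continuous, so every $\phi \in C_b(\Hbf \times H^{-s})$ restricts to an element of $C_b(\Hbf \times H^{-s_n})$, which transfers weak convergence along $(\kappa_m)$ from $\Hbf \times H^{-s_n}$ to $\Hbf \times H^{-s}$. The entire argument is routine; there is no real obstacle beyond careful bookkeeping in the diagonal extraction and the verification that the iterated limits $\nu^{(n)}$ are consistent, all of which is handled by the compact chain $H^{-s/2} \hookrightarrow H^{-s}$ and the $\kappa$-uniform estimate \eqref{eq:unifKappaEst} inherited from Theorem \ref{thm:Batch}.
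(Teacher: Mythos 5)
Your proof is correct and follows essentially the same route as the paper: the $\kappa$-uniform moment bound from Corollary \ref{cor:unifHdeltaEstBatch} combined with the compact embedding $H^{-s/2}\hookrightarrow H^{-s}$ and Chebyshev gives tightness, then Prokhorov plus a diagonal extraction over a sequence $s_n\to 0$ yields a single limit valid for all $s\in(0,1]$. Your treatment is in fact slightly more careful than the paper's in two minor respects---you explicitly handle tightness of the (fixed) $\Hbf$-marginal and verify consistency of the iterated limits $\nu^{(n)}$---but these are bookkeeping points, not a different argument.
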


\begin{proof}
Fix $s, \ep > 0$ and define
\[
	K = \{h \in H^{-s}\,:\, \|h\|_{H^{-s/2}}^2\leq \ep^{-2}\E_{\mu^\kappa}\|g\|_{H^{-s/2}}^2\} \, .
\]
In light of the compact embedding $H^{-s/2}\hookrightarrow H^{-s}$, $K$ is a compact subset of $H^{-s}$; by \eqref{eq:unifKappaEst} and Chebyshev's inequality, we have
\[
	\sup_{\kappa\in (0,\kappa_0]} \mu^\kappa(\Hbf\times(H^{-s}\backslash K)) \leq \ep \, .
\]
It follows by Prokhorov's theorem that $(\mu^\kappa)_{\kappa > 0}$ has a subsequence that converges weakly as a measure on $\Hbf\times H^{-s}$. Choosing $s = s_j = 2^{-j}$ we can extract a diagonal subsequence $\kappa_n$ and a limit measure $\mu^0$ such that $\mu^{\kappa_n}\to\mu^{0}$ weakly as a measure on $\Hbf\times H^{-s_j}$ for every $j > 0$, hence on $\Hbf \times H^{-s}$ for every $s \in (0,1]$. 
\end{proof}

It is now straightforward to verify the infinite-inertial range version of Batchelor's law as in 
\eqref{eq:infinite-Batchelor}. 
With $\mu^0 = \lim_n \mu^{\kappa_n}$ 
fixed as above, observe that $g \mapsto \| \Pi_{\leq N} g \|_{L^2}$
varies continuously in $H^{-s}$ and by a straight forward truncation argument along with the fact that
\[
	\sup_{\kappa\in [0,1]}\E_{\mu^\kappa}\|\Pi_{\leq N}g\|_{L^2}^{2p} < \infty
\]
to conclude that
\[
	\E_{\mu^0}\| \Pi_{\leq N} g \|_{L^2}^{2p} = \lim_{n\to\infty} \E_{\mu^{\kappa_n}}\|\Pi_{\leq N}g\|_{L^2}^{2p} \approx_p (\log{N})^p,
\]
for all $N \geq N_0$, using the fact that the inertial range $N_0 \leq N \lesssim \kappa^{-1/2}$ 
grows to all of $\{ N \geq N_0\}$ as $\kappa \to 0$. 

\subsection{The scalar process $(u_t, g_t^0)$ at $\kappa = 0$}\label{subsec:restrictedFeller}

In this section we study the properties of the passive scalar process $(u_t, g_t^0)$ which solves the advection equation
\begin{equation}\label{eq:kappa=0-eq}
	\partial_t g_t^0 + u_t\cdot\nabla g_t^0 = b\dot{\beta}_t,
\end{equation}
in the absence of any diffusion term. Since the limiting measure $\mu^0$ assigns full measure to $H^{-1}$, it is natural to consider $t \mapsto g_t^0$ as a process in the negative Sobolev space $H^{-1}$, for which
it will be necessary to show that weak solutions to \eqref{eq:kappa=0-eq} are well-posed for $H^{-1}$ initial data. 

To formulate this, recall that the solution operator $S^0_t(\omega, u) : L^2 \to L^2$, defined by
\[
S_t^0(\omega, u) g = g \circ (\phi^t_{\omega, u})^{-1}
\]
solves \eqref{eq:kappa=0-eq} in the absence of the noise term $b \dot{\beta}_t$, where 
$\phi^t_{\omega, u} : \T^d \to \T^d$ is the Lagrangian flow associated to $(u_t)$. By incompressibility, for 
$f \in H^1, g \in L^2$ we have
\begin{align}\label{eq:dualSalbatross}
\int f S_t^0(\omega, u) g \,\dx = \int (f \circ \phi^t_{\omega, u}) g\, \dx \, .
\end{align}
\begin{lemma}\label{lem:H-1-semigroup}
The following holds
\begin{itemize}
\item[(a)] For each $t>0$ operator $S_t^0(\omega, u)$ defined by \eqref{eq:kappa=0-eq} admits a unique extension to a bounded linear operator
on $H^{-1}$ satisfying \eqref{eq:dualSalbatross} for all $g \in L^2, f \in H^{-1}$ and for all $g\in H^{-1}$, $t\mapsto S_{t}(\omega,u)g$ is strongly continuous in $H^{-1}$.
\item[(b)] For initial data $g \in H^{-1}$ the unique weak solution $g_t^0$ to equation \eqref{eq:kappa=0-eq} is is given by
\[
g_t^0 = S_{t}^0(\omega,u)g + \int_0^t S_{t-s}^0(\theta_s\omega,u_s)b \,\dee \beta_s \, .
\]
\end{itemize}
\end{lemma}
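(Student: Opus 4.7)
My approach to part (a) is to extend $S_t^0(\omega, u)$ to $H^{-1}$ by duality, using that its formal $L^2$-adjoint is the composition operator $T_t f := f \circ \phi^t_{\omega, u}$. The boundedness of $T_t$ on $H^1$ reduces to controlling $\|D \phi^t\|_{L^\infty}$, a quantity whose exponential is already handled in Lemma \ref{lem:twist-cantelli}. For (b) I will write down the obvious Duhamel ansatz, verify that it solves the equation in the weak sense, and deduce uniqueness by testing the homogeneous difference against a transported test function.

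\textbf{Step 1 (composition on $H^1$).} For $f \in H^1$, incompressibility gives $\|T_t f\|_{L^2} = \|f\|_{L^2}$ and the chain rule gives $\|\nabla T_t f\|_{L^2} \leq \|D \phi^t\|_{L^\infty}\|\nabla f\|_{L^2}$. The ODE for $D\phi^t$ and the fact that $u_s \in C^3$ yields the pointwise bound $\|D\phi^t\|_{L^\infty} \leq \exp\bigl(\int_0^t \|\nabla u_s\|_{L^\infty}\,\dee s\bigr)$, which is $\P$-a.s. locally bounded in $t$ with $V$-bounded moments by Lemma \ref{lem:twist-cantelli}. Hence $T_t : H^1 \to H^1$ is bounded almost surely, with operator norm uniform in $t$ on compact intervals.

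\textbf{Step 2 (duality extension and continuity).} For $g \in L^2$ and $f \in H^1$, the change of variables and incompressibility give $\langle S_t^0 g, f\rangle = \langle g, T_t f\rangle$, so $S_t^0$ is the $L^2$-adjoint of $T_t$. Dualizing Step 1 extends $S_t^0$ uniquely to a bounded operator on $H^{-1}$ with norm at most $\|T_t\|_{H^1 \to H^1}$, and \eqref{eq:dualSalbatross} extends to all $g \in H^{-1}$, $f \in H^1$ by density. For strong continuity in $H^{-1}$: when $g \in L^2$ the map $t \mapsto S_t^0 g$ is $L^2$-continuous (constant norm plus flow continuity), hence $H^{-1}$-continuous; for general $g \in H^{-1}$, approximate by $g_n \in L^2$ converging in $H^{-1}$ and use the uniform-in-$t$ operator bound to transfer continuity via uniform convergence on compact time intervals.

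\textbf{Step 3 (part (b)).} The $H^{-1}$-valued stochastic integral is well-defined by It\^o's isometry in $H^{-1}$ using the operator bound from Step 2. Testing the Duhamel formula against arbitrary $f \in H^1$ and applying It\^o reproduces exactly the weak formulation of \eqref{eq:kappa=0-eq}, proving existence. For uniqueness, the difference $w_t = g_t^1 - g_t^2$ of two weak solutions satisfies the noise-free transport equation with $w_0 = 0$. Given any $f \in H^1$, set $\psi_t := f \circ (\phi^t)^{-1}$; then $\psi_t$ solves $\partial_t \psi_t + u_t \cdot \nabla \psi_t = 0$ classically, and a distributional computation (justified by approximation of $w_t$ and $u_t$) shows $\tfrac{\dee}{\dee t}\langle w_t, \psi_t\rangle = 0$ via cancellation of the two transport terms. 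Since $T_t$ is invertible on $H^1$ with inverse given by composition with $(\phi^t)^{-1}$, the collection $\{f \circ (\phi^t)^{-1} : f \in H^1\}$ exhausts $H^1$, so $\langle w_t, h\rangle = 0$ for every $h \in H^1$ and hence $w_t = 0$ in $H^{-1}$.

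\textbf{Main obstacle.} The most delicate point is the uniqueness argument in Step 3: since $w_t$ only lives in $H^{-1}$ and $u_t \in H^\sigma$ only, the pointwise manipulations $\tfrac{\dee}{\dee t}\langle w_t, \psi_t\rangle = \langle\partial_t w_t, \psi_t\rangle + \langle w_t, \partial_t \psi_t\rangle$ must be interpreted distributionally. This will be handled by mollifying $w_t$ in space (or equivalently using the smoothness of $\psi_t$ and the $H^1$ regularity of the pairing) so that both transport terms are classical and cancel via the incompressibility of $u_t$.
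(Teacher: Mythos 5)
Your proposal is correct and follows essentially the same route as the paper: part (a) is obtained by dualizing the composition operator $f \mapsto f \circ \phi^t_{\omega,u}$ on $H^1$, with the gradient growth controlled by $\exp\bigl(\int_0^t \|\nabla u_s\|_{L^\infty}\,\dee s\bigr)$ and hence by Lemma \ref{lem:twist-cantelli}, and strong continuity follows by density. The paper simply asserts that (b) follows from (a); your Step 3 (Duhamel ansatz plus uniqueness via pairing against the transported test functions $f\circ(\phi^t)^{-1}$, which exhaust $H^1$) is the natural way to fill in that detail and is consistent with the paper's intent.
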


\begin{proof}
Part (b) follows from (a), which in turn follows from a density argument
and the following estimate for $g\in L^2$
\begin{align}\label{eq:H-1bound}
\| S^0_t(\omega, u) g \|_{H^{-1}} \leq \bar D(\omega, u) e^{c t} \| g \|_{H^{-1}},
\end{align}
where $c > 0$ and for each $p \in [1,\infty)$ we can take $\bar D(\omega, u) : \Omega \times \Hbf \to \R_{\geq 1}$ to have $V$-bounded $p$-th moment. To see this, fix $f \in H^1, g \in L^2$, and observe that by Gr\"{o}nwall's inequality and Lemma \ref{lem:twist-cantelli}
\begin{equation}
\begin{aligned}
	|\nabla (f\circ \phi_t^{\omega,u})(x)| &\leq \exp\left(\int_0^t\|\nabla u_s\|_{L^\infty} \ds\right)|\nabla f(\phi^t_{\omega, u}(x))|\\
	&\leq D_0(\omega,u)e^{ct}|\nabla f(\phi^t_{\omega, u}(x))|,
\end{aligned}
\end{equation}
 where
$c > 0$ and $D_0(\omega, u)$ has $V$-bounded $p$-th moment. Using incompressibility
of $\phi^t_{\omega, u}$, we obtain
\[
	\| f \circ \phi^t_{\omega, u}\|_{H^1} 
\leq D_0(\omega, u) e^{c t} \| f \|_{H^1},
\]
hence
\[
\left|\int S_t^0(\omega, u) g f \dx \right| = \left| \int g ( f \circ \phi^t_{\omega, u}) \dx \right| 
\leq \| g \|_{H^{-1}} \| f \circ \phi^t_{\omega, u} \|_{H^1} 
\leq  D_0(\omega, u) e^{c t} \| g \|_{H^{-1}} \|f\|_{H^1} \, .
\]
We conclude $\| S_t^0(\omega, u) g \|_{H^{-1}} \leq D_0(\omega, u) e^{c t} \| g \|_{H^{-1}}$, as
desired. Strong continuity of $t\mapsto S_t^0$ follows by density of $C^\infty(\T^d)$ in $H^{-1}$.
\end{proof}

\subsection{Stationarity of $\mu^0$: restricted Feller property of $(u_t, g_t^0)$}\label{sec:restrictedFeller}

With the evolution $t \mapsto g_t^0$ defined, let $P_t^0$ denote its corresponding Markov semigroup: 
\[
P_t^0 \phi(u, g) := \E_{(u,g)} \phi(u_t, g_t^0)
\]
for each bounded measurable $\phi : \Hbf \times H^{-1} \to \R$. We seek to show that if $\mu^0$ is a weak limit of $\mu^{\kappa_n}, \kappa_n \to 0$, then
$\mu^0$ is stationary for $(u_t, g_t^0)$; equivalently it suffices to show that 
for all globally Lipschitz, bounded $\phi : \Hbf \times H^{-1} \to \R$, we have
\[
\int P_1^0 \phi\, \dee \mu^0 = \int \phi\, \dee \mu^0 \, .
\]
By the weak convergence $\mu^{\kappa_n} \to \mu^0$ and 
stationarity of $\mu^{\kappa_n}$, it suffices to show that
\[
\lim_{n\to\infty}\int P_1^{\kappa_n} \phi \,\dee \mu^{\kappa_n} = \int P_1^0 \phi \,\dee \mu^0,
\]
 where $P^\kappa_t$ is 
the Markov semigroup on $\Hbf \times L^2$ for the 
process $(u_t, g_t^\kappa)$. To this end the strategy is to write
\begin{equation}\label{eq:two-terms}
	\left|\int P_1^{\kappa_n}\phi \,\dee \mu^{\kappa_n} - \int P_1^0 \phi\, \dee \mu^0\right| \leq \left|\int (P_1^{\kappa_n}\phi - P_1^0\phi) \,\dee \mu^{\kappa_n}\right| +  \left|\int P_1^0 \phi\, (\dee \mu^{\kappa_n} - \dee \mu^0)\right| \,
\end{equation}
and show that each term vanishes as $n\to \infty$. The first term is relatively straightforward to bound, having to do with the nearness of 
$g_t^{\kappa_n}, g_t^0$ when initiated at the same initial condition $g \in L^2$
and subjected to the same noise sample $b \beta_t$. 
This is dealt with by Lemma \ref{lem:firstTermZeroSparrow} below. 

The second term in \eqref{eq:two-terms} is more challenging. 
One would like to use weak convergence to justify 
passing the limit using weak convergence $\mu^{\kappa_n} \to \mu^0$, but this
will not work: to the best of our knowledge, the semigroup $P_t^0$ for $(u_t, g_t^0)$
does not send continuous functions on $\Hbf\times H^{-1}$ to continuous functions $\Hbf\times H^{-1}$, namely $P_t^0$ is not {\em Feller} on $\Hbf\times H^{-1}$. When $\kappa > 0$ it is a straightforward consequence of parabolic regularity that the 
semigroup $P_t^\kappa$ is Feller on $\Hbf\times L^2$. However, when $\kappa =0$ this mechanism is not available and consequently $P_t^0$ is not necessarily Feller on $\Hbf\times H^{-1}$ due to a lack of stability of the transport equation in $H^{-1}$ under perturbations of the velocity field. 

Instead, we will show the following \emph{restricted Feller property}: 
namely $P_t^0$ has the Feller property in $\Hbf\times H^{-1}$ when restricted to initial $g$ in a bounded subset of $L^2$. We state this property below for Lipschitz observables.

\begin{lemma}[Restricted Feller]\label{lem:restricted-feller}
Let $\phi \in \mathrm{Lip}(\Hbf\times H^{-1})$, then for all $u,u^\prime\in \Hbf$ and $g,g^\prime \in L^2$ we have
\[
	|P_1^0\phi(u,g) - P_1^0\phi(u^\prime,g^\prime)| \leqc \|\phi\|_{\mathrm{Lip}} W(u,g,u^\prime,g^\prime)\left( \|u- u^\prime\|_{\Hbf} + \|g - g^\prime\|_{H^{-1}}\right),
\]
where
\[
	W(u,g,u^\prime,g^\prime) = (V(u) + V(u^\prime))(1+\|g\|_{L^2})(1+\|g^\prime\|_{L^2}),
\]
and $V = V_{\beta,\eta}$ for a sufficiently large universal $\beta > 0$ and all $\eta>0$.
\end{lemma}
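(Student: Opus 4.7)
The plan is to write
\[
P_1^0 \phi(u, g) - P_1^0 \phi(u', g') = \EE \bigl[ \phi(u_1, g_1^0) - \phi(u_1', g_1'^{,0}) \bigr],
\]
where the processes $(u_t, g_t^0)$ and $(u_t', g_t'^{,0})$ are driven by the \emph{same} Brownian paths $W$ and $\beta$ but initialized from $(u, g)$ and $(u', g')$, respectively. The Lipschitz hypothesis on $\phi$ immediately reduces matters to estimating the two contributions $\EE \|u_1 - u_1'\|_{\Hbf}$ and $\EE\|g_1^0 - g_1'^{,0}\|_{H^{-1}}$. The first is precisely what Lemma~\ref{lem:uStablEst} provides with $p = 1$, giving $\lesssim (V(u) + V(u')) \|u - u'\|_{\Hbf}$, which already has the desired structure after allowing a sufficiently large universal $\beta$ in $V = V_{\beta, \eta}$.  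The remainder of the proof concerns the scalar contribution.

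The key observation is that, since the two scalar evolutions share the same source $b \dot\beta_t$, this term cancels identically when subtracting the equations for $g_t^0$ and $g_t'^{,0}$. Consequently, $w_t := g_t^0 - g_t'^{,0}$ satisfies the purely pathwise transport--perturbation equation
\[
\partial_t w_t + u_t \cdot \nabla w_t = -\nabla \cdot\bigl( (u_t - u_t') g_t'^{,0}\bigr), \qquad w_0 = g - g',
\]
where we have used $\Div(u_t - u_t') = 0$ to recast the forcing in divergence form. Duhamel's formula combined with the $H^{-1}$ estimate \eqref{eq:H-1bound} of Lemma~\ref{lem:H-1-semigroup} gives
\[
\|w_1\|_{H^{-1}} \lesssim \bar D(\omega, u) \|g - g'\|_{H^{-1}} + \int_0^1 \bar D(\theta_s\omega, u_s) \|u_s - u_s'\|_{L^\infty} \|g_s'^{,0}\|_{L^2} \, \ds,
\]
where the integrand bound uses $\| \nabla \cdot (v h)\|_{H^{-1}} \leq \|v\|_{L^\infty} \|h\|_{L^2}$ and the Sobolev embedding $\Hbf \hookrightarrow L^\infty$ (recall $\sigma > d/2 + 3$). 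Taking expectation of the first term and using the $V$-bounded $p$-th moment of $\bar D$ yields the desired contribution $\lesssim V(u)\|g-g'\|_{H^{-1}}$.

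To conclude, I take the expectation of the integral and apply H\"older's inequality with three exponents $\tfrac{1}{p_1} + \tfrac{1}{p_2} + \tfrac{1}{p_3} = 1$, yielding three independent moment estimates: (i) $\EE \bar D^{p_1}(\theta_s\omega, u_s) \lesssim V(u) + 1$ by Lemma~\ref{lem:RandCon} applied to $\bar D$; (ii) $\EE \|u_s - u_s'\|_{L^\infty}^{p_2} \lesssim (V(u)^{p_2} + V(u')^{p_2}) \|u - u'\|_{\Hbf}^{p_2}$ by Sobolev embedding combined with Lemma~\ref{lem:uStablEst}; and (iii) $\EE\|g_s'^{,0}\|_{L^2}^{p_3} \lesssim 1 + \|g'\|_{L^2}^{p_3}$ via It\^o isometry together with the fact that $S^0_t(\omega, u')$ is an isometry on $L^2$ under incompressibility. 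The resulting product of $V$'s at various fractional exponents, of the form $V(u)^{1/p_1}(V(u) + V(u'))$, can be absorbed into a single Lyapunov function $V = V_{\beta, \eta}$ of sufficiently large $\beta$ (and correspondingly small $\eta < \eta_*$), using $V(u) \geq 1$ and monotonicity in $\beta$. This yields the claimed bound with $W = (V(u) + V(u'))(1 + \|g\|_{L^2})(1 + \|g'\|_{L^2})$ (the $(1 + \|g\|_{L^2})$ factor in the statement is generous, since one of the two arguments suffices). The main obstacle is purely the bookkeeping needed to balance the H\"older exponents against the admissible $V$-moments, but no new analytic input is needed beyond stability and moment estimates already established.
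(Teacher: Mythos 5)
Your proposal is correct and follows essentially the same route as the paper: couple the two processes with the same noise, note the source cancels, propagate $g-g'$ by $S_t^0$ in $H^{-1}$ via \eqref{eq:H-1bound}, and treat the velocity perturbation by Duhamel plus H\"older with the moment bounds of Lemmas \ref{lem:RandCon}, \ref{lem:uStablEst} and It\^o for $\|g_s'\|_{L^2}$. The only (harmless) cosmetic differences are that you handle both perturbations in one computation rather than the paper's two cases, and you bound the forcing in divergence form via $\|\nabla\cdot(v h)\|_{H^{-1}}\leq \|v\|_{L^\infty}\|h\|_{L^2}$ where the paper uses the product estimate $\|fh\|_{H^{-1}}\leq\|f\|_{W^{1,\infty}}\|h\|_{H^{-1}}$.
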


\begin{proof}
To prove this it suffices to consider two cases: (i) $u= u^\prime$ and $g\neq g^\prime$ and (ii) $u \neq u^\prime$, $g^\prime = g$. 

Case (i) is straightforward since the difference between 
any two solutions $g_t$ and $g_t^\prime$ to \eqref{eq:kappa=0-eq} 
with different initial data and the same noise and same velocity field $(u_t)$
 immediately satisfy
\[
	g_t - g_t^\prime = S_t^0(\omega,u)(g-g^\prime) \, , 
\]
hence by \eqref{eq:H-1bound},
\begin{equation}
\begin{aligned}
	|P_1^0\phi(u,g) - P_1^0(u,g^\prime)| &\leq \|\phi\|_{\mathrm{Lip}} \E \bar D(u,\cdot) \|g - g^\prime\|_{H^{-1}}\\
	&\leqc \|\phi\|_{\mathrm{Lip}} V(u) \|g - g^\prime\|_{H^{-1}}.
\end{aligned}
\end{equation}

For case (ii) let $u_t$ and $u_t^\prime$ be two solutions to \eqref{sys:NSE} with initial data $u$ and $u^\prime$ respectively and $g_t$ and $g^\prime_t$ the solutions to \eqref{eq:kappa=0-eq} with velocity fields $(u_t)$ and $(u_t^\prime)$ respectively and the same initial data $g\in H^1$. Then the difference $\tilde{g}_t = g_t - g^\prime_t$ satisfies
\[
	\partial_t\tilde{g}_t + u_t\cdot\nabla \tilde{g}_t + (u_t-u_t^\prime)\cdot\nabla g^\prime_t = 0, \quad \tilde{g}_0 = 0,
\]
and therefore can we written as
\[
	\tilde{g_t} = \int_0^t S_{t-s}^0(\theta_s\omega,u_s)\left[(u_s - u^\prime_s)\cdot \nabla g_s^\prime \right]\ds.
\]
It follows from \eqref{eq:H-1bound} (choosing $\bar D$ to have a $V$-bounded 4th moment) that for $0\leq s<t\leq 1$
\begin{equation}
\begin{aligned}
	\|S_{t-s}^0(\theta_s\omega,u_s)\left[(u_s - u^\prime_s)\cdot \nabla g_s^\prime \right]\|_{H^{-1}} &\leq \bar D(\theta_s\omega,u_s)\|(u_s - u^\prime_s)\cdot \nabla g_s^\prime\|_{H^{-1}}\\
	&\leq \bar D(\theta_s\omega,u_s)\|u_s - u^\prime_s\|_{W^{1,\infty}}\|g_s^\prime\|_{L^2},
\end{aligned}
\end{equation}
where in the last inequality we used that if $f\in W^{1,\infty}$ and $h\in H^{-1}$ then $\|fh\|_{H^{-1}}\leq \|f\|_{W^{1,\infty}}\|h\|_{H^{-1}}$, which can be seen by duality between $H^1$ and $H^{-1}$. Consequently we have
\begin{equation}
\begin{aligned}
	\E\|\tilde{g}_1\|_{H^{-1}}
	&\leq \int_0^1 \E\left[ \bar D(\theta_{s}\omega,u_s) \|u_s -u_s^\prime\|_{\Hbf}\|g^\prime_s\|_{L^2}\right]\ds
\end{aligned}
\end{equation}
We use H\"older's inequality under the expectation $\EE$. The 4th moment $\E \bar D(\omega, u)^4$
is bounded $\lesssim V(u)$ by construction, while by 
Lemma \ref{lem:uStablEst} we have 
$\E\|u_s - u^\prime_s\|_{\Hbf}^4 \lesssim (V(u) + V(u')) \| u - u'\|_{\Hbf}$. 
To bound $\E \| g^\prime_s\|_{L^2}^2$, we estimate as follows:
by It\^o's formula,
\[
\| g_s'\|_{L^2}^2 = \| g\|_{L^2}^2 + 2 \chi s + \int_0^s \langle g_r', b\rangle_{L^2} \,\dee \beta_r \, .
\]
By taking $\E$ of both sides, the stochastic integral vanishes, hence $\E \| g_s'\|_{L^2}^2 = \| g \|_{L^2}^2 + 2 \chi s$. Putting this together, we conclude for suitable $V(u)$
\[
	|P_1\phi(u,g) - P_1\phi(u^\prime,g)| \leq \|\phi\|_{\mathrm{Lip}}\E\|\tilde{g}_1\|_{H^{-1}} \leqc_p \|\phi\|_{\mathrm{Lip}}(V(u) + V(u^\prime))(1+\|g\|_{L^2})\|u-u^\prime\|_{\Hbf}.
\]
\end{proof}

\subsection{Stationarity of $\mu^0$: completing the proof}\label{subsec:concludeStationarity}

It suffices to show that the right-hand side of \eqref{eq:two-terms} vanishes. 
The first term $|\int (P^{\kappa_n}_1 \phi - P^0_1 \phi) \dee \mu^{\kappa_n}|$ in \eqref{eq:two-terms} will be estimated using the following. 

\begin{lemma}\label{lem:firstTermZeroSparrow}
Let $g_t^\kappa$ and $g_t^0$ have the same initial data $g \in L^2$, the same velocity field 
$(u_t)$, and the same source path $b\beta_t$. Then,
\[
	\E\|g_1^\kappa - g_1^0\|_{H^{-1}} \leqc V(u) \int_0^1 \kappa \E \|g^\kappa_s\|_{H^1}\ds.
\]
\end{lemma}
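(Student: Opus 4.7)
The difference $h_t := g_t^\kappa - g_t^0$ with $h_0 = 0$ satisfies the forced transport equation
\[
\partial_t h_t + u_t \cdot \nabla h_t = \kappa \Delta g_t^\kappa,
\]
since the stochastic source $b\dot\beta_t$ and the transport term cancel when subtracting the two equations. The first step is to write $h_1$ via Duhamel's formula with respect to the inviscid solution operator $S^0_{t,s}(\omega,u) = S^0_{t-s}(\theta_s\omega, u_s)$:
\[
h_1 \;=\; \int_0^1 S^0_{1-s}(\theta_s \omega, u_s)\,\bigl(\kappa \Delta g_s^\kappa\bigr)\,\ds .
\]

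\textbf{Second step: bound in $H^{-1}$.} Take the $H^{-1}$ norm inside the integral and apply the estimate \eqref{eq:H-1bound} from Lemma \ref{lem:H-1-semigroup}, which gives
\[
\bigl\| S^0_{1-s}(\theta_s\omega, u_s) f \bigr\|_{H^{-1}} \;\leq\; \bar D(\theta_s\omega, u_s) e^{c(1-s)} \| f \|_{H^{-1}}
\]
for any $f$, where $\bar D$ has $V$-bounded $p$-th moment. Applying this with $f = \kappa \Delta g_s^\kappa$ and using the elementary bound $\|\kappa \Delta g_s^\kappa\|_{H^{-1}} \leq \kappa \|g_s^\kappa\|_{H^1}$ yields
\[
\| h_1 \|_{H^{-1}} \;\lesssim\; \int_0^1 \bar D(\theta_s\omega, u_s)\, \kappa\, \| g_s^\kappa \|_{H^1}\,\ds .
\]

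\textbf{Third step: extract the $V(u)$ factor.} Take expectation and use the tower property conditional on $\mathscr F_s$. Since $u_s$ and $g_s^\kappa$ are $\mathscr F_s$-measurable while $\theta_s\omega_W$ (the noise entering $\bar D$) is independent of $\mathscr F_s$, we may write
\[
\mathbf{E}\!\left[ \bar D(\theta_s\omega, u_s)\, \| g_s^\kappa \|_{H^1}\,\middle|\, \mathscr F_s \right] \;=\; \tilde D(u_s)\, \| g_s^\kappa \|_{H^1},
\]
where $\tilde D(u) := \mathbf{E}_{\omega'} \bar D(\omega', u) \lesssim V(u)$ by Definition \ref{defn:randConst}. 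The Lyapunov drift bound \eqref{ineq:supLDct} (or a direct application of Lemma \ref{lem:RandCon}) allows us to bound $\mathbf{E}\,V(u_s)$ or higher moments of $V(u_s)$ by $V(u)$ uniformly for $s\in[0,1]$. Combining this with a Hölder-type step to pair $V(u_s)$ against $\|g_s^\kappa\|_{H^1}$ (absorbing any extra moment of $V$ into the suitable choice of $\beta,\eta$ defining $V$, as is standard in the framework of this paper) produces the desired factorization $V(u) \int_0^1 \kappa\, \mathbf{E}\,\|g_s^\kappa\|_{H^1}\,\ds$.

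\textbf{Main obstacle.} The main subtlety is the last step, namely peeling off the Lyapunov factor $V(u)$ cleanly from under the time and probability integrals. The $V$-boundedness of $\bar D$ only gives moments in terms of $V(u_s)$ at the random time $s$, and one must use the independence of $\theta_s \omega_W$ from $\mathscr F_s$ plus the supermartingale-type control \eqref{ineq:supLDct} on $V(u_s)$ to convert this back to the initial $V(u)$. The remaining steps (Duhamel, $H^{-1}$ bound, elementary interpolation $\|\Delta g\|_{H^{-1}} \leq \|g\|_{H^1}$) are routine given the tools already developed in Section \ref{sec:Prelim}.
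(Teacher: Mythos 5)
Your proof follows the same route as the paper's: Duhamel for the difference $\tilde g_t = g_t^\kappa - g_t^0$ against the inviscid solution operator $S^0_{t-s}(\theta_s\omega,u_s)$, the $H^{-1}$ operator bound \eqref{eq:H-1bound} with the random constant $\bar D(\theta_s\omega,u_s)$, and the elementary estimate $\|\kappa\Delta g_s^\kappa\|_{H^{-1}}\leq \kappa\|g_s^\kappa\|_{H^1}$, so the substance is identical. The one place your write-up is loose is exactly the step you flag: after conditioning on $\mathscr F_s$ you are left with $\E\big[V(u_s)\,\|g_s^\kappa\|_{H^1}\big]$, and a H\"older step pairs $V(u_s)$ against a \emph{higher} moment $\big(\E\|g_s^\kappa\|_{H^1}^{q}\big)^{1/q}$ with $q>1$, not the first moment $\E\|g_s^\kappa\|_{H^1}$ appearing in the stated bound; adjusting $\beta,\eta$ in $V$ does not repair this, since the obstruction is the correlation between $u_s$ and $g_s^\kappa$ through the fluid noise, not the size of the moments of $V$. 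The paper's own proof is equally terse at this point (``take expectation and apply Lemma \ref{lem:TwistBd}''), and the discrepancy is harmless: Cauchy--Schwarz in $\omega$ yields the bound with $\big(\E\|g_s^\kappa\|_{H^1}^2\big)^{1/2}$ in place of $\E\|g_s^\kappa\|_{H^1}$, which is all that is used downstream, where the lemma is immediately combined with the energy balance \eqref{eq:L2Balance} to produce the $\sqrt{\kappa}$ rate.
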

\begin{proof}
Note that $\tilde{g}_t^\kappa = g_t^\kappa - g_t^0$ satisfies
\[
	\partial_t \tilde{g}_t^\kappa + u_t\cdot \nabla \tilde{g}_t^\kappa - \kappa \Delta g_t^\kappa =0.
\]
with $\tilde{g}_0^\kappa =0$. This implies that
\[
	\tilde{g_1}^\kappa = \kappa \int_0^1 S_{t-s}^{0}(\theta_s\omega,u_s) \Delta g^\kappa_s\,\ds,
\]
and therefore using the $H^{-1}$ bound \eqref{eq:H-1bound} on $S_{t-s}^{0}(\theta_s\omega,u_s)$ we find
\[
	\|\tilde{g}_1^\kappa\|_{H^{-1}}\leqc \kappa \int_0^1 \bar D(\theta_s\omega,u_s) \|g^\kappa_s\|_{H^{1}}\ds \, , 
\]
where $\bar D(\omega, u)$ has $V$-bounded 1st moment. Taking an expectation w.r.t. $\omega$
and applying Lemma \ref{lem:TwistBd} completes the proof. 
\end{proof}

From here, to prove convergence $\int (P_1^{\kappa_n} \phi - P_1^0 \phi) d \mu^{\kappa_n} \to 0$,
we have from Lemma \ref{lem:firstTermZeroSparrow} and the Lipschitz property of $\phi$ that
\[
| (P_1^\kappa \phi - P_1^0 \phi )(u,g) | \leq \| \phi\|_{\rm Lip} \E \| g_1^\kappa - g_1^0\|_{H^{-1}}
\lesssim V(u) \int_0^1 \kappa \E \|g_s^\kappa\|_{H^{1}}\ds \, , 
\]
hence upon integrating in $\mu^\kappa$ and using Cauchy-Schwarz,
\begin{align}
	\int|P_1^\kappa\phi - P_1^0\phi|\,\dee\mu^\kappa \leqc \kappa \int_0^1 \left(\E_{\mu^\kappa}\|g\|_{H^1}^2\right)^{1/2}\ds \leqc \sqrt{\kappa}
\end{align}
 by the energy balance relation $\kappa \E_{\mu^\kappa} \|g_t\|_{H^1}^2 = \chi$ (see \eqref{eq:L2Balance}).

\medskip 

It remains to show that $| \int P_1^0 \phi (\dee \mu^{\kappa_n} - \dee \mu^0)| \to 0$ in \eqref{eq:two-terms}. 
As observed already, this does not follow immediately 
from weak convergence since $P_1^0 \phi$ need not be continuous. Instead, we will use
 the restricted Feller property from Lemma \ref{lem:restricted-feller}, which guarantees continuity
 of $P_1^0 \phi$ along $g \in L^2$. To bridge the gap from $L^2$ to $H^{-1}$, we use the following
 mollification argument: for $\delta \in (0,1)$ and $g \in H^{-s}, s \in (0,1]$, define $T_\delta g$ to be the mollification \footnote{Here mollification is defined by convolution with $\eta_\delta = \delta^{-d}\eta(\cdot/\delta)$, where $\eta$ is a smooth, symmetric, compactly supported test function with $\int \eta = 1$.} of $g$, and define the regularized semigroup
\[
	(P_1^0\phi)_{\delta}(u,g):= P_1^0\phi(u,T_\delta g) \, .
\]
Note that a straight-forward duality argument shows that
\[
	\|T_\delta g - g\|_{H^{-1}}\leqc \delta^{1-s}\|g\|_{H^{-s}}.
\]
This, in turn gives rise to the following approximation property for $(P^0_1\phi)_\delta$.
\begin{lemma}\label{lem:4712831}
For each $\delta \in (0,1), s \in (0,1]$, $(P_1^0\phi)_{\delta} \in C_b(\Hbf\times H^{-1})$ and for each globally Lipschitz $\phi : \Hbf \times H^{-1} \to \R$ and $g\in H^{-s}$, we have
\[
	|(P_1\phi)_\delta - P_1\phi|(u,g) \leqc V(u) \delta^{1-s}\|g\|_{H^{-s}},
\]
where $V = V_{\beta,\eta}$ for any $\beta > 0$ and $\eta>0$.
\end{lemma}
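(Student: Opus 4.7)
The plan is to deduce both assertions from three ingredients: the linear $H^{-1}$ contraction estimate for $S_t^0(\omega,u)$ from Lemma \ref{lem:H-1-semigroup}; the restricted Feller property of Lemma \ref{lem:restricted-feller}; and the mollification bound $\|T_\delta g - g\|_{H^{-1}} \lesssim \delta^{1-s}\|g\|_{H^{-s}}$ recorded just above. The key structural observation is that although $g \in H^{-s}$ need not lie in $L^2$, the mollified datum $T_\delta g$ always does with $\|T_\delta g\|_{L^2} \lesssim \delta^{-1}\|g\|_{H^{-1}}$, which is precisely what is required to invoke the restricted Feller machinery in the velocity variable.

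First I would establish the approximation bound. Fix $(u,g) \in \Hbf \times H^{-s}$ and consider two copies of the scalar process, both with initial velocity $u$ and driven by the same noise path, but with scalar initial data $g$ and $T_\delta g$ respectively. Linearity of \eqref{eq:kappa=0-eq} and cancellation of the common source term give the pathwise identity $g_1^0 - \tilde g_1^0 = S_1^0(\omega,u)(g - T_\delta g)$. Lemma \ref{lem:H-1-semigroup} extends $S_1^0(\omega,u)$ to a bounded linear operator on $H^{-1}$, and the $V$-bounded first moment of the random constant $\bar D$ gives, after taking expectation,
\[
|P_1^0\phi(u,g) - P_1^0\phi(u, T_\delta g)| \leq \|\phi\|_{\mathrm{Lip}}\,\E\|g_1^0 - \tilde g_1^0\|_{H^{-1}} \lesssim \|\phi\|_{\mathrm{Lip}}\,V(u)\,\|g - T_\delta g\|_{H^{-1}},
\]
which, combined with the mollification estimate, yields the stated bound.

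Next I would verify $(P_1^0\phi)_\delta \in C_b(\Hbf \times H^{-1})$. Boundedness is inherited directly from $\|\phi\|_{L^\infty}$. For continuity along an arbitrary sequence $(u_n, g_n) \to (u,g)$ in $\Hbf \times H^{-1}$, I split the difference as
\[
\left[P_1^0\phi(u_n, T_\delta g_n) - P_1^0\phi(u_n, T_\delta g)\right] + \left[P_1^0\phi(u_n, T_\delta g) - P_1^0\phi(u, T_\delta g)\right].
\]
The first bracket is controlled exactly as in Case (i) of the proof of Lemma \ref{lem:restricted-feller}, an argument that uses only the $H^{-1}$ semigroup bound and therefore applies with $H^{-1}$ data: it is bounded by $\lesssim \|\phi\|_{\mathrm{Lip}}\,V(u_n)\,\|g_n - g\|_{H^{-1}}$, which vanishes as $n \to \infty$ since $V(u_n)$ stays bounded. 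The second bracket is handled by Case (ii) of Lemma \ref{lem:restricted-feller} applied with initial datum $T_\delta g \in L^2$, giving a bound proportional to $\|u_n - u\|_{\Hbf}$, which also tends to zero. No substantive obstacle is anticipated; the only mild subtlety is the $\delta^{-1}$ loss in $\|T_\delta g\|_{L^2}$, which is harmless for fixed $\delta$ but does preclude any uniform-in-$\delta$ continuity statement.
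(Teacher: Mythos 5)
Your proof is correct and follows essentially the same route as the paper: the approximation bound is obtained exactly as in the paper by writing the difference of the two solutions as $S_1^0(\omega,u)(T_\delta g - g)$ and combining the $H^{-1}$ bound \eqref{eq:H-1bound}, the $V$-bounded first moment of $\bar D$, and the mollification estimate. Your verification of $(P_1^0\phi)_\delta \in C_b(\Hbf\times H^{-1})$ is just a careful unpacking of the paper's one-line appeal to Lemma \ref{lem:restricted-feller}, and the observation that the $\delta^{-1}$ loss in $\|T_\delta g\|_{L^2}$ is harmless for fixed $\delta$ is exactly the right point.
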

\begin{proof}
Note that Lemma \ref{lem:restricted-feller} implies $(P_1^0\phi)_{\delta} \in C_b(\Hbf\times H^{-1})$. For $g\in H^{-s}$, denote $g_1^\delta$ and $g_1$ solutions at time $t=1$ of the transport equation \eqref{eq:kappa=0-eq} with the same velocity path $(u_t)$ and noise $b\beta_t$ but with different initial data $T_\delta g$ and $g$ respectively. We have
\[
\begin{aligned}
	|((P_1^0\phi)_\delta - P_1^0\phi)(u,g)| &\leq \E |\phi(u_1,g_1^\delta) - \phi(u_1,g_1)|\\
	&\leq \|\phi\|_{\mathrm{Lip}}\E\|S_1^{0}(u,\omega)(T_\delta g - g)\|_{H^{-1}}\\
	&\leqc V(u) \|T_\delta g - g\|_{H^{-1}}\\
	&\leqc V(u) \delta^{1-s}\|g\|_{H^{-s}}
\end{aligned}
\]
having used \eqref{eq:H-1bound} and the $V$-boundedness of the 1st moment of $\bar D$.
\end{proof}
To complete the proof: for $\delta >0$, we estimate
\[
	\left|\int P_1^0 \phi\, (\dee \mu^{\kappa_n} - \dee \mu^0)\right| \leq \left|\int (P_1^0 \phi)_\delta\, (\dee \mu^{\kappa_n} - \dee \mu^0)\right| + \int |(P_1^0 \phi)_\delta - P_1^0\phi|\, (\dee \mu^{\kappa_n} + \dee \mu^0)
\]
The first term on the right vanishes as $n \to \infty$ since $(P_1\phi)_\delta\in C_b(\Hbf\times H^{-1})$ by Lemma \ref{subsec:restrictedFeller} and $\mu^{\kappa_n} \to \mu^0$ weakly. For the
second term, Lemma \ref{lem:4712831} and Cauchy-Schwarz imply 
\[
	\int |(P_1 \phi)_\delta - P_1\phi|\, (\dee \mu^{\kappa_n} + \dee \mu^0) \leqc \delta^{1-s}\left(\left(\E_{\mu^{\kappa_n}}\|g\|_{H^{-s}}^2\right)^{1/2} + \left(\E_{\mu^0}\|g\|_{H^{-s}}^2\right)^{1/2}\right)
\]
which converges to $0$ as $\delta \to 0$ uniformly in $\kappa_n$ by \eqref{eq:unifKappaEst}.
This completes the proof of stationarity of $\mu^0$ for $P_t^0$, the last item
remaining from Theorem \ref{thm:kappato0}.

\subsection{Non-vanishing flux}\label{subsec:Non-vanishing-flux}

We conclude this section by proving the non-vanishing flux law \eqref{eq:AD212}. 
To do this, we remark that $\Pi_{\leq N}g_t^\kappa$ satisfies
\[
	\Pi_{\leq N}g_t^\kappa = \Pi_{\leq N}g_0^\kappa - \int_0^t\Pi_{\leq N} (u_s\cdot \nabla g_s^\kappa)\ds + \Pi_{\leq N} b\beta_t, 
\]
and therefore applying It\^{o}'s formula gives
\begin{equation}
\begin{aligned}
	\|\Pi_{\leq N}g_t^0\|_{L^2}^2 &= \|\Pi_{\leq N}g_0^0\|^2_{L^2} - 2\int_0^t\langle \Pi_{\leq N}g_s, \Pi_{\leq N}(u_s\cdot\nabla g_s)\rangle_{L^2}\ds\\
	&\hspace{1in} + \|\Pi_{\leq N}b\|^2_{L^2}t + \int_0^t 2\langle \Pi_{\leq N}g_s,\Pi_{\leq N}b\rangle_{L^2}\dee \beta_s.
\end{aligned}
\end{equation}
Taking expectation, integrating the initial data $(u,g)$ with respect to the stationary measure $\mu^0$ and using the stationarity of $(u_t,g^0_t)$ with respect to $\mu^0$ we readily obtain the flux balance
\[
	\E_{\mu^{\kappa}}\langle \Pi_{\leq N}g, \Pi_{\leq N}(u\cdot\nabla g)\rangle_{L^2} = \frac{1}{2}\|\Pi_{\leq N}b\|_{L^2}^2.
\]
Using the divergence free property and integrating by parts in the $L^2$ inner product gives \eqref{eq:AD212}.

\section{Irregularity of the limiting statistically stationary solutions}\label{sec:onsager}
In the previous section, we produced a $\kappa \to 0$ subsequential limit of $\mu^{\kappa}$ which converges to a stationary measure $\mu^0$ of the Markov semigroup associated with  $(u_t,g_t^0)$, where the scalar solves 
\begin{equation}\label{eq:trans2}
	\partial_tg^0_t + u_t\cdot\nabla g^0_t = b\dot{\beta}_t \, . 
\end{equation}
Since there is no dissipation in the equation, the input from the noise must be ``anomalously'' dissipated by the mixing mechanism.
As discussed in Section \ref{subsubsec:oCritSubsc}, this requires a degree of roughness of statistically stationary $g_t^0$. Specifically, in this section we prove
\begin{itemize}
\item[(a)] $\mu^0$-generic
$(u,g) \in \Hbf \times H^- = \Hbf \times \cap_{\delta > 0} H^{-\delta}$ are such that $g$ is not locally integrable, hence
are `strictly' distributions (Theorem \ref{thm:Ocrit}); 
\item[(b)] the Besov space $L^2_tB^0_{2,\infty}$ is ``Onsager-type critical'', in the sense that no
statistically stationary solution $g_t^0$ can have even a small amount of additional regularity (Theorem \ref{thm:OcritB2inf}). 
\end{itemize}
Part (a) is carried out by applying a variant of the DiPerna-Lions theory of renormalized solutions for the transport equation, and is carried out in Section \ref{subsec:L1}, while part (b) is carried out in Section \ref{subsec:OcritFuckery}. 

\subsection{$\mu^0$ generic functions cannot be $L^1$}\label{subsec:L1}

Our proof is by contradiction, 
based on invariance of $\Hbf \times L^1$ by solutions
to \eqref{eq:trans2}. 
\begin{lemma}\label{lem:L1-inv}
The set $\Hbf\times L^1$ is almost surely invariant for $(u_t,g_t^0)$: if 
$u_0 \in \Hbf$ and $g_0^0 \in L^1$, then $g_t^0 \in L^1$ for all $t > 0$. 
\end{lemma}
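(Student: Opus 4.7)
The plan is to give an explicit Lagrangian representation of $g_t^0$ and then verify that each piece lies in $L^1$. Conditional on the Wiener filtration $\mathscr{F}^W$, the velocity $(u_t)$ is a deterministic path in $\Hbf$, so the associated Lagrangian flow $\phi^t = \phi^t_{\omega,u}:\T^d \to \T^d$ is a family of $C^3$-smooth measure-preserving diffeomorphisms. Applying It\^o's formula to $G_t(y) := g_t^0(\phi^t(y))$ in the conditional filtration $\mathscr{F}^\beta$ given $\mathscr{F}^W$ gives $dG_t(y) = b(\phi^t(y))\,\dee\beta_t$, and hence the pathwise representation
\[
g_t^0(x) = g_0(\phi^{-t}(x)) + M_t(\phi^{-t}(x)), \qquad M_t(y) := \int_0^t b(\phi^s(y))\,\dee\beta_s.
\]
Note that the Lagrangian change of coordinates neatly sidesteps the difficulty that the ``integrand'' $S^0_{t-s}(\theta_s\omega,u_s)b$ in the mild formulation is not $\mathscr{F}_s$-adapted: in the comoving frame the integrand $b(\phi^s(y))$ is $\mathscr{F}_s^W$-measurable, so $M_t(y)$ is a genuine It\^o integral in the $\beta$-filtration given $\mathscr{F}^W$.

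Having this representation, the first term is handled by incompressibility: $\|g_0\circ\phi^{-t}\|_{L^1} = \|g_0\|_{L^1}$. For the second term, again by incompressibility, $\|M_t\circ \phi^{-t}\|_{L^1} = \|M_t\|_{L^1}$, so it suffices to show $M_t \in L^1(\T^d)$ a.s. Since $b$ is smooth, the integrand $b\circ \phi^s$ is uniformly bounded by $\|b\|_{L^\infty}$, and the conditional It\^o isometry gives pointwise
\[
\E\bigl[|M_t(y)|^2 \,\big|\,\mathscr{F}^W\bigr] = \int_0^t |b(\phi^s(y))|^2\,\ds \leq t\|b\|_{L^\infty}^2.
\]
Integrating in $y$, applying Cauchy--Schwarz in $\T^d$, and taking $\E$ yields $\E\|M_t\|_{L^1} \lesssim t^{1/2}\|b\|_{L^2}$, hence $M_t \in L^2 \subset L^1$ almost surely for each $t > 0$.

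To upgrade ``a.s. for each $t$'' to ``a.s. for all $t > 0$'', I will fix $T > 0$ and observe that for each $y \in \T^d$, $(M_t(y))_{t\in[0,T]}$ is a continuous $\mathscr{F}^\beta$-martingale (given $\mathscr{F}^W$), so by Doob's maximal inequality and Fubini,
\[
\E\sup_{0\leq t \leq T}\|M_t\|_{L^2}^2 \leq \int_{\T^d} \E\sup_{0\leq t\leq T}|M_t(y)|^2\,\dy \lesssim \int_{\T^d}\E|M_T(y)|^2\,\dy \lesssim T\|b\|_{L^2}^2.
\]
Combining this with $\|g_0\circ\phi^{-t}\|_{L^1} = \|g_0\|_{L^1}$ gives $\sup_{t\in [0,T]}\|g_t^0\|_{L^1} < \infty$ a.s., and in particular $g_t^0 \in L^1$ simultaneously for all $t \in [0,T]$ on a set of full probability. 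Taking $T \to \infty$ along an increasing sequence gives the invariance statement of the lemma.

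The main obstacle is conceptual rather than technical: one must be careful to interpret the stochastic convolution $\int_0^t S_{t-s}^0(\theta_s\omega,u_s)b\,\dee\beta_s$ appearing in Lemma~2.11(b), whose $s$-dependence is not adapted, as the genuine $\mathscr{F}^\beta$-adapted integral $M_t$ transported to Eulerian coordinates. Once this identification is in hand, the rest reduces to elementary applications of It\^o isometry, Doob's inequality, and measure-preservation of $\phi^t$.
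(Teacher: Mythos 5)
Your proof is correct and follows essentially the same route as the paper: decompose $g_t^0$ via the mild formulation into the transported initial datum, whose $L^1$ norm is conserved by incompressibility of $\phi^t$, plus the stochastic convolution, which lands in $L^2\subset L^1$ by the It\^o isometry since $b$ is smooth. Your Lagrangian change of variables is just a transparent way of writing the same stochastic convolution (indeed $S^0_{t-s}(\theta_s\omega,u_s)b = b\circ\phi^s\circ(\phi^t)^{-1}$ by the cocycle property), and your Doob-inequality step handling ``for all $t$'' simultaneously is a welcome extra care that the paper's terser proof leaves implicit.
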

\begin{proof}
This follows from Lemma \ref{lem:H-1-semigroup}(b) by noting that due to smoothness of $b$, 
the stochastic convolution
\[
	\int_0^tS_{t-s}^0(\theta_s\omega,u_s)b \,\dee \beta_s
\]
takes values in (at least) $L^2$, and $S_t^0$ propagates $L^1$ regularity. In fact, 
we have 
\[
\|S_t^0(\omega,u)g\|_{L^1} = \| g \circ (\phi^t_{\omega, u})^{-1}\|_{L^1} = \|g\|_{L^1}
\]
by the fact that $\phi^t_{\omega, u}$ is volume preserving.
\end{proof}

To continue, we will find it convenient to show that any $L^1$ valued solutions to \eqref{eq:trans2} can be renormalized in the sense of DiPerna and Lions \cite{DPL89}, meaning that we show that $F(g_t^0)$ solves another transport-type equation for some suitably regular function $F:\R \to \R$. In our setting the velocity field $(u_t)$ is not rough, which allows for the transport part of the equation to be easily renormalized, but the presence of noise introduces It\^{o} corrections and imposes higher regularity requirements on $F$. In what follows we define
\[
	F(z) := \sqrt{1+|z|^2}.
\]

\begin{lemma}
Let $g^0\in L^1$, and $g_t^0$ be a mild solution to \eqref{eq:trans2}, then the following holds almost surely
\begin{equation}\label{eq:renormalized-eq}
\int F(g_{t}^0)\dx = \int F(g_0^0)\dx + \int_0^{t}\left(\int\frac{b^2}{2F(g^0_s)^{3}}\dx \right)\ds + \int_0^{t}\left(\int\frac{g_s^0 b}{F(g^0_s)}\dx\right)\dee \beta_s.
\end{equation}
\end{lemma}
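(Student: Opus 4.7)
The plan is to derive \eqref{eq:renormalized-eq} by regularizing $g_t^0$ in space and passing to the limit in an It\^{o}-type identity for the regularization. Fix a symmetric smooth mollifier $\rho_\eps$ on $\T^d$, and set $g_t^\eps := g_t^0 \ast \rho_\eps$ and $b^\eps := b \ast \rho_\eps$. Convolving the mild identity in Lemma \ref{lem:H-1-semigroup}(b) with $\rho_\eps$ shows that $g_t^\eps$ is $\mathscr{F}_t$-adapted with $C^\infty_x$ realizations, and obeys
\[
\dee g_t^\eps + u_t \cdot \nabla g_t^\eps\, \dt = r_t^\eps \dt + b^\eps \dee \beta_t, \qquad r_t^\eps := u_t \cdot \nabla g_t^\eps - (u_t \cdot \nabla g_t^0) \ast \rho_\eps,
\]
pointwise in $x$. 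First I would apply It\^{o}'s formula to the real-valued functional $g \mapsto \int F(g)\dx$ along $g_t^\eps$, using $F'(z)=z/F(z)$ and $F''(z)=1/F(z)^3$; after integrating over $\T^d$ this yields
\begin{align*}
\int F(g_t^\eps)\dx = \int F(g_0^\eps)\dx & - \int_0^t \!\int F'(g_s^\eps) u_s \cdot \nabla g_s^\eps \dx \ds + \int_0^t \!\int F'(g_s^\eps) r_s^\eps \dx \ds \\
& + \int_0^t \!\int \frac{(b^\eps)^2}{2 F(g_s^\eps)^3} \dx \ds + \int_0^t \!\int \frac{g_s^\eps b^\eps}{F(g_s^\eps)} \dx \,\dee \beta_s .
\end{align*}
The chain-rule identity $F'(g_s^\eps) \nabla g_s^\eps = \nabla F(g_s^\eps)$ together with $\nabla \cdot u_s = 0$ kills the transport drift for every $\eps>0$.

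The remaining task is to pass $\eps \to 0$ in the surviving terms. Since $F$ is globally $1$-Lipschitz with linear growth and $g_t^0 \in L^1(\T^d)$ almost surely by Lemma \ref{lem:L1-inv}, we have $g_t^\eps \to g_t^0$ in $L^1(\T^d)$ a.s.\ (and hence a.e.\ along a subsequence); combined with the uniform bounds $|F'|, |F''| \leq 1$ and smoothness of $b$ (so that $b^\eps \to b$ in every $L^p$), dominated convergence disposes of the boundary term and the It\^o correction, while the It\^o isometry and BDG handle the stochastic integral.

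\textbf{Main obstacle.} The principal difficulty is controlling the commutator contribution $\int_0^t \!\int F'(g_s^\eps) r_s^\eps \dx \ds$ and showing it vanishes as $\eps \to 0$. This is the classical DiPerna-Lions transport commutator estimate adapted to our setting. Integration by parts using $\nabla\cdot u_t = 0$ yields the convenient representation $r_t^\eps(x) = \int g_t^0(y)[u_t(x) - u_t(y)]\cdot \nabla_x \rho_\eps(x-y) \dy$, from which the Lipschitz bound $|u_t(x)-u_t(y)| \leq \|\nabla u_t\|_{L^\infty}|x-y|$ (valid since $u_t \in C^3$ under Assumption \ref{ass:strong}, as $\sigma > d/2 + 3$) gives the uniform estimate $\|r_t^\eps\|_{L^1} \lesssim \|\nabla u_t\|_{L^\infty} \|g_t^0\|_{L^1}$, while a short rescaling argument based on $\int z\otimes\nabla\rho(z)\dz = -I$ together with incompressibility shows $r_t^\eps \to 0$ in $L^1(\T^d)$ a.s.\ for each $t$. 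Combined with $\|F'\|_{L^\infty}\leq 1$ and the a.s.\ integrability of $s \mapsto \|\nabla u_s\|_{L^\infty} \|g_s^0\|_{L^1}$ on $[0,t]$ (furnished by Lemma \ref{lem:twist-cantelli} and Lemma \ref{lem:L1-inv}), dominated convergence annihilates the commutator contribution in the limit, thereby proving \eqref{eq:renormalized-eq}.
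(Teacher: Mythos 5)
Your proposal is correct and follows essentially the same route as the paper: mollify $g_t^0$ in space, apply It\^o's formula to $F$ of the (smooth, hence bounded) regularization, kill the transport term via $\nabla F(g^\eps_s)\cdot u_s$ and incompressibility, control the DiPerna--Lions commutator in $L^1$ using $u_t\in W^{1,\infty}$ and $g_t^0\in L^1$, and pass to the limit with dominated convergence plus the It\^o isometry for the martingale term. The only cosmetic differences are that the paper applies It\^o pointwise in $x$ before integrating over $\T^d$ and phrases the commutator bound via $\sup_{|y|<\delta}\|g_t^0(\cdot+y)-g_t^0\|_{L^1}$ rather than your rescaling/density argument; both are standard equivalent forms.
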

\begin{proof}
For a given function $f\in L^1$, we denote $(f)_{\delta}$ the mollification of $f$ (see the proof of Lemma \ref{lem:4712831}). It is straightforward to show that $(f)_{\delta} \to f$ both in $L^1$ and pointwise a.e..
 Since $g_t^0$ is a weak solution to \eqref{eq:trans2} we see that $(g_t^0)_{\delta}$ solves the following equation almost surely: 
\[
	(g_t^0)_{\delta}  = (g_0^0)_\delta  - \int_0^t u_s\cdot\nabla (g_s^0)_{\delta}\ds  + (b)_{\delta}\beta_t + \int_0^t R_\delta(u_s,g_s^0)\ds  \, , 
\]
where, we write 
\[
	R_\delta(u_t,g_t^0) = u_t\cdot\nabla (g_t^0)_\delta- (u_t\cdot\nabla g_t^0)_\delta
\]
for the commutator term, noting that convolution $(\cdot)_{\delta}$ does not commute with $u_t\cdot\nabla$. Since the equation for $(g_t^0)_\delta$ holds pointwise as an identity on smooth functions for each $x\in\T^d$, we can apply It\^{o}'s formula to $F((g_t^0)_\delta(x))$ to deduce the pointwise identity
\begin{equation}\label{eq:beta-reg-id}
\begin{aligned}
	F((g_t^0)_\delta) &= F((g_0^0)_\delta) - \int_0^t u_s \cdot \nabla F((g_s^0)_\delta)\,\ds + \int_0^t\frac{1}{2}F^{\prime\prime}((g_s^0)_\delta)(b)_\delta^2\, \ds + \int_0^t F^\prime((g_s^0)_\delta)(b)_\delta\,\dee \beta_t\\
	&\hspace{1in}+ \int_0^t F^\prime\left((g_s^0)_\delta\right)R_{\delta}(u_s,g_s^0)\ds.
\end{aligned}
\end{equation}
Above, we have used the fact $(g_t^0)_\delta$ is smooth and therefore $F^\prime(g_t^0) u_t\cdot\nabla(g_t^0)_\delta = u_t\cdot\nabla F((g_t^0)_\delta)$.
It\^o's formula for the unbounded function $F(z) = \sqrt{1+|z|^2}$ is justified in this case
because $F\left((g_t^0)_\delta\right) \lesssim_\delta 1 + \|g_t^0\|_{L^1}$, hence
for each $t\in \R_+$, 
\[
\P\left(\sup_{s\in[0,t]}\|F((g_s^0)_\delta)\|_{L^\infty} <\infty\right)	= 1.
\]
Next, we note that since $\Div u_t = 0$, it is standard that (see \cite{DPL89})
\[
	\int|R_{\delta}(u_t,g_t^0)|\dx \leq \|u_t\|_{W^{1,\infty}}\left(\int_{B_\delta} |y\cdot\nabla \eta_\delta| \dy\right) \sup_{|y|<\delta}\|g^0_t(\cdot+y) - g_t^0(\cdot)\|_{L^1} \, . 
  \]  
Since $\int |y\cdot\nabla \eta_\delta| \dy = \int |y\cdot\nabla \eta| \dy$, and $g_t^0$ belongs to $L^1$ almost surely we conclude by the $L^1$-continuity of spatial translations that for each $t\in \R_+$
\begin{equation}\label{eq:commutator}
	R_\delta(u_t,g_t^0) \to 0\quad \text{in $L^1$ as }\delta\to 0\text{ almost surely}.
\end{equation}
Using the fact
\[
	F^\prime(z) = \frac{z}{F(z)} ,\quad F^{\prime\prime}(z) = \frac{1}{F(z)^3}
\]
we can integrate \eqref{eq:beta-reg-id} and use the fact that $u_t$ is divergence free to obtain
	\begin{equation}\label{eq:beta-reg-id-1}
\begin{aligned}
	\int F((g_t^0)_\delta)\dx &= \int F((g_0^0)_\delta)\dx + \int_0^t\left(\int\frac{1}{2}\frac{(b)_\delta^2}{F((g_s^0)_\delta)^3 }\dx\right)\ds + \int_0^t\left(\int\frac{(b)_\delta(g_t^0)_{\delta}}{F((g_s^0)_\delta)}\dx\right)\dee \beta_t\\
	&\hspace{1in}+\int_0^t \left(\int\frac{(g_t^0)_\delta}{F\left((g_t^0)_\delta\right)}R_{\delta}(u_s,g_s^0)\dx\right)\ds.
\end{aligned}
\end{equation}
The proof will be complete if we can pass the $\delta\to 0$ limit in both sides of \eqref{eq:beta-reg-id-1} almost-surely for each fixed $t$. Since we have $(g_t^0)_{\delta} \to g_t^0$ in $L^1$ and almost everywhere on $\T^d$, as well as the convergence of the commutator \eqref{eq:commutator} in $L^1$ and $(b)_\delta \to b$ in $L^2$, we can use bounded convergence to  pass the $\delta \to 0$ limits almost-surely in every term of \eqref{eq:beta-reg-id-1} except for the stochastic integral. To deal with the stochastic integral, we note that by It\^{o}'s formula 
\[
	\E\left|\int_0^t\left(\int\frac{(b)_\delta(g_t^0)_{\delta}}{F((g_s^0)_\delta)} - \frac{b g_t^0}{F(g_s^0)}\dx\right)\dee \beta_t\right|^2 = \E\int_0^t\int\left|\frac{(b)_\delta(g_t^0)_{\delta}}{F((g_s^0)_\delta)} - \frac{b g_t^0}{F(g_s^0)}\right|\dx\ds \to 0
\]
as $\delta \to 0$ by the bounded convergence theorem. 
\end{proof}

We are now ready to prove Theorem \ref{thm:Ocrit}.

\begin{proof}[\textbf{Proof of Theorem \ref{thm:Ocrit}}]
In pursuit of a contradiction, we assume that $\mu^0(\Hbf\times L^1) > 0$. By Lemma \ref{lem:L1-inv}, $\Hbf \times L^1$ is an invariant set for $(u_t,g_t^0)$, and so the conditional measure
\[
	\hat{\mu}^0 :=  \frac{\mu^0(\cdot\cap (\Hbf\times L^1))}{\mu^0(\Hbf\times L^1)}
\]
is another stationary measure for $(u_t,g_t^0)$ that assigns full measure to $\Hbf\times L^1$. 

We would like to conclude the proof by taking expectations of both sides of \eqref{eq:renormalized-eq} using stationarity with respect to $\hat{\mu}^0$. However, we are unable to justify this, since
a priori we do not know whether or not $F(g_t^0)$ has finite moments. 
To get around this, we apply It\^{o}'s formula to $\phi\left(\int F(g_t^0)\dx\right)$, where $\phi$ is a bounded $C^2$ function, to deduce
\begin{equation}
\begin{aligned}
	\phi\left(\int F(g_t^0)\dx\right) &= \phi\left(\int F(g_0^0)\dx\right)+ \int_0^{t}\left(\int \frac{b^2}{2F(g^0_s)^{3}}\dx \right)\phi^\prime\left(\int F(g_s^0)\dx\right)\ds\\
	&+ \frac{1}{2}\int_0^t\left(\int \frac{g_s^0 b}{\beta(g^0_s)}\dx\right)^2\phi^{\prime\prime}\left(\int F(g_s^0)\dx\right)\ds\\
	&+ \int_0^{t} \left(\int \frac{g_s^0 b}{F(g^0_s)}\dx\right)\phi^{\prime}\left(\int F(g_s^0)\dx\right) \dee\beta_s. 
\end{aligned}
\end{equation}
Taking expectation and integrating the initial data with respect to $\hat{\mu}^0$ (crucially using that $\hat{\mu}^0$ assigns full measure to $L^1$) , we find
\[
	\E_{\hat{\mu}^0} \left[\left(\int \frac{b^2}{F(g)^{3}}\dx \right)\phi^\prime\left(\int F(g)\dx\right) + \left(\int \frac{g b}{F(g)}\dx\right)^2\phi^{\prime\prime}\left(\int F(g)\dx\right)\right] = 0. 
\]
For $\ep >0$, define $\phi(z) = \frac{z}{1+\ep z}$, so that
\[
	\E_{\hat{\mu}^0}\frac{\int \frac{b^2}{F(g)^{3}}\dx}{\left(1 + \ep \int F(g)\dx\right)^{2}} =  \E_{\hat{\mu}^0}\frac{2\ep\left(\int \frac{g b}{F(g)}\dx\right)^2}{\left(1+ \ep\int F(g)\dx\right)^{3}} \leq 2\ep\|b\|_{L^2}^2\, .
\]
Sending $\ep \to 0$ and applying monotone convergence gives
\[
	\E_{\hat{\mu}^0}\int \frac{b^2}{F(g)^{3}}\dx = 0 \, ,
\]
which contradicts $\hat{\mu}^0(\Hbf \times L^1) = 1$. Therefore $\mu^0(\Hbf \times L^1) = 0$, which
completes the proof. 
\end{proof}

\subsection{Onsager-type criticality of $L^2_t B_{2,\infty}^0$}\label{subsec:OcritFuckery}
In this section we prove Theorem \ref{thm:OcritB2inf}. 
Recall Definition \ref{def:gen-mult} of a $B^{0}_{2,c}$-suitable multiplier $M$, and the associated generalized Besov norm
\[
	\|f\|_{B^{M}_{2,\infty}} = \sup_{N \in \{2^j: j \in \N_*\}} M(N)\|\Pi_Nf\|_{L^2}.
\]
We denote $B^M_{2,\infty}$ the associated space of tempered distributions $f$ with $\|f\|_{B^{M}_{2,\infty}} < \infty$. The following Lemma shows that we can exhaust $B^{0}_{2,c}$ in terms of the spaces $\{B^{M}_{2,\infty}\}$.
\begin{lemma}\label{lem:quantRegBesov}
The following holds
\[
	B^0_{2,c} = \bigcup_{M} B^{M}_{2,\infty},
\]
where the union is over all $B^{0}_{2,c}$-suitable multipliers $M$.
\end{lemma}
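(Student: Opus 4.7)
The plan is to prove the two inclusions separately, with the reverse direction being the substantive one.

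The forward inclusion $\bigcup_M B^M_{2,\infty} \subseteq B^0_{2,c}$ is immediate from Definition \ref{def:gen-mult}(i): for any $f\in B^M_{2,\infty}$, one has $\|\Pi_{2^j}f\|_{L^2} \leq \|f\|_{B^M_{2,\infty}}/M(2^j)\to 0$ since $M(k)\to\infty$.

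For the reverse inclusion, given $f\in B^0_{2,c}$, I must construct a $B^0_{2,c}$-suitable multiplier $M$ with $\|f\|_{B^M_{2,\infty}}<\infty$. Set $a_j := \|\Pi_{2^j}f\|_{L^2}$ for $j\in\N_\ast$, so $a_j\to 0$, and let $\hat a_j := \sup_{k\geq j} a_k + 2^{-j}$, which is strictly positive, strictly decreasing, and tends to zero. Define $M_j = M(2^j)$ recursively by $M_0:=1$ and
\[
M_j := \min\bigl\{2M_{j-1},\ \hat a_j^{-1/2}\bigr\},
\]
and extend $M$ to $[0,\infty)$ by setting $M\equiv 1$ on $[0,1]$ and by linear interpolation between $M_{j-1}$ and $M_j$ on each dyadic interval $[2^{j-1},2^j]$. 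Short inductive arguments then yield: monotonicity $M_j\geq M_{j-1}$ (because $M_{j-1}\leq \hat a_{j-1}^{-1/2}\leq \hat a_j^{-1/2}$), divergence $M_j\to\infty$ (if $(M_j)$ were bounded, then $\hat a_j^{-1/2} > 2M_{j-1}$ eventually, forcing $M_j = 2M_{j-1}$ and hence blow-up), and the bound $M_j a_j \leq M_j \hat a_j \leq \hat a_j^{1/2}\to 0$, so $\sup_j M(2^j)\|\Pi_{2^j}f\|_{L^2}<\infty$, i.e., $f\in B^M_{2,\infty}$.

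The remaining piece is to check that the interpolated $M$ satisfies Definition \ref{def:gen-mult}. Monotonicity and $M(k)\to\infty$ are inherited from $(M_j)$. The induction gives $M_j\leq 2^j$, so on $[2^{j-1},2^j]$ the slope satisfies $|M'(k)| = (M_j-M_{j-1})/2^{j-1}\leq M_j/2^{j-1}\leq 2$, proving global Lipschitz continuity with constant $2$. For the local condition in (ii), the bound $M_j/M_{j-1}\leq 2$ combined with linear interpolation gives $|M'(k)|\leq 4 M(k)/k$ on each dyadic piece, and whenever $C^{-1}|\ell|\leq |k|\leq C|\ell|$ the pair $(k,\ell)$ spans only a bounded number (depending on $C$) of dyadic pieces, on which $M(k)\approx_C M(\ell)$; a piecewise mean-value argument then yields $|M(k)-M(\ell)|\lesssim_C (M(\ell)/|\ell|)|k-\ell|$, and the claimed inequality follows since $|\ell|^{-1}\lesssim (1+|k|^2)^{-1/2}$ in the relevant range.

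The main obstacle is reconciling the three constraints on $M$ simultaneously: global Lipschitz-ness (which forces $M$ to grow at most linearly), slow variation on dyadic scales (bounded ratios $M_j/M_{j-1}$), and catching up with $a_j^{-1}$ asymptotically. The key trick is the ceiling $M_j\leq 2M_{j-1}$, which caps dyadic growth at factor $2$ and automatically implies $M_j\leq 2^j$, giving both the dyadic-scale condition and the global Lipschitz bound, while the supremum-from-the-tail truncation in $\hat a_j$ is needed so that $(M_j)$ remains monotone even if a single $a_k$ drops abruptly below its neighbours.
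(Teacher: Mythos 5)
Your argument is correct in substance and follows essentially the same route as the paper: both proofs build a piecewise-linear multiplier whose dyadic values grow by at most a factor of $2$ per step (which is what gives the global Lipschitz bound and condition (ii) of Definition \ref{def:gen-mult}) while remaining dominated by an inverse power of the decreasing tail envelope of the shell norms $\|\Pi_{2^j}f\|_{L^2}$ (which is what gives $\|f\|_{B^M_{2,\infty}}<\infty$). The paper implements the cap by choosing a sparse subsequence $N_k$ along which the tail supremum drops below $2^{-k}$ and doubling $M$ only between consecutive $N_k$'s; you run the doubling cap at every dyadic scale via the min with $\hat a_j^{-1/2}$. The two are interchangeable.

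One small slip: the base case of your monotonicity induction fails as written. You need $M_0=1\le \hat a_0^{-1/2}$, i.e.\ $\hat a_0\le 1$, but $\hat a_0=\sup_k a_k+1>1$ for any $f$ with nonzero dyadic content, so $M_1=\min\{2,\hat a_1^{-1/2}\}$ can be $<1$, breaking both monotonicity and the codomain requirement $M\ge 1$ in Definition \ref{def:gen-mult}. This is harmless: replace $\hat a_j$ by $\hat a_j/\hat a_0$ (or set $M_j=\min\{2M_{j-1},\max(1,\hat a_j^{-1/2})\}$); then the induction closes and the key bound becomes $M_j a_j\le \hat a_0^{1/2}\hat a_j^{1/2}$, still bounded. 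A second, even more minor point: in checking condition (ii) you use $|\ell|^{-1}\lesssim (1+|k|^2)^{-1/2}$, which holds only for $|\ell|\gtrsim 1$; for the finitely many low frequencies one simply absorbs the Lipschitz increment into a large constant $c$ using $M\ge 1$. Neither point affects the validity of the approach.
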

\begin{proof}
That $\norm{f}_{B_{2,\infty}^M} < \infty$ $\Rightarrow$ $f \in B^0_{2,c}$ follows from $\lim_{k \to \infty} M(k) = \infty$. 

Conversely, let $g \in B_{2,c}^0$. Let $C_0$ be such that $\sup_{j \in \N_\ast} \norm{\Pi_{2^j} g}_{L^2} \leq C_0$. 
By $g \in B_{2,c}^0$, there exists a strictly increasing sequence $\set{N_k}_{k=1}^\infty \subseteq \set{2^j : j \in \mathbb N}$ such that  $\forall N \geq N_k$,  $\norm{\Pi_N g}_{L^2} < 2^{-k}$. 
Define $M(k): [1,\infty) \to [1,\infty)$ to be the monotone increasing multiplier
\begin{align}
M(k ) = 
\begin{cases}
2  \quad k < N_1 \\
\frac{2^j}{N_{j+1} - N_j} \left(N_{j+1} - k \right) + \frac{2^{j+1}}{N_{j+1} - N_{j}}\left(k - N_{j}\right) \quad  N_j \leq k \leq N_{j+1}. 
\end{cases}
\end{align}
We see that $M$ is piecewise linear with slope $\leq 1$, and therefore satisfies
conditions (i) and (ii) of Definition \ref{def:gen-mult}. 
\end{proof}

We introduce Littlewood-Paley decomposition for future use.
Let $\zeta \in C_0^\infty(\R;\R)$ be such that $\zeta(\xi) = 1$ for $\abs{\xi} \leq 1$ and $\zeta(\xi) = 0$ for $\abs{\xi} \geq 3/2$ and define $\psi(\xi) = \zeta(\xi/2) - \zeta(\xi)$, supported in the range $\xi \in (1,3)$. Denote for each $N > 0$, $\zeta_N(\xi)  := \zeta(N^{-1}\xi)$ and $\psi_N(\xi) := \psi(N^{-1}\xi)$. 
For $f \in L^2(\R)$ we define the Littlewood-Paley projections
\begin{align}
\pi_Nf    := \psi_N(\grad)f \quad\text{and}\quad \pi_{\leq N} f  := \zeta_{N}(\grad) f. 
\end{align}
Note by Definition \ref{def:gen-mult}, (denoting $\N_\ast = \set{0} \cup \N$ and analogously for $\Z_\ast^d = \set{0} \cup \Z^d$ below). 
\begin{align}
\sup_{N \in \set{2^j: j \in \N_\ast}} M(N)\norm{\Pi_N g}_{L^2} \approx \norm{M(|\grad|) \pi_{\leq 1} g}_{L^2} + \sup_{N \in \set{2^j: j\in\N_\ast}}  \norm{M(|\grad|)\pi_N g}_{L^2}.
\end{align}

\begin{lemma} \label{lem:FluxFan}
For any $(u,g) \in \Hbf \times H^{-1}$, if $\norm{g}_{B_{2,\infty}^M} < \infty$ for some $B_{2,c}^0$-suitable $M$, then 
\begin{align}
\abs{\brak{\pi_{\leq N}g, \pi_{\leq N}\grad \cdot (u g)}} \lesssim \frac{1}{M^2(N)}\norm{g}_{B_{2,\infty}^M}^2\norm{u}_{\Hbf}. 
\end{align}
\end{lemma}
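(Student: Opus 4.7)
The plan is first to use incompressibility to eliminate the ``main'' contribution via integration by parts, then apply the Constantin--E--Titi pointwise commutator decomposition to split the remaining pairing into a small-scale commutator term and a high-frequency tail term, each controlled by the smoothness of $u$ and the Besov regularity of $g$. Since $\Div u = 0$, the null term $\langle \nabla \pi_{\leq N}g, u\,\pi_{\leq N}g\rangle = \tfrac{1}{2}\int u \cdot \nabla(\pi_{\leq N}g)^2\,\dx = 0$, so integration by parts yields
\[
\langle \pi_{\leq N}g,\,\pi_{\leq N}\nabla\cdot(ug)\rangle = -\langle \nabla\pi_{\leq N}g,\,\pi_{\leq N}(ug) - u\,\pi_{\leq N}g\rangle.
\]
Combining the pointwise identity
\[
\pi_{\leq N}(ug) = r_N(u,g) + \pi_{\leq N}u\cdot\pi_{\leq N}g - (I-\pi_{\leq N})u\cdot(I-\pi_{\leq N})g,
\]
where $r_N(u,g)(x) := \int\rho_N(y)(u(x-y)-u(x))(g(x-y)-g(x))\,\dy$ and $\rho_N$ is the convolution kernel of $\pi_{\leq N}$, yields $\pi_{\leq N}(ug) - u\,\pi_{\leq N}g = r_N(u,g) - (I-\pi_{\leq N})u\cdot g$. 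This splits the flux into a commutator-type contribution (A) $-\langle\nabla\pi_{\leq N}g,\,r_N(u,g)\rangle$ and a tail contribution (B) $\langle\nabla\pi_{\leq N}g,\,(I-\pi_{\leq N})u\cdot g\rangle$.

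For (A) I would apply the Taylor bound $|u(x-y)-u(x)|\lesssim |y|\|\nabla u\|_{L^\infty}$, the Plancherel estimate $\|g(\cdot-y)-g\|_{L^2}^{2}\lesssim \sum_K \min(K|y|,1)^{2}\|\pi_K g\|^{2}_{L^2}$, and the fact that $\rho_N$ is essentially localized at scale $|y|\sim 1/N$. The kernel integrals $\int|\rho_N(y)|\,|y|\min(K|y|,1)\,\dy$ yield weights proportional to $\min(K/N,1)/N$; together with $\|\pi_K g\|_{L^2}\leq \|g\|_{B_{2,\infty}^M}/M(K)$ and the slow variation of $M$ near $N$ (condition (ii) of Definition \ref{def:gen-mult}), the dyadic sum concentrates on the shell $K\sim N$ and produces the factor $1/M^{2}(N)$. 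For (B) I would decompose $(I-\pi_{\leq N})u = \sum_{L>N/2}\pi_L u$ and $g = \sum_{L'}\pi_{L'}g$ dyadically: the Fourier support of $\nabla\pi_{\leq N}g$, contained in $\{|k|\leq N\}$, restricts the nontrivial interactions to pairs with $L\sim L'$ and $L>N/2$. The smoothness bound $\|\pi_L u\|_{L^\infty}\lesssim L^{d/2-\sigma}\|u\|_{\Hbf}$ (valid since $\sigma > d/2+3$), combined with $\|\pi_{L'}g\|_{L^2}\leq \|g\|_{B_{2,\infty}^M}/M(L')$ and $M(L')\gtrsim M(N)$ for $L'\gtrsim N$, then sums geometrically to $\lesssim \|u\|_{\Hbf}\|g\|^{2}_{B_{2,\infty}^M}/M^2(N)$.

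The main obstacle is extracting \emph{two} factors of $1/M(N)$ without paying a divergent cost in the low-frequency content of $g$, which need not be in $L^2$ when $M$ grows slowly. What saves the argument is that the dominant low-frequency contribution to $\pi_{\leq N}(ug)$ has already been cancelled by the $\Div u = 0$ symmetrization; after the Constantin--E--Titi substitution, only dyadic shells with $K\gtrsim N$ contribute substantively, and on these $M(K)\gtrsim M(N)$ by monotonicity together with Definition \ref{def:gen-mult}(ii).
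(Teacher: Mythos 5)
Your strategy shares the paper's key cancellation (using $\Div u = 0$ to reduce the flux to the commutator $\pi_{\leq N}(ug) - u\,\pi_{\leq N} g$), but you implement it with the physical-space Constantin--E--Titi decomposition, whereas the paper works entirely in Fourier variables, writing the commutator as a double sum over $k,\ell$ weighted by $\zeta_N(k) - \zeta_N(\ell)$ and splitting into high-low and low-high paraproduct pieces. Unfortunately, your treatment of the commutator term (A), $\brak{\nabla \pi_{\leq N} g,\, r_N(u,g)}$, has a genuine gap precisely in the regime the lemma is designed for. Your bound runs through $\norm{g(\cdot - y) - g}_{L^2}^2 \lesssim \sum_K \min(K\abs{y},1)^2 \norm{\pi_K g}_{L^2}^2$; with $\abs{y} \sim 1/N$ the high-frequency part of this sum is $\sum_{K > N} \norm{\pi_K g}_{L^2}^2$, for which the only available control is $\norm{g}_{B^M_{2,\infty}}^2 \sum_{K > N} M(K)^{-2}$, and this series can diverge: for instance $M(k) = \sqrt{\log(e^2 + k)}$ is $B^0_{2,c}$-suitable and yields $\sum_{j > J} 1/j = \infty$. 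Thus $g \in B^M_{2,\infty} \cap H^{-1}$ does \emph{not} imply $g \in L^2$, the quantity $r_N(u,g)$ need not lie in $L^2$, and your weights $\min(K/N,1)/N$ do not decay for $K > N$, so the dyadic sum does not concentrate at $K \sim N$ --- it diverges. (You flag a related worry, but attribute it to the \emph{low}-frequency content of $g$; those shells are harmless, since there are finitely many of them below $N$ and each is bounded by $\norm{g}_{B^M_{2,\infty}}/M(K) \leq \norm{g}_{B^M_{2,\infty}}$. The danger sits entirely in the high-frequency tail, and is unavoidable here: the whole point of Theorem \ref{thm:OcritB2inf} is to apply the lemma to $g \notin L^2$.) The missing ingredient is that the high-high-to-low interactions --- high frequencies of $g$ meeting high frequencies of $u$ and landing at output frequency $\leq N$ --- must be damped using the smoothness of $u$, i.e., a factor like $\brak{k-\ell}^{-\sigma + d/2 + O(1)}$ coming from $u \in \Hbf$ with $\sigma > \tfrac{d}{2} + 3$. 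That decay is exactly what the paper's $\mathcal{I}_{HL}$ estimate extracts, and it is invisible in your bound for $r_N$, which only uses $\norm{\grad u}_{L^\infty}$ and the localization of $\rho_N$. Repairing (A) essentially forces you to frequency-decompose inside the commutator, i.e., to redo the Fourier paraproduct analysis.

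A smaller imprecision occurs in term (B): the frequency support of $\nabla \pi_{\leq N} g$ does not force $L \sim L'$. For $L \approx N$ the product $\pi_L u\, \pi_{L'} g$ reaches frequencies $\leq 3N/2$ for \emph{every} $L' \lesssim N$. These extra terms are harmless --- they carry $\norm{\pi_L u}_{L^\infty} \lesssim N^{d/2 - \sigma} \norm{u}_{\Hbf}$, and summing $\sum_{L' \leq N} M(L')^{-1} \lesssim \log N$ still closes using $\sigma > \tfrac{d}{2}+3$ together with $M(N) \lesssim N$ --- but as written the case analysis is incomplete.
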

\begin{proof}
By incompressibility of $u$ and Plancherel's identity
\begin{align*}
  \brak{\pi_{\leq N}g, \pi_{\leq N}\grad \cdot (u g)} & \\& \hspace{-2cm}
  = \brak{\pi_{\leq N}g, \pi_{\leq N}\grad \cdot (u g) - \grad \cdot (u \pi_{\leq N} g)} & \\& \hspace{-2cm}
  = \sum_{k,\ell \in \Z^d_\ast} \left(\mathbf{1}_{\abs{\ell} < 2\abs{k-\ell}} + \mathbf{1}_{\abs{\ell} \geq 2\abs{k-\ell}}\right)\zeta_{N}(k)\overline{\hat{g}_t}(k) \left(\zeta_{N}(k) - \zeta_{N}(\ell) \right) ik \cdot \hat{u}_t(k-\ell) \hat{g}_{t}(\ell) \\
&\hspace{-2cm} =: \mathcal{I}_{HL} + \mathcal{I}_{LH}. 
\end{align*}
Note that due to the presence of the cutoffs $\left(\zeta_{N}(k) - \zeta_{N}(\ell) \right)$ we see that one of $\abs{k}$ or $\abs{\ell}$ must be larger or equal to $\frac{1}{2}N$ for the corresponding term in the summation to be non-zero.
The ``high-low'' term is treated by noting that on the support of the summation, $\abs{k} + \abs{\ell} \lesssim \abs{k-\ell}$, and therefore for any $\delta > 0$,
\begin{align}
\abs{\mathcal{I}_{HL}} & \lesssim \frac{1}{N^2}\sum_{k,\ell \in \Z^d_\ast} \mathbf{1}_{\abs{\ell} < 2\abs{k-\ell}} \abs{\zeta_{N}(k)\hat{g}_t(k) \brak{k-\ell}^3\hat{u}_t(k-\ell) \hat{g}_{t}(\ell)} \\
& \lesssim \frac{1}{N^2}\sum_{k,\ell \in \Z^d_\ast} \mathbf{1}_{\abs{\ell} < 2\abs{k-\ell}} \frac{1}{\brak{k}^{\delta} \brak{\ell}^{\delta}}\abs{\zeta_{N}(k) \hat{g}_t(k) \brak{k-\ell}^{3+2\delta} \hat{u}_t(k-\ell) \hat{g}_{t}(\ell)}. 
\end{align}
Then, using Cauchy-Schwarz, Young's inequality, and  $H^{r} \hookrightarrow \widehat{L^1}$ for $r > d/2$, (recall $\sigma > 3+ \frac{d}{2}$), 
\begin{align}
  \abs{\mathcal{I}_{HL}} & \lesssim_\delta \frac{1}{N^2} \norm{g}_{H^{-\delta}}^2 \norm{u}_{\Hbf} \lesssim \frac{1}{M(N)^2} \norm{g}_{B_{2,\infty}^M}^2 \norm{u}_{\Hbf},
\end{align}
where the last inequality followed from $M(N) \lesssim N$ by Definition \ref{def:gen-mult} and the embedding $H^{-\delta} \hookrightarrow B_{2,\infty}^0$. 

We turn next to the ``low-high'' term. 
First, by the mean value theorem there holds, 
\begin{align}
\abs{\zeta_{N}(k) - \zeta_{N}(\ell)} \lesssim \frac{1}{N}\abs{k-\ell}. \label{ineq:MVTpsi}
\end{align} 
Second, observe that on the support of the summation, we have $N \approx \abs{k} \approx \abs{\ell}$ because of the frequency cut-off and
that at least one of $\abs{k}$ or $\abs{\ell}$ must be larger or equal to $\frac{1}{2}N$ but one of $\abs{k}$ and $\abs{\ell}$ must also be less than $3N$. 
Therefore, we deduce from part (ii) of Definition \ref{def:gen-mult} and \eqref{ineq:MVTpsi},
\begin{align}
\abs{\mathcal{I}_{LH}}&  \lesssim \frac{1}{M(N)^2}\sum_{k,\ell \in \Z^d_\ast} \mathbf{1}_{\abs{\ell} \geq 2\abs{k-\ell}}\abs{\zeta_{N}(k)M(|k|)\hat{g}(k) \brak{k-\ell}\hat{u}(k-\ell) M(|\ell|)\hat{g}(\ell)}. 
\end{align}
Again by using Cauchy-Schwarz, Young's inequality, and $H^{r} \hookrightarrow \widehat{L^1}$ for $r > d/2$, 
\begin{align}
\abs{\mathcal{I}_{LH}} & \lesssim \frac{1}{M(N)^2} \norm{g}_{B_{2,\infty}^M}^2 \norm{u}_{\Hbf}. 
\end{align}
This completes the proof. 
\end{proof} 

\begin{proof}[\textbf{Proof of Theorem \ref{thm:OcritB2inf}}]
We proceed by contradiction and assume $\EE_{\mu^0} \norm{g}_{B_{2,\infty}^M}^p <\infty$ for some $p > 2$. 
By repeating the proof of the flux balance \eqref{eq:AD212} as in Section \ref{subsec:Non-vanishing-flux} with $\pi_{\leq N}$ replacing $\Pi_{\leq N}$ we have
\begin{align}
\E_{\mu^0}\brak{\pi_{\leq N}g, \pi_{\leq N}\grad \cdot (u g)}  =  \frac{1}{2}\norm{\pi_{\leq N} b}_{L^2}^2. \label{eq:piNbal}
\end{align}
However Lemma \ref{lem:FluxFan} and the assumption that $\EE_{\mu^0} \norm{g}_{B_{2,\infty}^M}^p <\infty$ for some $p > 2$ implies that
\begin{align}
\abs{\E_{\mu^0}\brak{\pi_{\leq N}g, \pi_{\leq N}\grad \cdot (u g)}} & \lesssim \frac{1}{M(N)^2}\E_{\mu^0} \left[\norm{g}_{B_{2,\infty}^M}^2\norm{u}_{\Hbf}\right]\\
& \lesssim \frac{1}{M(N)^2}\left(\EE_{\mu^0} \norm{g}_{B_{2,\infty}^M}^{p}\right)^{2/p} \left(\EE \norm{u}_{\Hbf}^{\frac{p}{p-2}}\right)^{\frac{p-2}{p}} \\
& \lesssim \frac{1}{M(N)^2}.
\end{align}
On the other hand, $\lim_{N \to \infty} \frac{1}{2}\norm{\pi_{\leq N} b}_{L^2}^2 = \chi$ and so by choosing $N$ sufficiently large in \eqref{eq:piNbal} gives the desired contradiction.  
\end{proof}

\bibliographystyle{abbrv}
\bibliography{bibliography}

\end{document}